\setlist[itemize,1]{leftmargin=.4in}
\setlist[enumerate,1]{leftmargin=.4in,label=(\alph*)}
\setlist[description,1]{leftmargin=.4in,font=\normalfont\itshape}
\DeclareSymbolFont{sansops}{OT1}{\sfdefault}{m}{n}
\renewcommand\operator@font{\mathgroup\symsansops}
\newcommand{\nc}{\newcommand}
\nc{\rnc}{\renewcommand}
\nc{\on}{\operatorname}
\numberwithin{equation}{section}
\theoremstyle{plain}
\newtheorem{lemma}{Lemma}[subsection]
\newtheorem{prop}[lemma]{Proposition}
\newtheorem{theorem}[lemma]{Theorem}
\newtheorem{theoremintro}{Theorem}
\nc{\Prop}{\begin{prop}}
	\nc{\enprop}{\end{prop}}
\nc{\Lemma}{\begin{lemma}}
	\nc{\enlemma}{\end{lemma}}
\nc{\Th}{\begin{theorem}}
	\nc{\enth}{\end{theorem}}
\newtheorem{corollary}[lemma]{Corollary}
\nc{\Cor}{\begin{corollary}}
	\nc{\encor}{\end{corollary}}
\newtheorem{definition}[lemma]{Definition}
\nc{\Def}{\begin{definition}}
	\nc{\edf}{\end{definition}}
\newtheorem{sublemma}[lemma]{Sublemma}
\nc{\Sublemma}{\begin{sublemma}}
	\nc{\ensub}{\end{sublemma}}
\theoremstyle{definition}
\newtheorem{remark}[lemma]{Remark}
\newtheorem{remarks}[lemma]{Remarks}
\newtheorem{example}[lemma]{Example}
\newtheorem{Convention}[lemma]{Convention}
\nc{\Conv}{\begin{Convention}}
	\nc{\enconv}{\end{Convention}}
\nc{\Rem}{\begin{remark}}
	\nc{\enrem}{\end{remark}}
\nc{\rmkend}{\hfill$\triangledown$}
\nc{\defend}{\hfill$\triangle$}
\nc{\be}{\begin{enumerate}}
	\nc{\ee}{\end{enumerate}}
\nc{\eq}{\begin{eqnarray}}
	\nc{\eneq}{\end{eqnarray}}
\nc{\bc}{\begin{cases}}
	\nc{\ec}{\end{cases}}
\nc{\eqn}{\begin{eqnarray*}}
	\nc{\eneqn}{\end{eqnarray*}}
\nc{\ba}{\begin{array}}
	\nc{\ea}{\end{array}}
\rnc{\eq}[1]{\begin{equation} #1 \end{equation}}
\nc{\arxiv}[1]{\href{http://arxiv.org/abs/#1}{\tt arXiv:\nolinkurl{#1}}}
\nc{\ourcomment}[1]{}
\nc{\andreacomment}[1]{{\color{magenta}{//Andrea: #1//}}}
\nc{\bartcomment}[1]{{\color{blue}{//Bart: #1//}}}
\nc{\Omit}[1]{}
\nc{\summary}[1]{}
\nc{\tm}[1]{{\color{magenta}{#1}}}
\nc {\ie}{{\emph{i.e.}}, } 
\nc {\eg}{{\emph{e.g.}}, } 
\nc {\cf}{{\emph{cf.}} } 
\nc {\loccit}{{\emph{loc. cit. }} } 
\nc {\ul}{\underline}
\nc {\ol}{\overline}
\nc {\wtil}{\widetilde}
\nc {\wh}{\widehat}
\nc {\wb}{\widebar}
\nc{\scs}{\scriptscriptstyle}
\nc{\scsop}{\scriptscriptstyle\operatorname} 
\nc {\aand}{\qquad\mbox{and}\qquad}
\nc{\qu}{\quad}
\nc{\qq}{\qquad}
\nc{\drc}[1]{\delta_{#1}}
\nc{\der}{\partial}
\nc{\sfad}{\mathsf{ad}}
\nc{\ten}{\otimes}
\nc{\cA}{{\mathcal A}}
\nc{\cB}{{\mathcal B}}
\nc{\cC}{{\mathcal C}}
\nc{\cD}{{\mathcal D}}
\nc{\cE}{{\mathcal E}}
\nc{\cF}{{\mathcal F}}
\nc{\cG}{{\mathcal G}}
\nc{\cH}{{\mathcal H}}
\nc{\cI}{{\mathcal I}}
\nc{\cJ}{{\mathcal J}}
\nc{\cK}{{\mathcal K}}
\nc{\cL}{{\mathcal L}}
\nc{\cM}{\mathcal{M}}
\nc{\cN}{{\mathcal N}}
\nc{\cO}{{\mathcal O}}
\nc{\cP}{{\mathcal P}}
\nc{\calQ}{{\mathcal Q}}
\nc{\cR}{{\mathcal R}}
\nc{\cS}{\mathcal{S}}
\nc{\cT}{{\mathcal T}}
\nc{\cU}{\mathcal{U}}
\nc{\cV}{{\mathcal V}}
\nc{\cX}{{\mathcal X}}
\nc{\cY}{\mathcal{Y}}
\nc{\cW}{\mathcal{W}}
\nc{\cZ}{{\mathcal Z}}
\nc{\bbA}{{\mathbb{A}}}
\nc{\bbB}{{\mathbb{B}}}
\nc{\bbC}{{\mathbb{C}}}
\nc{\bbD}{{\mathbb{D}}}
\nc{\bbE}{{\mathbb{E}}}
\nc{\bbF}{{\mathbb{F}}}
\nc{\bbG}{{\mathbb{G}}}
\nc{\bbH}{{\mathbb{H}}}
\nc{\bbI}{{\mathbb{I}}}
\nc{\bbJ}{{\mathbb{J}}}
\nc{\bbK}{{\mathbb{K}}}
\nc{\bbL}{{\mathbb{L}}}
\nc{\bbM}{{\mathbb{M}}}
\nc{\bbN}{{\mathbb{N}}}
\nc{\bbO}{{\mathbb{O}}}
\nc{\bbP}{{\mathbb{P}}}
\nc{\bbQ}{{\mathbb{Q}}}
\nc{\bbR}{{\mathbb{R}}}
\nc{\bbS}{{\mathbb{S}}}
\nc{\bbT}{{\mathbb{T}}}
\nc{\bbU}{{\mathbb{U}}}
\nc{\bbV}{{\mathbb{V}}}
\nc{\bbX}{{\mathbb{X}}}
\nc{\bbY}{{\mathbb{Y}}}
\nc{\bbW}{{\mathbb{W}}}
\nc{\bbZ}{{\mathbb{Z}}}
\nc{\scrA}{{\mathscr A}}
\nc{\scrB}{{\mathscr B}}
\nc{\scrC}{{\mathscr C}}
\nc{\scrD}{{\mathscr D}}
\nc{\scrE}{{\mathscr E}}
\nc{\scrF}{{\mathscr F}}
\nc{\scrG}{{\mathscr G}}
\nc{\scrH}{{\mathscr H}}
\nc{\scrI}{{\mathscr I}}
\nc{\scrJ}{{\mathscr J}}
\nc{\scrK}{{\mathscr K}}
\nc{\scrL}{{\mathscr L}}
\nc{\scrM}{{\mathscr M}}
\nc{\scrN}{{\mathscr N}}
\nc{\scrO}{{\mathscr O}}
\nc{\scrP}{{\mathscr P}}
\nc{\scrQ}{{\mathscr Q}}
\nc{\scrR}{{\mathscr R}}
\nc{\scrS}{{\mathscr S}}
\nc{\scrT}{{\mathscr T}}
\nc{\scrU}{{\mathscr U}}
\nc{\scrV}{{\mathscr V}}
\nc{\scrX}{{\mathscr X}}
\nc{\scrY}{{\mathscr Y}}
\nc{\scrW}{{\mathscr W}}
\nc{\scrZ}{{\mathscr Z}}
\nc{\sfA}{{\mathsf A}}
\nc{\sfB}{{\mathsf B}}
\nc{\sfC}{{\mathsf C}}
\nc{\sfD}{{\mathsf D}}
\nc{\sfE}{{\mathsf E}}
\nc{\sfF}{{\mathsf F}}
\nc{\sfG}{{\mathsf G}}
\nc{\sfH}{{\mathsf H}}
\nc{\sfI}{{\mathsf I}}
\nc{\sfJ}{{\mathsf J}}
\nc{\sfK}{{\mathsf K}}
\nc{\sfL}{{\mathsf L}}
\nc{\sfM}{{\mathsf M}}
\nc{\sfN}{{\mathsf N}}
\nc{\sfO}{{\mathsf O}}
\nc{\sfP}{{\mathsf P}}
\nc{\sfQ}{{\mathsf Q}}
\nc{\sfR}{{\mathsf R}}
\nc{\sfS}{{\mathsf S}}
\nc{\sfT}{{\mathsf T}}
\nc{\sfU}{{\mathsf U}}
\nc{\sfV}{{\mathsf V}}
\nc{\sfX}{{\mathsf X}}
\nc{\sfY}{{\mathsf Y}}
\nc{\sfW}{{\mathsf W}}
\nc{\sfZ}{{\mathsf Z}}
\nc{\sfa}{{\mathsf a}}
\nc{\sfb}{{\mathsf b}}
\nc{\sfc}{{\mathsf c}}
\nc{\sfd}{{\mathsf d}}
\nc{\sfe}{{\mathsf e}}
\nc{\sff}{{\mathsf f}}
\nc{\sfg}{{\mathsf g}}
\nc{\sfh}{{\mathsf h}}
\nc{\sfi}{{\mathsf i}}
\nc{\sfj}{{\mathsf j}}
\nc{\sfk}{{\mathsf k}}
\nc{\sfl}{{\mathsf l}}
\nc{\sfm}{{\mathsf m}}
\nc{\sfn}{{\mathsf n}}
\nc{\sfo}{{\mathsf o}}
\nc{\sfp}{{\mathsf p}}
\nc{\sfq}{{\mathsf q}}
\nc{\sfr}{{\mathsf r}}
\nc{\sfs}{{\mathsf s}}
\nc{\sft}{{\mathsf t}}
\nc{\sfu}{{\mathsf u}}
\nc{\sfv}{{\mathsf v}}
\nc{\sfx}{{\mathsf x}}
\nc{\sfy}{{\mathsf y}}
\nc{\sfw}{{\mathsf w}}
\nc{\sfz}{{\mathsf z}}
\nc {\bfA}{{\mathbf A}}
\nc {\bfB}{{\mathbf B}}
\nc {\bfC}{{\mathbf C}}
\nc {\bfD}{{\mathbf D}}
\nc {\bfE}{{\mathbf E}}
\nc {\bfF}{{\mathbf F}}
\nc {\bfG}{{\mathbf G}}
\nc {\bfH}{{\mathbf H}}
\nc {\bfI}{{\mathbf I}}
\nc {\bfJ}{{\mathbf J}}
\nc {\bfK}{{\mathbf K}}
\nc {\bfL}{{\mathbf L}}
\nc {\bfM}{{\mathbf M}}
\nc {\bfN}{{\mathbf N}}
\nc{\bfO}{{\mathbf O}}
\nc {\bfP}{{\mathbf P}}
\nc {\bfQ}{{\mathbf Q}}
\nc {\bfR}{{\mathbf R}}
\nc {\bfS}{{\mathbf S}}
\nc {\bfT}{{\mathbf T}}
\nc {\bfU}{{\mathbf U}}
\nc {\bfV}{{\mathbf V}}
\nc {\bfX}{{\mathbf X}}
\nc {\bfY}{{\mathbf Y}}
\nc {\bfW}{{\mathbf W}}
\nc {\bfZ}{{\mathbf Z}}
\nc {\fka}{{\mathfrak a}}
\nc {\fkb}{{\mathfrak b}}
\nc {\fkc}{{\mathfrak c}}
\nc {\fkd}{{\mathfrak d}}
\nc {\fke}{{\mathfrak e}}
\nc {\fkf}{{\mathfrak f}}
\nc {\fkg}{{\mathfrak g}}
\nc {\fkh}{{\mathfrak h}}
\nc {\fki}{{\mathfrak i}}
\nc {\fkj}{{\mathfrak j}}
\nc {\fkk}{{\mathfrak k}}
\nc {\fkl}{{\mathfrak l}}
\nc {\fkm}{{\mathfrak m}}
\nc {\fkn}{{\mathfrak n}}
\nc {\fko}{{\mathfrak o}}
\nc {\fkp}{{\mathfrak p}}
\nc {\fkq}{{\mathfrak q}}
\nc {\fkr}{{\mathfrak r}}
\nc {\fks}{{\mathfrak s}}
\nc {\fkt}{{\mathfrak t}}
\nc {\fku}{{\mathfrak u}}
\nc {\fkv}{{\mathfrak v}}
\nc {\fkx}{{\mathfrak x}}
\nc {\fky}{{\mathfrak y}}
\nc {\fkw}{{\mathfrak w}}
\nc {\fkz}{{\mathfrak z}}
\nc {\fkA}{{\mathfrak A}}
\nc {\fkB}{{\mathfrak B}}
\nc {\fkC}{{\mathfrak C}}
\nc {\fkD}{{\mathfrak D}}
\nc {\fkE}{{\mathfrak E}}
\nc {\fkF}{{\mathfrak F}}
\nc {\fkG}{{\mathfrak G}}
\nc {\fkH}{{\mathfrak H}}
\nc {\fkI}{{\mathfrak I}}
\nc {\fkJ}{{\mathfrak J}}
\nc {\fkK}{{\mathfrak K}}
\nc {\fkL}{{\mathfrak L}}
\nc {\fkM}{{\mathfrak M}}
\nc {\fkN}{{\mathfrak N}}
\nc {\fkO}{{\mathfrak O}}
\nc {\fkP}{{\mathfrak P}}
\nc {\fkQ}{{\mathfrak Q}}
\nc {\fkR}{{\mathfrak R}}
\nc {\fkS}{{\mathfrak S}}
\nc {\fkT}{{\mathfrak T}}
\nc {\fkU}{{\mathfrak U}}
\nc {\fkV}{{\mathfrak V}}
\nc {\fkX}{{\mathfrak X}}
\nc {\fkY}{{\mathfrak Y}}
\nc {\fkW}{{\mathfrak W}}
\nc {\fkZ}{{\mathfrak Z}}
\rnc{\a}{\fka}
\rnc{\b}{\fkb}
\rnc{\c}{\fkc}
\rnc{\d}{\fkd}
\nc{\e}{\fke}
\nc{\f}{\fkf}
\nc{\g}{\fkg}
\nc{\h}{\fkh}
\rnc{\i}{\fki}
\rnc{\j}{\fkj}
\rnc{\k}{\fkk}
\rnc{\l}{\fkl}
\nc{\m}{\fkm}
\nc{\n}{\fkn}
\rnc{\o}{\fko}
\nc{\p}{\fkp}
\nc{\q}{\fkq}
\rnc{\r}{\fkr}
\nc{\s}{\fks}
\rnc{\t}{\fkt}
\rnc{\u}{\fku}
\rnc{\v}{\fkv}
\nc{\x}{\fkx}
\nc{\y}{\fky}
\nc{\w}{\fkw}
\nc{\z}{\fkz}
\nc{\C}{{\mathbb C}}
\nc{\Q}{\mathbb {Q}}
\nc{\Z}{{\mathbb Z}}
\nc{\N}{{\mathbb N}} 
\nc{\Ad}{\operatorname{Ad}}
\nc{\sym}{\mathfrak{S}} 
\nc{\weyl}{\mathfrak{W}}
\nc{\Tor}{\operatorname{Tor}}
\nc{\Hom}{\operatorname{Hom}}
\nc{\End}{\operatorname{End}}
\nc{\Aut}{\operatorname{Aut}}
\nc{\Ext}{\operatorname{Ext}}
\nc{\Coker}{{\operatorname{Coker}}}
\nc{\coker}{{\operatorname{coker}}}
\nc{\Ker}{{\operatorname{Ker}}}
\nc{\id}{{\operatorname{id}}}
\nc{\Id}{\operatorname{Id}}
\nc{\aff}{{\sf aff}}
\nc{\aA}{\wh{A}}
\nc{\rank}{{\operatorname{rank}}}
\nc{\Rep}{{\operatorname{Rep}}}
\nc{\Repfd}{{\operatorname{Rep}_{\sf fd}}}
\nc{\Mod}{{\operatorname{Mod}}}
\nc{\Modfd}{{\operatorname{Mod}_{\sf fd}}}
\nc{\al}{\alpha}
\nc{\ga}{\gamma}
\nc{\del}{\delta}
\nc{\eps}{\epsilon}
\nc{\veps}{\varepsilon}
\nc{\ze}{\zeta}
\nc{\ka}{\kappa}
\nc{\la}{\lambda}
\nc{\si}{\sigma}
\nc{\om}{\omega}
\nc{\Del}{\Delta}
\nc{\ext}{\mathsf{ext}}
\nc{\fksl}{\mathfrak{sl}}
\nc{\fkgl}{\mathfrak{gl}}
\nc{\bsF}{\bbF} 
\nc{\Ons}{\mathbf{O}_q}
\nc{\aOns}{\mathbf{O}_q^{\mathsf{a}}}
\nc{\bOns}{\mathbf{O}_q^{\mathsf{b}}}
\nc{\km}{\mathbf{k}}
\nc{\qkm}{\mathfrak{k}}
\nc{\texp}[1]{\wt{s}_{#1}} 
\nc{\Lus}[1]{T_{#1}} 
\nc{\mLus}[1]{\mathbf{T}_{#1}} 
\rnc{\aff}[1]{\wh{#1}}
\nc{\de}[1]{\epsilon_{#1}} 
\nc{\fIS}{I} 
\nc{\aIS}{\aff{I}} 
\nc{\rt}[1]{\alpha_{#1}} 
\nc{\cort}[1]{h_{#1}}
\nc{\fwt}[1]{\varpi_{#1}}
\nc{\fcwt}[1]{\Lambda_{#1}^\vee}
\nc{\rootsys}{\Delta}
\nc{\hrt}{\vartheta} 
\nc{\cp}[2]{#1(#2)}
\nc{\iip}[2]{\left( #1 , #2\right)} 
\nc{\drv}[1]{\delta_{#1}} 
\nc{\codrv}[1]{d_{#1}} 
\nc{\bsfld}{\mathbb{K}} 
\nc{\central}[1]{c_{#1}}
\nc{\Qlat}{\mathsf{Q}}
\nc{\Qpm}{\Qlat_\pm}
\nc{\Qp}{\Qlat_+}
\nc{\Qm}{\Qlat_-}
\nc{\Qlatv}{\Qlat^\vee}
\nc{\Qvpm}{\Qlat^\vee_{\pm}}
\nc{\Qvp}{\Qlat^\vee_+}
\nc{\Qvm}{\Qlat^\vee_-}
\nc{\aQ}{\aff{\Qlat}}
\nc{\aQv}{\aff{\Qlat}^{\vee}}
\nc{\aQp}{\aff{\Qlat}_+}
\nc{\aQextp}{\aff{\Qlat}_{{\sf ext},+}}
\nc{\aQvp}{\aff{\Qlat}^{\vee}_+}
\nc{\aQvext}{\wt{\Qlat}^{\vee}} 
\nc{\aQvextt}{\aff{\Qlat}^{\vee}_{{\sf ext,\tau}}}
\nc{\aQvextp}{\aff{\Qlat}^{\vee}_{{\sf ext},+}}
\nc{\Plat}{\mathsf{P}}
\nc{\Platv}{\mathsf{P}^\vee}
\nc{\Ppm}{\mathsf{P}_\pm}
\nc{\Pp}{\mathsf{P}_+}
\nc{\Pm}{\mathsf{P}_-}
\nc{\Pvpm}{\mathsf{P}^\vee_\pm}
\nc{\Pvp}{\mathsf{P}^\vee_+}
\nc{\Pvm}{\mathsf{P}^\vee_-}
\nc{\aP}{\wh{\Plat}}
\nc{\aPext}{\wh{\Plat}_{{\sf ext}}}
\nc{\aPpm}{\aP_{\pm}}
\nc{\aPextpm}{\aP_{{\sf ext},\pm}}
\nc{\aPd}{{\aP/\bbZ\drv{}}} 
\nc{\PZ}{{\Plat_{\Z}}}
\nc{\PvZ}{{\Platv_{\Z}}}
\nc{\cl}{\mathsf{cl}}
\nc{\Pcl}{\Plat_{\cl}}
\nc{\Pvcl}{\Plat^{\vee}_{\cl}}
\nc{\Pclz}{\Plat_{\cl,0}}
\nc{\fr}{_\mathsf{fr}}
\nc{\Qfr}{\Qlat\fr}
\nc{\Pfr}{\Plat\fr}
\nc{\Qvfr}{\Qlatv\fr}
\nc{\Pvfr}{\Platv\fr}
\nc{\wgt}[1]{\operatorname{wt}(#1)}
\nc{\gKM}{\g_{\scsop{KM}}}
\nc{\Lg}{L\g} 
\nc{\ag}{\wt{\g}} 
\nc{\agp}{\wh{\g}} 
\nc{\ah}{\wt{\h}} 
\nc{\ahp}{\wh{\h}} 
\nc{\anp}{\wh{\n}^+}
\nc{\anm}{\wh{\n}^-}
\nc{\UqgKM}{U_q(\g_{\scsop{KM}})}
\nc{\Uqg}{U_q(\g)}
\nc{\Uqgp}{U^\prime_q(\g)}
\nc{\Uqb}{U_q{(\b)}}
\nc{\Uqbp}{U_q({\b}^+)}
\nc{\Uqbm}{U_q({\b}^-)}
\nc{\Uqbpm}{U_q({\b}^{\pm})}
\nc{\Uqn}{U_q({\n})}
\nc{\Uqnm}{U_q({\n}^-)}
\nc{\Uqnp}{U_q({\n}^+)}
\nc{\Uqnpm}{U_q({\n}^{\pm})}
\nc{\Uqag}{U_q(\wt{\g})}
\nc{\Uqagp}{U_q(\wh{\g})}
\nc{\Uqan}{U_q(\wh{\n})}
\nc{\Uqanp}{U_q(\wh{\n}^+)}
\nc{\Uqanm}{U_q(\wh{\n}^-)}
\nc{\Uqanpm}{U_q(\wh{\n}^{\pm})}
\nc{\Uqah}{U_q(\wt{\h})}
\nc{\Uqahp}{U_q(\wh{\h})}
\nc{\Uqab}{U_q(\wt{\b})}
\nc{\Uqabp}{U_q(\wt{\b}^+)}
\nc{\Uqabm}{U_q(\wt{\b}^-)}
\nc{\Uqabpm}{U_q(\wt{\b}^{\pm})}
\nc{\Uqabpp}{U_q(\wh{\b}^+)}
\nc{\UqLg}{U_q(\Lg)}
\nc{\CUqag}[1]{\End(\mathsf{f}_{\cO}^{#1})}
\nc{\CUqagint}[1]{\End(\mathsf{f}_{\cO^{\sf int}}^{#1})}
\nc{\CUqLg}[1]{\End(\mathsf{f}_{\cC}^{#1})}
\nc{\QL}[1]{U_q(L{#1})}
\nc{\Kg}[1]{K_{#1}}
\nc{\Kgpm}[1]{K_{#1}^{\pm}}
\nc{\Kgp}[1]{K_{#1}^+}
\nc{\Kgm}[1]{K_{#1}^-}
\nc{\Eg}[1]{E_{#1}}
\nc{\Fg}[1]{F_{#1}}
\nc{\egp}[1]{e^{+}_{#1}}
\nc{\egm}[1]{e^{-}_{#1}}
\nc{\egpm}[1]{e^{\pm}_{#1}}
\nc{\egmp}[1]{e^{\mp}_{#1}}
\nc{\Ce}{\cC}
\nc{\xpm}[1]{x^{\pm}_{#1}}
\nc{\xmp}[1]{x^{\mp}_{#1}}
\nc{\xp}[1]{x^{+}_{#1}}
\nc{\xm}[1]{x^{-}_{#1}}
\nc{\xz}[1]{\xi_{#1}}
\nc{\axzp}[1]{\Psi^{+}_{#1}}
\nc{\axzm}[1]{\Psi^{-}_{#1}}
\nc{\axzpm}[1]{\Psi^{\pm}_{#1}}
\nc{\axze}[1]{\Psi^{\varepsilon}_{#1}}
\nc{\phipm}[1]{\phi^{\pm}_{#1}}
\nc{\phip}[1]{\phi^{+}_{#1}}
\nc{\phim}[1]{\phi^{-}_{#1}}
\nc{\QLp}[1]{U_q(L{#1})^+}
\nc{\QLm}[1]{U_q(L{#1})^-}
\nc{\QLpm}[1]{U_q(L{#1})^\pm}
\nc{\QLz}[1]{U_q^c(L{#1})}
\nc{\chev}{\omega}
\nc{\weightspace}[2]{{{#1}_{#2}}}
\nc{\wsp}[2]{\weightspace{#1}{#2}}
\nc{\wts}[1]{\mathsf{wt}(#1)}
\nc{\hwL}[1]{L(#1)}
\nc{\catO}[3]{\O_{#1}^{#2}{#3}}
\nc{\ev}[1]{\mathsf{ev}_{#1}}
\nc{\qstr}[2]{\Sigma_{#1,#2}}
\nc{\evrep}[2]{V_{#1}(#2)}
\nc{\shrep}[2]{{#1}(#2)}
\nc{\Lshrep}[2]{\Lfml{#1}{#2}}
\nc{\Pshrep}[2]{\Pfml{#1}{#2}}
\nc{\qstrep}[2]{V(\qstr{#1}{#2})}
\nc{\qsrep}[1]{V(#1)}
\nc{\HL}[1]{\mathcal{C}_{#1}}
\nc{\Oinf}{{\cO_\infty}}
\nc{\Oint}{{\cO_\infty^{\sf int}}}
\nc{\ModfdUqLg}{\mathcal{C}} 
\nc{\FF}[2]{\mathsf{f}_{#1}^{#2}}
\nc{\shift}[1]{\Sigma_{#1}}
\nc{\tshift}[1]{\Sigma^{\tau}_{#1}}
\nc{\shiftm}[2]{{#1}_{#2}}
\nc{\shifta}[1]{{\chi}_{#1}}
\nc{\Deltaop}{\Delta^{\sf op}}
\nc{\fml}[2]{{#1}[\negthinspace[#2]\negthinspace]} 
\nc{\Lfml}[2]{{#1}(\negthinspace(#2)\negthinspace)} 
\nc{\Pfml}[2]{{#1}\{#2\}} 
\nc{\qsl}[1]{U_q({\mathfrak{sl}}_{#1})}
\nc{\qasl}[1]{U_q(\wh{\mathfrak{sl}}_{#1})}
\nc{\qlsl}[1]{U_q(L\mathfrak{sl}_{#1})}
\nc{\UqL}[1]{U_q(L{#1})}
\nc{\brS}[1]{S_{#1}} 
\nc{\brSg}[1]{\widetilde{s}_{#1}} 
\nc{\Br}[1]{\mathscr{B}_{#1}} 
\nc{\qWS}[1]{S_{#1}}
\nc{\LT}[1]{T_{#1}}
\nc{\tcorr}[1]{\bm{\xi}_{#1}} 
\nc{\bt}[1]{\mathcal{S}_{#1}} 
\nc{\adt}[1]{\mathcal{T}_{#1}} 
\nc{\adbt}[1]{\mathcal{T}_{#1}} 
\nc{\Rcorr}[1]{\eta_{#1}}
\nc{\intg}{\mathbf{W}^{\mathsf{int}}}
\nc{\Vect}[1]{\operatorname{Vect}_{#1}}
\nc{\corank}{\operatorname{corank}}
\nc{\gsat}[1]{\mathsf{GSat}(#1)} 
\nc{\auxgsat}[2]{\mathsf{Sat}(#1;#2)} 
\nc{\sat}[1]{{\mathbf{S}}} 
\nc{\tsat}{\theta} 
\nc{\zsat}{\zeta} 
\nc{\tsatq}{\tsat_{q}} 
\nc{\zsatq}{\zsat_{q}} 
\nc{\tinv}[1]{\theta_{#1}}
\nc{\oi}{\mathsf{oi}} 
\nc{\Parsetc}{\bm{\Gamma}} 
\nc{\Parsets}{\bm{\Sigma}}
\nc{\Parc}{\bm{\gamma}} 
\nc{\Pars}{\bm{\sigma}}
\nc{\parc}[1]{\bm{\gamma}_{#1}} 
\nc{\pars}[1]{\bm{\sigma}_{#1}}
\nc{\ctheta}{\theta} 
\nc{\qtheta}{\theta_q} 
\nc{\qthetat}{\widetilde{\theta}_q} 
\nc{\Uqk}{U_q(\mathfrak{k})}
\nc{\Uqh}{U_q(\mathfrak{h})}
\nc{\wt}{\widetilde}
\nc{\Ieq}{\bfI_{\sf eq}} 
\nc{\Idiff}{\bfI_{\sf diff}} 
\nc{\Ins}{\bfI_{\sf ns}} 
\nc{\aIeq}{\aIS_{\sf eq}} 
\nc{\aIdiff}{\aIS_{\sf diff}} 
\nc{\aIns}{\aIS_{\sf ns}} 
\nc{\bg}[1]{b_{#1}} 
\nc{\Bg}[1]{B_{#1}} 
\nc{\QK}[1]{\Upsilon_{#1}} 
\nc{\RM}[1]{{R}_{#1}} 
\nc{\QR}[1]{{\Xi}_{#1}} 
\nc{\sRM}[2]{{{R}}_{#1}(#2)} 
\nc{\sRMv}[2]{{\widecheck{R}}_{#1}(#2)} 
\nc{\rRM}[2]{{\mathbf{R}}_{#1}(#2)} 
\nc{\rRMC}[3]{{\mathbf{R}}^{#1}_{#2}(#3)} 
\nc{\rRMv}[2]{\widecheck{\mathbf{R}}_{#1}(#2)} 
\nc{\rRMt}[2]{\widetilde{\mathbf{R}}_{#1}(#2)} 
\nc{\KM}[1]{K_{#1}} 
\nc{\sKM}[2]{K_{#1}(#2)} 
\nc{\rKM}[2]{\mathbf{K}_{#1}(#2)} 
\nc{\trKM}[2]{\wt{\mathbf{K}}_{#1}(#2)} 
\nc{\zeroKM}[2]{K^0_{#1}(#2)} 
\nc{\inftyKM}[2]{K^{\infty}_{#1}(#2)} 
\nc{\tsRM}[2]{{{R}}^{\tau}_{#1}(#2)} 
\nc{\tsRMv}[2]{{\widecheck{R}}^{\tau}_{#1}(#2)} 
\nc{\auxsat}[1]{\bfT} 
\nc{\hext}[2]{{#1}[\negthinspace[#2]\negthinspace]} 
\nc{\binomb}[2]{\genfrac{[}{]}{0pt}{}{#1}{#2}}
\nc{\GQSP}{\cG_{\tsat,\parc{}}}
\nc{\coloneqq}{=}
\nc{\Supp}{\mathsf{Supp}}
\nc{\twistrep}[2]{{#1}^{#2}}  
\nc{\wtbeta}{\wt{\beta}}
\nc{\Gg}{\cG} 
\nc{\gau}{{\bf g}} 
\nc{\nA}{\mathcal{A}} 
   \def\MR#1{}
\title[Trigonometric K-matrices]{Trigonometric K-matrices for finite-dimensional representations of quantum affine algebras}
\author[A. Appel]{Andrea Appel} 
\address{Dipartimento di Scienze Matematiche, 
	Fisiche e Informatiche, 
	INdAM GNSAGA and INFN Gruppo Collegato di Parma, 
	Universit\`a di Parma, 
	Parco Area delle Scienze 53/A, 
	43124 Parma (PR), Italy}
\email{\href{mailto:andrea.appel@unipr.it}{andrea.appel@unipr.it}}
\author[B. Vlaar]{Bart Vlaar}
\address{
Beijing Institute of Mathematical Sciences and Applications, No. 544, Hefangkou Village, Huaibei Town, Huairou, Beijing, China, and Max Planck Institute for Mathematics, Vivatsgasse 7, 53111 Bonn, Germany}
\email{\href{b.vlaar@bimsa.cn}{b.vlaar@bimsa.cn}}
\thanks{The first author is partially supported by an INdAM Project 2024 and the PRIN grant 2022HMTBLL}
\keywords{Reflection equation; quantum affine algebras; quantum symmetric pairs}
\subjclass[2020]{
	Primary: 81R50. 
	Secondary: 16T25, 
	 17B37, 
	81R12. 
}
\begin{document}
	

\begin{abstract}
Let $\g$ be a complex simple finite-dimensional Lie algebra and $\Uqagp$ the corresponding quantum affine algebra. 
We prove that every irreducible finite-dimensional $\Uqagp$-module gives rise to a family of trigonometric solutions of Cherednik's generalized reflection equation. 
These depend upon the choice of a quantum affine symmetric pair $\Uqk\subset\Uqagp$.
Our result relies on the construction of universal K-matrices for arbitrary quantum symmetric pairs we obtained in \cite{AV22} as well as the fact that every irreducible $\Uqagp$-module is generically irreducible under restriction to $\Uqk$. 
In the case of small modules and Kirillov-Reshetikhin modules, we obtain new solutions of the standard and the transposed reflection equations.
\end{abstract}


\maketitle

\setcounter{tocdepth}{1}
\tableofcontents


\section{Introduction}

\subsection{} 
The reflection equation was originally introduced by Cherednik in \cite{Che84} as a consistency condition for factorized scattering on the half-line and was pivotal in the seminal works of Sklyanin \cite{Skl88} and Olshanski \cite{Ols90}.
In the more general form introduced in \cite[Sec.~4]{Che92}, it reads
\begin{equation} \label{eq:CRE-intro}
	\begin{aligned}
		& \rRMC{--}{}{\tfrac{w}{z}}_{21} \cdot (\id_V \ten \rKM{}{w}{}) \cdot \rRMC{-+}{}{zw} \cdot (\rKM{}{z} \ten \id_V) =\\
		& \qquad \qquad = \, (\rKM{}{z} \ten \id_V) \cdot \rRMC{-+}{}{zw}_{21} \cdot (\id_V \ten \rKM{}{w}) \cdot \rRMC{++}{}{\tfrac{w}{z}}\,,
	\end{aligned}
\end{equation}
where $\rKM{}{z}$ is an operator on a finite-dimensional vector space $V$, which depends on a parameter $z\in\bbC^\times$, and $\rRMC{--}{}{z}$, $\rRMC{-+}{}{z}$, and $\rRMC{++}{}{z}$ are operators on $V\ten V$, which satisfy certain {\em mixed} Yang-Baxter equations. 
The operator $\rKM{}{z}$ is often referred to as a K-matrix.

The two most common variants of the reflection equation are recovered as special cases. Given a solution $\rRM{}{z}$ of the spectral Yang-Baxter equation on $V$, the choice $\rRMC{\pm\pm}{}{z}=\rRM{}{z}$ produces the \emph{standard} reflection equation from \cite{Che84, Skl88}.
On the other hand, if $\sf t_1$ and $\sf t_2$ denote the transpositions with respect to the corresponding tensor factors, the choice $\rRMC{--}{}{z}=\rRM{}{z}^{\sf t_1\sf t_2}$, $\rRMC{-+}{}{z}=( \rRM{}{z}^{-1})^{\sf t_1}$, and $\rRMC{++}{}{z}=\rRM{}{z}$ yields the \emph{transposed} reflection equation from \cite{Ols90}.\footnote{
	These two forms of the reflection equation are also known as the \emph{untwisted} and \emph{twisted} reflection equation, 
	respectively. In the context of this paper, such terminology may be misleading, thus we opt for ``standard'' and ``transposed''.
}

\subsection{} 
The reflection equation soon proved ubiquitous in quantum integrability and in representation theory, with further advances in \eg \cite{FM91,KS92,MN92,GZ94,LM94,JKKMW,BPO96,FSHY,IK97,MN98}.
In the new millennium, its study has continued to thrive.
It is essential in the study of coideal subalgebras in Yangians and quantum affine algebras \cite{MR02, DG02, MRS03, DM03, BB10, DLMR03, CGM14, GRW17} and connects to affine Hecke and Temperley-Lieb algebras \cite{Doi05, KM06, IO07, DGN08}.  
Within the context of quantum integrable systems with boundaries, it plays a key role in the diagonalization of Hamiltonians \cite{DGP04, FNR07, Koj11}, in the theory of Baxter Q-operators \cite{YNZ06,YZ06,FS15, BT18, VW20}, and in the theory of quantum KZ equations \cite{ZJ07, SV15, RSV18}.
More recently, it appeared in connection with Schubert calculus \cite{HKZJ20}, three-dimensio\-nal integrability \cite{KP18,KOY19} and gauge theory \cite{BS19,BS20}.

\subsection{} 
In this paper, we solve a central problem which has been open since the introduction of the reflection equation: we provide a systematic, universal approach to the construction of \emph{trigonometric} K-matrices for irreducible finite-dimensional modules over quantum affine algebras. 
Moreover, we obtain a characterization of trigonometric K-matrices in terms of twisted intertwiners, confirming the expectations from \cite{DG02,DM03}.

Our approach requires the extension of the construction of trigonometric R-matrices carried out in \cite{Dri86, FR92, KS95} to the setting of  affine quantum symmetric pair (QSP) subalgebras. 
It relies on our recent work \cite{AV22}, where we prove the existence of {\em universal} K-matrices for quantum Kac-Moody algebras, building on results by Bao and Wang \cite{BW18} and Balagovi\'{c} and Kolb \cite{BK19}.
We now describe our results in more detail. 

\subsection{}
The universal K-matrices constructed in \cite{AV22} for the quantum Kac-Moody algebra $\UqgKM$ depend on the following additional data: a QSP subalgebra, \ie a coideal subalgebra $\Uqk\subset\UqgKM$ quantizing a (pseudo-)fixed-point Lie subalgebra with respect to a (pseudo-)involution of the second kind \cite{Kol14, RV21}, and a \emph{twisting operator} $\psi$, \ie an element of a distinguished class of algebra automorphisms of $\UqgKM$ determined in part by $\Uqk$ (see~Section~\ref{ss:univ-kmx}). 
For any such pair $(\Uqk,\psi)$, we constructed an operator $K$ on category $\cO$ integrable $\UqgKM$-modules, which satisfies a \emph{generalized} reflection equation
\begin{equation}\label{eq:universal-REtwisted-intro}
	(R^{\psi \psi})_{21} \cdot (1 \ten K) \cdot R^\psi \cdot (K \ten 1) = (K \ten 1) \cdot (R^\psi)_{21} \cdot (1 \ten K) \cdot R,
\end{equation}
where $R$ is the universal R-matrix of $\UqgKM$ and we have introduced the shorthand notations
\[
R^\psi = (\psi \ten \id)(R), \qq R^{\psi \psi} = (\psi \ten \psi)(R).
\]
Moreover, $K$ satisfies the QSP intertwining equation $K\cdot b=\psi(b)\cdot K$ for all $b\in\Uqk$.\\

All universal K-matrices corresponding to a given QSP subalgebra are built out of the \emph{quasi}-K-matrix, an operator originally introduced in \cite{BW18} as the solution of a certain QSP intertwining equation for $\fkg = \fksl_N$. 
In \cite{AV22} we proved that the quasi-K-matrix is a solution of a reflection equation of the form \eqref{eq:universal-REtwisted-intro} for a distinguished twisting operator (see~Theorem~\ref{thm:av-k-mx}). 
New solutions are obtained by gauging simultaneously the quasi K-matrix and the twisting operator (see Section~\ref{ss:univ-kmx}). 
In particular, we solve \eqref{eq:universal-REtwisted-intro} for a much wider class of twisting operators than \cite{BK19}.
This feature is essential to the purposes of this paper.

\subsection{}\label{ss:spectral-intro}
Let $\g$ be a complex simple Lie algebra and $\Uqagp$ the corresponding quantum affine algebra. 
Tensor products of finite-dimensional irreducible $\Uqagp$-modules 
are naturally equipped with a trigonometric R-matrix, \ie a rational solution of the spectral Yang-Baxter equation (see Section~\ref{ss:R-matrix}).
By relying on the results from \cite{AV22} for $\Uqagp$, we adapt the strategy underlying the construction of trigonometric R-matrices to the case of universal K-matrices.

Fix a QSP subalgebra $\Uqk\subseteq\Uqagp$ and a twisting operator $\psi$.
The corresponding universal K-matrix $K$ acts on integrable category $\cO$ modules, but not on a finite-dimensional $\Uqagp$-module $V$ in general. 
To remedy this, we consider the grading shift automorphism $\shift{z}$ on $\Uqagp$ and the corresponding shifted module $\Lshrep{V}{z}$. 
We then show that the shifted universal K-matrix $\shift{z}(K)$ gives rise to an operator $\sKM{V}{z}$ on $\Lshrep{V}{z}$ and yields the following result.

\begin{theoremintro}\label{thm:spectral-k-intro}
	Let $V$ be a finite-dimensional $\Uqagp$-module and denote by $V^\psi$ the $\psi$-twisted module $\psi^*(V)$.
	The universal K-matrix $K$ gives rise to a $\Uqk$-intertwiner
	\begin{equation}\label{eq:spectral-intw-intro}
		\sKM{V}{z}: \Lshrep{V}{z}\to\Lshrep{V^\psi}{z^{-1}}\,,
	\end{equation}
	which satisfies the generalized reflection equation \eqref{eq:CRE-intro} with respect to the R-matrices $\sRM{V^{\psi}\, V^\psi}{z}$, $\sRM{\twistrep{V}{\psi}V}{z}$ and $\sRM{VV}{z}$.
\end{theoremintro}

More precisely, Theorem~\ref{thm:spectral-k-intro} does not hold for every possible choice of $\psi$.
In contrast with the case of the R-matrix, we prove that, for the corresponding universal K-matrix to act on $\Lshrep{V}{z}$ and produce the intertwiner \eqref{eq:spectral-intw-intro}, the twisting operator $\psi$ must be carefully chosen. 
In particular, it has to satisfy\footnote{
	Note that this condition is not satisfied by the twisting operators considered in \cite{BK19}.
} $\shift{z}\circ\psi=\psi\circ\shift{z^{-1}}$.
In this case, the universal reflection equation \eqref{eq:universal-REtwisted-intro} reduces to the desired generalized reflection equation, see Theorem~\ref{thm:spectral-k} for the full details.

\subsection{}\label{ss:rational-intro-1}
In the case of irreducible modules, we obtain a trigonometric K-matrix as a suitable normalization of $\sKM{}{z}$.

\begin{theoremintro}\label{thm:rational-k-intro}
	Let $V$ be an irreducible finite-dimensional $\Uqagp$-module. Then, there is a scalar-valued formal Laurent series $g_{V}(z)$ and an operator $\rKM{V}{z}\in\End(V)(z)$ such that
		\begin{align}\label{eq:rational-decomp-k-intro}
			\sKM{V}{z}= g_{V}(z)\cdot\rKM{V}{z}\, 
		\end{align}
	and $\rKM{V}{z}$ is a solution of the generalized reflection equation 
	\eqref{eq:CRE-intro} with respect to the trigonometric R-matrices
	$\rRM{V^{\psi}\, V^\psi}{z}$, $\rRM{\twistrep{V}{\psi}V}{z}$ and $\rRM{VV}{z}$.\footnote{
		The full statement is given in Theorem~\ref{thm:rational-k}.
	}
\end{theoremintro}
The proof of Theorem~\ref{thm:rational-k-intro} crucially relies on the particular form
of the twisting operator and the following result of {\em generic irreducibility}.

\begin{theoremintro} \label{cor:QSP:irreducible-intro}
	Every irreducible finite-dimensional $\Uqagp$-module is generically irreducible 
	as a $\Uqk$-module.
\end{theoremintro}

More generally, we prove that the module $\Lshrep{V}{z}$ remains irreducible under restriction to certain subalgebras in $\Uqagp$, which we refer to as \emph{modified} nilpotent subalgebras (see Theorem~\ref{thm:deformedN:irreducible} and Corollary~\ref{cor:QSP:irreducible}).
This result is of independent interest and generalizes results for the standard Borel and nilpotent subalgebras obtained in \cite{CG05, Bow07, HJ12}.

\subsection{}
Our results drastically simplify the approach to the reflection equation developed in \cite{MN98,DG02,DM03}, 
which is a boundary analogue of the construction of trigonometric R-matrices via linear equations \cite{KR83,Jim86b}. 
First, it assumes the existence of a twisted QSP intertwiner on a given finite-dimensional irreducible $\Uqagp$-module. 
Then, in order to deduce the reflection equation, it requires the tensor product 
to be generically irreducible as a $\Uqk$-module. 

Instead, the existence of a twisted QSP intertwiner is guaranteed by Theorem~\ref{thm:rational-k-intro} on any irreducible module, its essential 
uniqueness follows from Theorem~\ref{cor:QSP:irreducible-intro},
and
the reflection equation follows independently of the irreducibility of the tensor product. 

\subsection{}
Under the additional assumption that $V^{\psi^2}= V$, we further prove that the trigonometric K-matrix can be normalized to be {\em unitary}, \ie satisfies $\rKM{V}{z}^{-1}=\rKM{V^\psi}{z^{-1}}$. 
The condition $V^{\psi^2}= V$ is not restrictive, since the twisting operator $\psi$ can always be chosen to be an involution.
Through unitarity, we establish a direct relation between the poles of $\rKM{V}{z}$ and the irreducibility of $V$ under restriction to $\Uqk$ (see Propositions~\ref{prop:unitarity} and \ref{prop:unitarity:case2}). 
Namely, if $\rKM{V}{z}$ has a pole at $z=\zeta$, then the restriction of $V$ shifted at $\zeta$ is reducible.

In contrast with the case of the trigonometric R-matrix \cite{AK97, Cha02, Kas02}, the poles of the trigonometric K-matrix may fail to detect the reducibility of $V$. 
In Section~\ref{ex:q-Ons-K}, we describe the trigonometric K-matrices arising from the action of the q-Onsager algebra on the vector representation of $U_q(\wh{\mathfrak{sl}}_2)$. 
Interestingly, the number of poles in this case may vary from two to zero, depending on the chosen embedding of the q-Onsager algebra in $U_q(\wh{\mathfrak{sl}}_2)$.
Whenever the trigonometric K-matrix has less than two poles, it fails to detect reducibility.

\subsection{}\label{ss:KR-intro}
Finally, we prove that our approach produces solutions of the standard and transposed reflection equations on certain irreducible $\Uqagp$-modules.
We consider only \emph{$\tau$-restrictable} QSP subalgebras, \ie those whose associated diagrammatic involution $\tau$ fixes the affine node of the Dynkin diagram. 
With the exception of one case in type ${\sf D}^{(1)}_{2n}$, this is equivalent to $\tau$ being either the identity or the affine extension $\eta_0$ of the opposition involution of the underlying subdiagram of finite type (see Section~\ref{ss:tau-res}).

We then consider two special classes of irreducible $\Uqagp$-modules: small modules and Kirillov-Reshetikhin modules. 
We prove that there are canonical twisting operators such that the corresponding trigonometric K-matrices are solutions of either the standard reflection equation if $\tau=\eta_0$ or the transposed reflection equation if $\tau=\id$ (see Theorem~\ref{thm:sol-std-tr-RE} for the case of small modules and Theorem~\ref{thm:sol-KR-RE} for the case of Kirillov-Reshetikhin modules).\\
 
This result produces a large class of new solutions of the standard and the transposed reflection equations. 
Moreover, it shows that many solutions known in the literature arise from the action of universal K-matrices. 
In particular, we recover the solutions constructed in \cite{RV16} for the vector representation in affine types $\sfA$, $\sfB$, $\sfC$ and $\sfD$ for QSP subalgebras with $\tau(0)=0$ (except for affine type $\sfD_{2n}$ with $n$ even and $\tau\neq\id$) and those recently constructed in \cite{KOW24} for Kirillov-Reshetikhin modules in type $\sfA$ for quasi-split QSP subalgebras with fixed parameters (except for type $\sfA.4$).
We expect that the approach used in Section~\ref{s:recover-RE} naturally extends beyond the case $\tau\in\{\id,\eta_0\}$.

\subsection{Outline}
In Section~\ref{s:affine}, we review the construction of trigonometric R-matrices on irreducible finite-dimensional representations of quantum affine algebras.
In Section~\ref{s:affine-qsp}, 
we outline the construction of universal K-matrices for quantum Kac-Moody algebras arising from QSP subalgebras obtained in \cite{AV22}.
In Section~\ref{s:spectral}, we prove that universal K-matrices for quantum affine algebras give rise to formal spectral operators on finite-dimensional representations. 
In Section~\ref{s:rational}, we prove the existence of trigonometric K-matrices on irreducible representations. 
The latter result relies on generic irreducibility under restriction to a QSP subalgebra, which we prove in Section~\ref{s:irreducibility}. 
Finally, we apply our construction to the case of small modules and Kirillov-Reshetikhin modules in Section~\ref{s:recover-RE}.

\subsection{Acknowledgments}
The authors would like to thank Martina Balagovi\'{c}, Vyjayanthi Chari, Gustav Delius, Pavel Etingof, Sachin Gautam, Stefan Kolb, Tomasz Prze\'{z}dziecki, Nicolai Reshetikhin, Jasper Stokman, Catharina Stroppel, Valerio Toledano Laredo, Curtis Wendlandt, Robert Weston and Huafeng Zhang for their interest in this work and for useful discussions. 
This work started during the hybrid Mini-Workshop \emph{Three Facets of R-Matrices} at the Mathematisches Forschungsinstitut Oberwolfach in October 2021. 
The authors are grateful to the organizers for their invitation and to the entire MFO team for the wonderful working conditions.
BV would also like to thank the Max Planck Institute for Mathematics in Bonn for the great research environment during his visit.


\section{Quantum affine algebras and their R-matrices}\label{s:affine}

In this section, we recall the definition of (untwisted) quantum affine algebras and basic results on their irreducible finite-dimensional modules. Given a free abelian group $\mathsf{\Lambda}=\bigoplus_{\alpha}\bbZ\mathbf{e}_{\alpha}$, we shall denote its non-negative cone by $\mathsf{\Lambda}_+$, \ie $\mathsf{\Lambda}_+=\bigoplus_{\alpha}\bbZ_{\geqslant 0}\mathbf{e}_{\alpha}$.


\subsection{Affine Lie algebras \cite{Kac90}}\label{ss:affine}

Let $\g$ be a finite-dimensional simple Lie algebra defined over
$\bbC$ with Cartan subalgebra $\h\subset\g$. 
Let $\fIS\coloneqq\{1,2,\dots, \rank(\g)\}$ be the set of vertices of the corresponding Dynkin diagram, $A=(a_{ij})_{i,j\in\fIS}$ the Cartan matrix and $\iip{\cdot}{\cdot}$ the normalized invariant bilinear form on $\g$.
Denote the simple roots by $\al_i$ and the simple coroots by $h_i$ ($i \in \fIS$) so that $\cp{\rt{j}}{\cort{i}}=a_{ij}$ for all $i,j \in \fIS$. 
We consider the root and coroot lattices:
\begin{equation*}
\Qlat = {\sf Sp}_\Z \{ \al_i \, | \, i \in \fIS \} \subset\h^*\qq\mbox{and}\qq \Qlat^{\vee} = {\sf Sp}_\Z \{ h_i \, | \, i \in \fIS \} \subset\h.
\end{equation*}
Let $\rootsys_+\subset\Qp$ be the set of positive roots, $\hrt=\sum_{i\in\fIS}a_i\rt{i}$ the highest root, where $a_i$ are the labels of the extended Dynkin diagrams from \cite[Table Aff 1.]{Kac90}, and $\Plat\coloneqq\{\lambda\in\h^*\;|\;\lambda(\Qlatv)\subset\bbZ\}$ the weight lattice.\\

Let ${\ag}$ be the Kac-Moody algebra of (untwisted) affine type 
associated to $\g$ with affine Cartan subalgebra ${\ah}\subset{\ag}$.
Set $\agp\coloneqq [\ag,\ag]$ and $\ahp \coloneqq \agp \cap \ah$.
Let $\aIS\coloneqq\{0\}\cup\fIS$ be the set of vertices of the affine Dynkin diagram 
and $\aff{A}=(a_{ij})_{i,j\in\aIS}$ the extended Cartan matrix, see \cite[Table Aff. 1]{Kac90}. 
We denote by $\aQv \subset {\ah}$ and $\aQ \subset {\ah}^*$ the affine coroot and root lattices, respectively.
Let $\drv{}\in\aQp$ and $c\in\aQvp$ be the unique elements such that
\begin{align*}
	\{\lambda\in\aQ\;|\; \forall i\in\aIS,\, \cp{\lambda}{\cort{i}}=0 \}=\Z\drv{}\aand
	\{h\in\aQv\;|\; \forall i\in\aIS,\, \cp{\rt{i}}{h}=0 \}=\Z \central{}
\end{align*}
In particular, $\drv{}=\rt{0}+\hrt$, $c$ is central in ${\ag}$, and, under the identification $\nu:{\ah}\to{\ah}^*$ induced by the bilinear form, one has $\nu(c)=\delta$.
The sets of real and imaginary affine positive roots in $\aQp$ are described by 
\begin{equation*}
	\aff{\Delta}_+^{\sf re}=\Delta_++\bbZ_{\geqslant0}\drv{}
	\aand
	\aff{\Delta}_+^{\sf im}=\bbZ_{>0}\drv{}\, .
\end{equation*}
We fix $\codrv{}\in{\ah}$ such that $\cp{\rt{i}}{\codrv{}}=\drc{i0}$ for any $i\in\aIS$. 
Note that $\codrv{}$ is defined up to a summand proportional to $c$ and we obtain a natural identification ${\ah}=\h\oplus\bbC c\oplus\bbC\codrv{}$. 
In terms of the \emph{extended} coroot lattice $\aQvext\coloneqq\aQv\oplus\bbZ d\subset{\ah}$ we set 
\[
\aP\coloneqq\{\lambda\in{\ah}^*\;|\;\lambda(\aQvext)\subset\bbZ\}\,.
\]
Then, the quotient lattice
$\aP/\bbZ\drv{}\simeq\hom_{\bbZ}(\aQv,\bbZ)$
has a basis given by the images of the fundamental weights in $\aP$.

\subsection{Drinfeld-Jimbo presentation of the quantum affine algebra} \label{ss:quantumaffine}

Let $q$ be an indeterminate\footnote{
	The results of this paper are also valid in the case $\bsF=\bbC$ and $q\in\bbC^\times$ not a root of unity.
} and set $\bsF\coloneqq\ol{\bbC(q)}$.
Fix non-negative integers $\{\de{i}\;|\; i\in\aIS\}$ such that the matrix $(\de{i}a_{ij})_{i,j\in\aIS}$ is symmetric and set $q_i\coloneqq q^{\de{i}}$.

The quantum Kac-Moody algebra associated to ${\ag}$, see \cite{Dri85,Dri86,Jim86a}, is the unital associative algebra $\Uqag$ defined over $\bsF$ with generators $\Eg{i}$ and $\Fg{i}$ ($i\in\aIS$), and $\Kg{h}$ ($h\in\aQvext$) subject to the following defining relations:
\begin{gather*}
	\Kg{h}\Kg{h'}=\Kg{h+h'}, \qq \Kg{0}=1, \\	
	\Kg{h}\Eg{i}=q^{\cp{\rt{i}}{h}}\Eg{i}\Kg{h}, \qq \Kg{h}\Fg{i}=q^{-\cp{\rt{i}}{h}}\Fg{i}\Kg{h},\\
	[\Eg{i},\Fg{j}]=\drc{ij}\frac{\Kg{i}-\Kg{i}^{-1}}{q_i-q_i^{-1}}, \label{Uqag:relns3} 
\end{gather*}
for any $i,j\in\aIS$ and $h,h'\in\aQvext$, where $\Kg{i}^{\pm1}=\Kg{\pm \de{i}\cort{i}}$,
together with the quantum Serre relations, see, \eg \cite[Cor.~33.1.5]{Lus94},
\begin{equation*} 
	\mathsf{Serre}_{ij}(E_i,E_j) =0= \mathsf{Serre}_{ij}(F_i,F_j)
\end{equation*}
for $i\neq j$, where $\mathsf{Serre}_{ij}$ denotes the following polynomial in two noncommuting variables:
\begin{equation} \label{Serredef}
	\mathsf{Serre}_{ij}(x,y) = \sum_{r=0}^{1-a_{ij}} (-1)^n \binom{1-a_{ij}}{r}_{q_i} x^{1-a_{ij}-r} y x^r
\end{equation}
with the q-deformed binomial coefficient, see, \eg \cite[1.3.3]{Lus94}.
We denote by $\Uqg$ the subalgebra generated by $\Eg{i}, \Fg{i}$ ($i\in\fIS$), and $\Kg{h}$ ($h\in\Qlatv$), which is a quantum Kac-Moody algebra of finite type.\\

We consider the Hopf algebra structure on $\Uqag$ determined by the coproduct formulae
\begin{align}
\Delta(\Eg{i}) &= \Eg{i}\ten1+\Kg{i}\ten\Eg{i},	& \Delta(\Fg{i}) &= \Fg{i}\ten\Kg{i}^{-1}+1\ten\Fg{i},	& \Delta(\Kg{h}) &= \Kg{h}\ten\Kg{h}, 
\end{align}
for any $i\in\aIS$ and $h\in\aQvext$.
The Chevalley involution $\chev:\Uqag\to\Uqag$ is the morphism of algebras defined by
\begin{equation}\label{eq:chevalley}
	\chev(\Eg{i})=-\Fg{i},\qq \chev(\Fg{i})=-\Eg{i}\, \qq \chev(\Kg{h})=\Kg{-h},
\end{equation}
for any $i\in\aIS$ and $h\in\aQvext$. Note that $\omega$ is an isomorphism of Hopf algebras
$\Uqag\to\Uqag^{{\sf cop}}$, where $\Uqag^{{\sf cop}}$ denotes the Hopf algebra structure given by the opposite coproduct $\Delta^{\sf op}$.\\

We denote by $\Uqahp$, $\Uqanp$, and $\Uqanm$ the subalgebras generated, respectively, by $\Kg{\pm h_i}$, $\Eg{i}$, and $\Fg{i}$ ($i\in\aIS$).
Their {finite-type} analogues $\Uqh, \Uqnpm$ are analogously defined, by replacing $\aIS$ with $\fIS$.

We set $\Uqabpm\coloneqq\Uqanpm\Uqah$, where $\Uqah$ is the commutative subalgebra generated by $\Kg{h}$, $h\in\aQvext$. \\

The \emph{quantum affine algebra} is the subalgebra $\Uqagp\coloneqq\Uqanp\Uqahp\Uqanm$, while
the \emph{quantum loop algebra} $\UqLg$ is the quotient of $\Uqagp$ by the ideal generated by $\Kg{c}-1$.
Note that the Hopf algebra structure and the Chevalley involution restrict to $\Uqagp$ and descend to $\UqLg$.

\subsection{Category $\Oinf$ and finite-dimensional modules}\label{ss:QL-representations}
Drinfeld-Jimbo quantum groups have two important categories of modules: finite-dimensional modules and integrable modules in category $\mathcal{O}$. For quantum groups of finite type, the two categories coincide, while they are strikingly different in affine type.

Recall that $V\in\Mod(\Uqah)$ is a (type {\bf 1}) weight module if $V=\bigoplus_{\mu\in\aP}\wsp{V}{\mu}$, where
	\begin{equation}
		\wsp{V}{\mu}\coloneqq\{v\in V\;|\; \forall h\in\aQvext,\, \Kg{h}\cdot v=q^{\cp{\mu}{h}}v\}\, .
	\end{equation}
In particular, $\Uqag$, $\Uqagp$, $\Uqabpm$ and $\Uqanpm$ are weight modules via the action $\Kg{h} \cdot x = \Kg{h} x \Kg{h}^{-1}$.\\

We denote by $\Oinf\subset\Mod(\Uqag)$ the full subcategory of weight modules with a locally finite $\Uqanp$-action.\footnote{
	This category is denoted by $\cC^{\scsop{hi}}$ in \cite[Sec.~3.4.7]{Lus94}. See also its generalization introduced in \cite[Sec.~15.1]{ATL24a}.
}
This is a braided monoidal category (see Section~\ref{ss:R-matrix}). Note that modules in $\Oinf$ are not necessarily finitely generated.

\begin{remark}
	Recall that, in analogy with \cite[Ch.~9]{Kac90}, a $\Uqag$-module is in category $\cO$ if it is 
	a weight module with finite-dimensional weight spaces, whose weights are contained in the union of finitely many cones. 
	Category $\cO$ modules are contained in $\Oinf$. 
	\hfill\rmkend
\end{remark}

Let $\Oint\subset\Oinf$ be the full subcategory of {\em integrable} modules, \ie those equipped with a locally nilpotent action
of the elements $\Eg{i}, \Fg{i}$ ($i\in\aIS$). This is a semisimple braided monoidal category, whose
nontrivial irreducible modules are infinite-dimensional and classified by nonzero dominant weights, see \cite[Thm.~6.2.2, Cor.~6.2.3]{Lus94}.\\

Let $\ModfdUqLg$ denote the category of finite-dimensional (type {\bf 1}) weight modules over $\Uqagp$. Any module in $\ModfdUqLg$ admits a weight decomposition over the quotient lattice $\aPd$, since
the central element $\Kg{c}$ forcibly acts as $1$. In particular,
$\ModfdUqLg$ is the category of finite-dimensional (type {\bf 1}) weight modules over $\UqLg$. 
This category is not semisimple and its irreducible representations are classified by $\rank(\g)$-tuples of monic polynomials, see, \eg \cite[Thm.~12.2.6]{CP95}. 
Moreover, $\ModfdUqLg$ is monoidal, but it is not braided.


\subsection{Completions}\label{ss:completions}
In the following, we consider suitable completions of the algebras $\Uqag$ and $\UqLg$, which contain certain distinguished operators acting on 
$\Oint$ and on $\ModfdUqLg$, respectively.\\

Let $A$ be a bialgebra and $\cA\subseteq\Mod(A)$ a monoidal subcategory. For any $n>0$, we denote by $\End(\FF{\cA}{})$ the algebra of endomorphisms of the $n$-fold forgetful functor $\FF{\cA}{n}\colon\cA^{n}\to\Vect{}$, given by $\FF{\cA}{n}(V_1,\dots, V_n)= V_1\ten\cdots\ten V_n$. Note that there is a canonical morphism of algebras $A^{\ten n}\to \End(\FF{\cA}{n})$.
The monoidal structure on $\cC$ induces on the tower of algebras $\End(\FF{\cA}{n})$ ($n\geqslant1$) the structure of a \emph{cosimplicial algebra}. Roughly, this means that $\End(\FF{\cA}{})$ can be thought of as a topological bialgebra, whose structure extends that of $A$, see \eg~ \cite[Sec.~8.8]{ATL19}.
Every automorphism $\phi:A\to A$, whose pullback preserves $\cA$, naturally extends to $\End(\FF{\cA}{})$.\\

We denote by $\CUqag{n}$ and $\CUqagint{n}$ the completion of $\Uqag^{\ten n}$ with respect to $\Oinf$ and $\Oint$, respectively.
There is a canonical morphism $\CUqag{n}\to\CUqagint{n}$ given by restriction from $\Oinf$ to $\Oint$.
Moreover, by \cite[Sec.~3]{ATL24b}, $\Uqag^{\ten n}$ embeds into both $\CUqag{n}$ and $\CUqagint{n}$.

Similarly, we denote by $\CUqLg{n}$ the completion of $\UqLg^{\ten n}$ with respect to $\ModfdUqLg$. 


\subsection{The universal R-matrix}\label{ss:R-matrix}

The Hopf algebra $\Uqag$ is {quasitriangular}, \ie it admits a universal R-matrix $\RM{}\in\End(\FF{\cO}{2})$, satisfying the intertwining equation $\RM{}\Delta(x)=\Deltaop(x) R$ for any $x\in\Uqag$ and the coproduct identities
\begin{equation}
	(\Delta \ten \id)(\RM{}) = \RM{13} \RM{23},
	\qq \qq
	(\id \ten \Delta)(\RM{}) = \RM{13} \RM{12}\, .
\end{equation}
In particular, it follows that $\RM{}$ satisfies the Yang-Baxter equation 
\begin{equation*}
	\RM{12} \RM{13} \RM{23} = \RM{23} \RM{13} \RM{12}.
\end{equation*}
As a consequence, the tensor category $\Oinf$ is braided. 

The element $\RM{}$ arises from the Drinfeld double construction of $\Uqag$ as the canonical tensor of a Hopf pairing between $\Uqabm$ and $\Uqabp$
\cite{Dri86, Lus94}.
More precisely, let $\{u_i\}, \{u^i\}\subset\h$ be dual bases and set 
\begin{equation*} 
	\Omega_0\coloneqq\sum_iu_i\ten u^i, \qq \qq \wh{\Omega}_0\coloneqq m(c\ten d+d\ten c)+\Omega_0,
\end{equation*}
where $m=1,2,3$ if $\g$ is of type $\mathsf{ADE}$, $\mathsf{BCF}$, or $\mathsf{G}$, respectively. 
Then the R-matrix of $\Uqag$ lies in the completion of $\Uqabm \ten \Uqabp$ with respect to the $\Qlat^+$-grading and has the form
\begin{align}\label{eq:R-mx}
	\RM{} = q^{\wh{\Omega}_0}\cdot\sum_{\mu>0}\Xi_{\mu},
\end{align}
where $\Xi_{\mu}\in\Uqanm_{-\mu}\ten\Uqanp_{\mu}$ 
and $q^{\wh{\Omega}_0}$ 
acts on tensor products of weight vectors as 
\begin{equation*}
q^{\wh{\Omega}_0}\cdot (v \ten w) = q^{\iip{\wgt{v}}{\wgt{w}}} v \ten w.
\end{equation*}
Since $(\omega\ten\omega)(\RM{})=\RM{21}$, the Chevalley involution is an isomorphism of the quasitriangular Hopf algebras $\Uqag$ and $\Uqag^{\sf cop}$.


\subsection{The spectral R-matrix}\label{ss:spectral-R}
The universal R-matrix of $\Uqag$ does not immediately act on finite-dimensional $\UqLg$-modules. 
The first obstacle is given by the operator $q^{m(c\ten d+d\ten c)}$. 
However, this is easily solved by observing that, since the central element $\Kg{c}$ acts by $1$ if and only if $c$ acts by zero, that factor can be ignored.
The second obstacle is given by the fact that the operator $\Xi\coloneqq\sum_{\mu>0}\Xi_{\mu}$ is not necessarily defined on finite-dimensional representations over $\UqLg$. 
To this end, set
\begin{equation*}
	\Uqag[z,z^{-1}]\coloneqq\Uqag\ten\bbF[z,z^{-1}]
\end{equation*}
and consider the \emph{homogeneous grading shift} automorphism
\begin{equation}\label{eq:grading-shift}
	\shift{z}: \Uqag\to\Uqag[z,z^{-1}]
\end{equation}
given by $\shift{z}(\Eg{i})\coloneqq z^{\drc{i0}}\Eg{i}$, $\shift{z}(\Fg{i})\coloneqq z^{-\drc{i0}}\Fg{i}$, and $\shift{z}(\Kg{h})\coloneqq\Kg{h}$. 
Note that, by specializing $z$ to $\bsF^\times$, we obtain a one-parameter family of automorphisms of $\Uqag$. 
Then, let
\begin{equation}\label{eq:shift-coproduct}
	\Delta_{z}, \Deltaop_{z}:\Uqag\to(\Uqag\ten\Uqag)[z,z^{-1}]
\end{equation}
be the \emph{shifted coproducts} defined by 
\begin{equation*}
	\Delta_z(x) \coloneqq (\id\ten\shift{z})(\Delta(x)), \qq \Deltaop_z(x) \coloneqq (\id\ten\shift{z})(\Deltaop(x)).
\end{equation*}
The grading shift is well-defined on $\UqLg$. 
For any $V\in\ModfdUqLg$ with action $\pi_V:\UqLg\to\End(V)$, we consider the infinite-dimensional (over $\bsF$) modules 
\begin{equation*}
	\shrep{V}{z}\coloneqq V\ten\bsF(z), \qq \Lshrep{V}{z}\coloneqq V\ten\Lfml{\bsF}{z},
\end{equation*}
with action given by $\pi_V(\shift{z}(x))$. 
By considering the projection of the formal series 
\begin{equation*}
(\id\ten\shift{z})(\RM{})\in \fml{ (\Uqag^{\ten 2})^{\Oinf}}{z}
\end{equation*}
on the quantum loop algebra, one obtains the following theorem, see~\cite{Dri86, FR92, Her19}.

\begin{theorem}\label{thm:spectral-R}\hfill
	\begin{enumerate}\itemsep0.25cm
		\item The quantum loop algebra $\UqLg$ has a universal \emph{spectral} R-matrix, \ie a distinguished element 
		$\sRM{}{z}\in\fml{\CUqLg{\ten 2}}{z}$ such that $(\shift{a}\ten\shift{b})(\sRM{}{z})=\sRM{}{\frac{b}{a}z}$ ($a,b\in\bsF^{\times}$), 
		\begin{gather*}
			\sRM{}{z}\Delta_{z}(x) = \Deltaop_{z}(x)\sRM{}{z}
		\end{gather*}
		for $x \in \UqLg$, and  following coproduct identities are satisfied:
		\begin{gather*}
			(\Delta_z\ten\id)(\sRM{}{zw}) = \sRM{}{zw}_{13} \sRM{}{w}_{23},\\ (\id\ten\Delta_w)(\sRM{}{z}) = \sRM{}{z}_{13} \sRM{}{zw}_{12} \, .
		\end{gather*}
		In particular, the spectral Yang-Baxter equation holds:
		\begin{equation*}\label{eq:z-YBE}
			\sRM{}{z}_{12} \sRM{}{zw}_{13} \sRM{}{w}_{23} = \sRM{}{w}_{23} \sRM{}{zw}_{13} \sRM{}{z}_{12}\, .
		\end{equation*}
		\item For any $V,W\in\ModfdUqLg$, the operator
		\begin{equation}\label{eq:sRM-rep}
			\sRM{VW}{z} \coloneqq (\pi_{V}\ten\pi_W)(\sRM{}{z})
			\in\fml{\End(V\ten W)}{z}
		\end{equation}
		is well-defined and yields an intertwiner
		\begin{equation}\label{eq:sRMv-rep}
			\sRMv{VW}{z}\coloneqq(1\,2)\circ\sRM{VW}{z}: V \ten \Lshrep{W}{z} \to \Lshrep{W}{z} \ten V \, .
		\end{equation}
	\end{enumerate}
\end{theorem}

\begin{remark}
	The specialization of $z$ in $\bsF^\times$ yields the notion of a shifted
	finite-dimensional $\UqLg$-module. Namely, for any $a\in\bsF^\times$
	and $V\in\ModfdUqLg$ with action map $\pi_V:\UqLg\to\End(V)$, 
	the shifted $\UqLg$-module $\shrep{V}{a}$ is the vector space $V$ 
	equipped with the action $\pi_V\circ\shift{a}$.\hfill\rmkend
\end{remark}


\subsection{Trigonometric R-matrices}\label{ss:rational-R}
In the case of irreducible modules, the operator $\sRM{VW}{z}$ has the following rationality property, see, \eg~\cite{Dri86, FR92, KS95} and \cf~\cite{Jim86b}. 

\begin{theorem}\label{thm:rational-R}
	Let $V,W\in\ModfdUqLg$ be two irreducible representations.
	There exists a canonical 
	scalar-valued formal Laurent series $f_{VW}(z)\in\Lfml{\bsF}{z}$ such that
	\begin{equation*}
		\rRM{VW}{z}\coloneqq f_{VW}(z)^{-1}\sRM{}{z}\in\Lfml{\End(V\ten W)}{z}
	\end{equation*} 
	is rational, satisfies the spectral Yang-Baxter equation \eqref{eq:z-YBE} and satisfies the unitarity relation 
	\begin{equation} \label{eq:R:unitarity}
		\rRM{VW}{z}^{-1}=(1\, 2)\circ\rRM{WV}{z^{-1}}\circ(1\, 2).
	\end{equation}
	In particular, $\rRMv{VW}{z}\coloneqq(1\,2)\circ\rRM{VW}{z}$ 
	is an intertwiner $V\ten\shrep{W}{z}\to \shrep{W}{z}\ten V$.
\end{theorem}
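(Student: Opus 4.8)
The plan is to produce the scalar series $f_{VW}(z)$ from the known generic irreducibility of the tensor product, and then to read off rationality and unitarity by a dimension count on intertwiner spaces. First I would recall the key input: if $V,W\in\Repfd(\UqLg)$ are irreducible, then $\Lshrep{V}{z}\ten W$ is irreducible as a module over $\UqLg\ten\Lfml{\bsF}{z}$ (generic irreducibility of tensor products; this is the Laurent-series analogue of the statement quoted in the introduction, and it follows from the classification of irreducibles by Drinfeld polynomials together with the argument in \cite{kazhdan-soibelman-95}, or alternatively from a specialization-of-poles argument). By Theorem~\ref{thm:spectral-R}, $\sRMv{VW}{z}$ is an intertwiner $\Lshrep{V}{z}\ten W\to W\ten\Lshrep{V}{z}$, and the same holds with $V,W$ swapped; composing, $(1\,2)\circ\sRM{WV}{z}\circ(1\,2)\circ(1\,2)\circ\sRM{VW}{z}=\sRM{WV}{z}_{21}\cdot\sRM{VW}{z}$ is an endomorphism of the irreducible module $\Lshrep{V}{z}\ten W$, hence a scalar $g_{VW}(z)\in\Lfml{\bsF}{z}$ (a priori possibly zero, but nonvanishing follows since $\sRM{}{z}$ specializes to $q^{\wh\Omega_0}\cdot(1+O(z))$, which is invertible). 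One then sets $f_{VW}(z)$ to be a canonical square-root-type normalization so that $\rRM{VW}{z}\coloneqq f_{VW}(z)^{-1}\sRM{VW}{z}$ satisfies the unitarity relation \eqref{eq:R:unitarity}; this forces $f_{VW}(z)f_{WV}(z^{-1})=g_{VW}(z)$ together with a compatibility at $z=1$, which determines $f_{VW}$ uniquely up to the customary choices (normalization $\rRM{VW}{z}$ acts as identity on the ``top'' component, as in \cite{frenkel-reshetikhin-92}).

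Next I would establish rationality. The renormalized operator $\rRM{VW}{z}$ is, by construction, the unique (up to scalar) nonzero intertwiner $\Lshrep{V}{z}\ten W\to W\ten\Lshrep{V}{z}$; but such an intertwiner can be constructed over the field of rational functions $\bsF(z)$ rather than only over $\Lfml{\bsF}{z}$, because $V\ten\bsF(z)\ten W$ and $W\ten V\ten\bsF(z)$ are finite-dimensional over $\bsF(z)$ and irreducible, so $\Hom$ between them is one-dimensional over $\bsF(z)$ and a generator is a matrix with entries in $\bsF(z)$. By uniqueness, this rational generator agrees, up to an element of $\bsF(z)$, with the Laurent-series operator $\sRM{VW}{z}$; dividing by exactly that element is the definition of $f_{VW}(z)$ (one checks it lies in $\Lfml{\bsF}{z}$ because $\sRM{VW}{z}$ does and the rational generator is invertible), and hence $\rRM{VW}{z}\in\Lfml{\End(V\ten W)}{z}$ is actually valued in $\End(V\ten W)\ten\bsF(z)$, i.e.\ rational. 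The spectral Yang--Baxter equation \eqref{eq:z-YBE} for $\rRM{}{z}$ then follows from the one for $\sRM{}{z}$: each side of \eqref{eq:z-YBE} is $\sRM{}{z}$ multiplied by the \emph{same} product of scalars $f_{UV}f_{UW}f_{VW}$ (using $\shift{a}\ten\shift{b}(\sRM{}{z})=\sRM{}{(b/a)z}$ to track the arguments $z,zw,w$), so dividing through preserves the identity; similarly unitarity \eqref{eq:R:unitarity} holds by the normalization chosen for $f_{VW}$. Finally, $\rRMv{VW}{z}\coloneqq(1\,2)\circ\rRM{VW}{z}$ being an intertwiner $V\ten\shrep{W}{z}\to\shrep{W}{z}\ten V$ is immediate from the intertwining property in Theorem~\ref{thm:spectral-R}(2) after clearing the scalar.

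The main obstacle is the rationality / canonicity step: one must know that the tensor product of two irreducible finite-dimensional $\UqLg$-modules remains irreducible after the base change to $\bsF(z)$ (or $\Lfml{\bsF}{z}$) via one grading shift, so that the intertwiner space is genuinely one-dimensional and the renormalization is forced; without generic irreducibility the operator $\sRM{VW}{z}$ need not be proportional to a rational matrix. I would invoke this from the literature (\cite{kazhdan-soibelman-95}, or \cite{frenkel-reshetikhin-92} combined with the Drinfeld-polynomial classification \cite[Thm.~12.2.6]{chari-pressley}); it is precisely the R-matrix prototype of the ``generic QSP irreducibility'' result proved later in Section~\ref{s:irreducibility} for the coideal subalgebra, so no new argument is needed here, only a careful citation. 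A secondary, purely bookkeeping point is pinning down the canonical choice of $f_{VW}(z)$ and checking it is independent of auxiliary choices (e.g.\ that it depends only on the isomorphism classes of $V,W$); this is standard and handled exactly as in \cite{frenkel-reshetikhin-92,etingof-frenkel-kirillov-98}.
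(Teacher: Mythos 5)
The paper does not actually prove Theorem~\ref{thm:rational-R}: it is recalled from the literature (Jimbo, Frenkel--Reshetikhin, Kazhdan--Soibelman, \dots), with only the remark that the proof rests on generic irreducibility of $\Lshrep{V}{z}\ten W$ plus a Schur-type argument, and that $f_{VW}(z)$ is pinned down by the condition $\rRM{VW}{z}(v\ten w)=v\ten w$ on $\ell$-highest weight vectors. Your proposal reconstructs exactly this strategy (generic irreducibility, one-dimensionality of the intertwiner space, rational solvability of the finite linear intertwining system over $\bsF(z)$ --- the same mechanism the paper later uses for K-matrices in Lemma~\ref{lem:rational-intertwiner} and Theorem~\ref{thm:rational-k}), so in substance you are on the intended route.

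Two points should be corrected. First, the normalization: the canonical $f_{VW}(z)$ is \emph{defined} as the eigenvalue of $\sRM{VW}{z}$ on $v\ten w$, where $v,w$ are $\ell$-highest weight vectors (that $v\ten w$ is an eigenvector follows from the triangular form \eqref{eq:R-mx} of $R$ and the one-dimensionality of the top weight spaces), and unitarity is then \emph{derived}, not imposed: the Schur scalar of the composite intertwiner equals its eigenvalue on $v\ten w$, which is $1$ after this normalization. Your ``square-root-type'' prescription is problematic as a definition: it is not canonical (sign/branch ambiguity), the relation $f_{VW}(z)f_{WV}(z^{-1})=g_{VW}(z)$ by itself does not determine $f_{VW}$, and a square root need not even exist in $\Lfml{\bsF}{z}$ when the leading exponent is odd; the parenthetical appeal to the Frenkel--Reshetikhin top-component normalization is the fix, and should be promoted to the definition. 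Second, a parameter slip: the composite you form, $\sRM{WV}{z}_{21}\cdot\sRM{VW}{z}$ with the \emph{same} argument, is not an endomorphism of $\Lshrep{V}{z}\ten W$; since $\Lshrep{V}{z}\ten W$ and $V\ten\Lshrep{W}{1/z}$ differ by a global twist by $\shift{z}$, the correct composite is $\sRM{WV}{z^{-1}}_{21}\circ\sRM{VW}{z}$, consistent with the inversion appearing in \eqref{eq:R:unitarity} and with the relation $f_{VW}(z)f_{WV}(z^{-1})=g_{VW}(z)$ you write afterwards. Finally, when you invoke Schur over $\bsF(z)$ or $\Lfml{\bsF}{z}$ (non-algebraically closed fields), one should, as the paper does in the K-matrix case, pass to Puiseux series to conclude that the endomorphism is scalar; this is routine but worth saying.
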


The operator $\rRM{VW}{z}$ is called the \emph{normalized} R-matrix, which is an example of a \emph{trigonometric} R-matrix. 
The proof of the theorem relies on the \emph{generic irreducibility} of the tensor product $V\ten W$, \ie on the irreducibility of $V\ten\Lfml{W}{z}$ over $\Lfml{\UqLg}{z}$ (see~ \cite[Sec.~4.2]{KS95} or \cite[Thm.~3]{Cha02}). 
Note that the function $f_{VW}(z)$ is uniquely determined by the condition $\rRM{}{z}(v_0\ten w_0)=v_0\ten w_0$, where $v_0\in V$ and $w_0\in W$ are $\ell$-{\em highest weight vectors}.\footnote{
	By \cite[Cor.~12.2.5]{CP95}, every finite-dimensional irreducible module $V$ is generated by an $\ell$-{\em highest weight vector}, \ie a weight 			vector $v_0$, which is annihilated by $\Fg{0}$ and $\Eg{i}$ ($i\in\fIS$) and is a simultaneous eigenvector for all imaginary root vectors.
}

\begin{remarks}\label{rmk:spectral-R-matrix}
	\hfill
	\begin{enumerate}[leftmargin=2em]\itemsep0.25cm
		\item  \label{rmk:Rcrossingsymmetry}
		If $q \in \C^\times$ is not a root of unity, $\sRM{VW}{z}$ is a \emph{meromorphic} operator for 
		{\em any} finite-dimensional modules $V$ and $W$ \cite{FR92,KS95, EM02}.  
		Relying on the functional relation between $\sRM{VW}{z}$ and $\sRM{V^{**} \, W}{z}$ (known as \emph{crossing symmetry}), 
		one proves that the operator $\sRM{}{z}$ is analytic near zero and therefore meromorphic on $\bbC$.
		\item  \label{rmk:Rregularity}
		A finite-dimensional irreducible $\UqLg$-module $V$ is called	{\em real} if $V\ten V$ is still irreducible. 
		Thus, in this case, one has $\rRMv{VV}{1}=\id_{V\ten V}$ (see also \cite[Lemma 10.2 and the following discussion]{FHR21}). 
		\hfill \rmkend
	\end{enumerate}
\end{remarks}

\begin{example}
	Let $V_1=\bsF^2$ be the fundamental representation of $U_q(\mathfrak{sl}_2)$. For any $a\in\bsF^\times$, we 
	consider the evaluation representation $\evrep{1}{a}=\bsF^2$
	with action of $U_q(L\mathfrak{sl}_2)$ given by 
	\begin{gather*}
		\pi(E_0) = \begin{pmatrix} 0 & 0 \\ q^{-1}a & 0 \end{pmatrix} = q^{-1}a\pi(F_1)\, , 
		\qq \qq
		\pi(F_0) = \begin{pmatrix} 0 & qa^{-1} \\ 0 & 0 \end{pmatrix}= qa^{-1}\pi(E_1)\,, \\
		\pi(K_0) = \begin{pmatrix} q^{-1} & 0 \\ 0 & q \end{pmatrix} =\pi(K_1)^{-1}\,.
	\end{gather*}
	In the case of $\evrep{1}{a}\ten\evrep{1}{bz}$, the rational function $\rRM{ab}{z}\coloneqq\rRM{\evrep{1}{a}\,\evrep{1}{b}}{z}$ is easily computed, see, \eg \cite[12.5.7]{CP95}, \cite{Jim86b}.
	Set $\lambda\coloneqq b/a$. 
	Then
	\begin{equation*}
		\rRM{ab}{z}\coloneqq
		\left(
		\begin{array}{cccc}
			1 & 0 & 0 & 0\\
			0 & \frac{q(1-\lambda z)}{q^2-\lambda z} & \frac{\lambda z(q^2-1)}{q^2-\lambda z} & 0\\
			0 & \frac{q^2-1}{q^2-\lambda z} & \frac{q(1-\lambda z)}{q^2-\lambda z} & 0\\
			0 & 0 & 0 & 1
		\end{array}
		\right).
	\end{equation*}
	Note that, if $\lambda=q^{2}$, $\rRM{ab}{z}$ has a pole at $z=1$, while, if $\lambda=q^{-2}$, $\rRM{ab}{z}$ is not invertible at $z=1$. 
	It is well-known that $\evrep{1}{a}\ten\evrep{1}{b}$ fails to be irreducible precisely when $\la=q^{\pm 2}$. 
	Finally, note that $\rRMv{aa}{1}$ is the identity.
	\rmkend
\end{example}


\section{Quantum affine symmetric pairs}\label{s:affine-qsp}

\subsection{Generalized Satake diagrams}\label{ss:gsat}

Classical and quantum Kac-Moody algebras are defined in terms of combinatorial datum encoded by the Dynkin diagram and the Cartan matrix. 
Similarly, classical and quantum symmetric pairs \cite{Let02,Kol14} 
arise from a refinement of such datum.\\

Let $\Aut(\aff{A})$ be the group of \emph{diagram automorphisms} of the affine Cartan matrix, \ie the group of permutations $\tau$ of $\aIS$ such that $a_{ij}=a_{\tau(i)\tau(j)}$.
Let $X \subset\aIS$ be a proper subset of indices. 
Thus, the corresponding Cartan matrix $\aA_X$ is necessarily of finite type.
We denote by $\ag_X\subset\ag$ the corresponding Lie subalgebra and by $\oi_X\in\Aut(\wh{A}_X)$ the {\em opposition involution of $X$}, \ie the involutive diagram automorphism of $X$ induced by the action of the longest element $w_X$ of the Weyl group $W(\ag_X)$ on the root lattice of $\ag_X$.

\begin{definition}[\cite{RV20,RV21}] \label{def:GSat}
	A \emph{generalized (affine) Satake diagram} is a pair $(X,\tau)$ where $X \subset\aIS$ and $\tau$ is an involutive diagram automorphism stabilizing $X$ such that
	\begin{enumerate}\itemsep0.25cm
		\item $\tau|_X = \oi_X$;
		\item for any $i \in \aIS \setminus X$ such that $\tau(i)=i$, the connected component of $X \cup \{ i \}$ containing $i$ is not of type ${\sf A}_2$.
	\end{enumerate}
	The set of all such diagrams is denoted by $\gsat{\aff{A}}$. \hfill \rmkend
\end{definition}

A classification of generalized Satake diagrams of affine type is provided in \cite[App.~A, Tables 5, 6 and 7]{RV21}.
Henceforth, we fix $(X,\tau)\in \gsat{\aff{A}}$.

\begin{example} \label{exam:sl2GSat}
	Consider the affine Lie algebra $\wh{\mathfrak{sl}}_2$ and set $\wh I = \{ 0,1 \}$.
	There are four generalized Satake diagrams given by
\[	
(\emptyset,\id)\,, \qq (\emptyset,(01))\,, \qq (\{0\},\id)\,, \qq  (\{1\}, \id) 
\]
where $(01)$ denotes the nontrivial diagram automorphism. \hfill \rmkend
\end{example}

\subsection{Pseudo-involutions}\label{ss:pseudo-involutions}
The diagram automorphism $\tau\in\Aut(\aA)$ extends canonically to an automorphism of ${\agp}$, given on the generators by $\tau(e_i)=e_{\tau(i)}$, $\tau(f_i)=f_{\tau(i)}$, and $\tau(\cort{i}) = \cort{\tau(i)}$. 
The pair $(X,\tau)$ is then associated to the Lie algebra automorphism $\theta:\agp\to\agp$, given by
\begin{align}\label{def:theta}
	\theta \coloneqq \Ad(\wt{w}_X) \circ \omega \circ \tau
\end{align}
where $\omega$ denotes the Chevalley involution on $\aff\g$ 
and $\wt{w}_X$ is defined in terms of Tits' triple exponential operators, see, \eg \cite[Sec.~3.11]{AV22}.
Note that $c$ is fixed by $\tau$ and $w_X$.
Hence, $\tsat(c)=-c$ and $\tsat$ descends to an automorphism of $L\fkg$.

\begin{remarks} \label{rmk:KM:pseudoinvolution} \hfill
	\begin{enumerate}[leftmargin=2em]\itemsep0.25cm
		\item \label{rmk:KM:pseudoinvolution:1}
		In \cite[Sec.~4.9]{KW92}, Kac and Wang defined a canonical procedure (for arbitrary generalized Cartan matrices) to extend a 
		diagram automorphism from ${\ahp}$ to ${\ah}$. 
		Thus, $\theta$ extends to an automorphism of ${\ag}$, which is \emph{of the second kind} (see \cite[4.6]{KW92}) and is an 
		involution on $\ah$.
		Following \cite{RV21}, we shall refer to $\tsat$ as a \emph{pseudo-involution of ${\ag}$ of the second kind}. 
		Note that ${\ah}^{\theta} \subseteq{\ahp}$, see \cite[Sec.~6.2]{AV22}.
		\item \label{rmk:KM:pseudoinvolution:2}
		Since $(X,\tau)$ and therefore $\theta$ are fixed, in the following we 
		will use the subscript $\tsat$ even in the case of objects explicitly defined 
		in terms of $(X,\tau)$. 
		Note that the datum $(X,\tau)$ can be recovered from $\theta$ 
		since $X=\{ i \in \aIS \, | \, \theta(h_i)=h_i \}$ and $\tau=\omega \circ \Ad(\wt{w}_X)^{-1} \circ \theta$. 
		\ourcomment{No bijection between $\gsat{\aff{A}}$ and pseudo-involutions of the second kind. Up to the adjoint action of suitable characters of $\Qlat$ and up to $\Aut(\fkg)$-conjugacy, the assignment $(X,\tau) \mapsto \theta(X,\tau)$ defines a map from $\gsat{\aff{A}}$ onto the set of pseudo-involutions of the second kind such that the corresponding restricted Weyl group is a Coxeter group, see \cite[Thm.~2.5, Thm.~4.29]{RV21}.}
		\item \label{rmk:KM:pseudoinvolution:3}
		The map on $\ah^*$ dual to $\theta$ is denoted by the same symbol and preserves $\aQ$. 
		Because $\tsat(c)=-c$, we have $\tsat(\drv{})=-\drv{}$ and hence $\bbZ \drv{}\subseteq \aQ^{-\tsat}$.
		Moreover, $\aQp=\Qp\oplus\bbZ_{\geqslant0}\drv{}$ and thus $\aQp^{-\tsat}=\Qp^{-\tsat}\oplus\bbZ_{\geqslant0}\drv{}$.
		The \emph{restricted rank} 
		of $\tsat$ is the rank of $\aQ^{-\tsat}$, given by the number of $\tau$-orbits in $\wh I \backslash X$, see \eg~\cite[Sec.~4]{RV21} and \cite[Sec.~8.10]{AV22}.
		In particular, $\tsat$ has restricted rank one if and only if $\aQ^{-\tsat}=\bbZ \drv{}$.
		\rmkend
	\end{enumerate}
\end{remarks}


\subsection{Quantum pseudo-involutions}\label{ss:qtheta}

We shall consider a distinguished lift of the pseudo-involution $\tsat$ to an algebra automorphism $\tsatq$ of $\Uqagp$ and $\UqLg$. 
This is obtained by choosing a suitable lift for each of the three factors in $\tsat$. 
First, we consider the standard Chevalley involution on $\Uqag$ given by \eqref{eq:chevalley}.
The diagram automorphism $\tau$ extends canonically to an automorphism of $\Uqagp$ given on the generators by $\tau(\Eg{i})=\Eg{\tau(i)}$, $\tau(\Fg{i})=\Fg{\tau(i)}$, and $\tau(\Kg{h})=\Kg{\tau(h)}$.\\

The action of the Weyl group operator $w_X$ is lifted to $\Uqagp$ as follows. 
Let $\qWS{X}$ be the braid group operator on modules in $\Oint$ corresponding to $w_X$, \cf~\cite[Sec.~5]{Lus94} (see also \cite[Sec.~5]{AV22}).
More precisely, given a reduced expression $s_{i_1}\cdots s_{i_{\ell}}$ of $w_X$ in terms of fundamental reflections, one sets $\qWS{X}\coloneqq \qWS{i_1}\cdots\qWS{i_{\ell}}$, where $\qWS{j}=T''_{j,1}$ 
in the notation from \cite[5.2.1]{Lus94}. 
It follows from the braid relations that $\qWS{X}$ is independent of the 
chosen reduced expression.
We shall consider a Cartan correction of $\qWS{X}$ given by
\begin{align} \label{def:t-Sat}
	\bt{\tsat} \coloneqq \tcorr{\tsat}\cdot\qWS{X}\,,
\end{align}
where $\tcorr{\tsat}$ is the Cartan operator defined as the multiplication by $q^{\iip{\tsat(\lambda)}{\lambda}/2+\iip{\lambda}{\rho_X}}$ on any weight vector of weight $\lambda$, with $\rho_X$ the half-sum of the positive roots of $\ag_X$, see~\cite[Sec.~4.9]{AV22}.
By \cite[Lemma~4.3 (iii)]{AV22}, $\adbt{\tsat}\coloneqq\Ad(\bt{\tsat})$ yields an algebra automorphism of $\Uqagp$.\\

The \emph{quantum pseudo-involution} is the automorphism of $\Uqagp$ given by
\begin{equation} \label{def:thetaq}
	\tsatq\coloneqq\adbt{\tsat}\circ\omega\circ\tau.
\end{equation}
Note that, as in the classical case, the three factors pairwise commute.
We have the basic properties
\begin{equation} \label{thetaq:basics}
	\tsatq|_{U_q(\agp_X)} = \id_{U_q(\agp_X)}, \qq
	\tsatq(K_h) = K_{\theta(h)}, \qq
	\tsatq\big(U_q(\wh\g)_\la \big) = U_q(\wh\g)_{\theta(\la)}\,,
\end{equation}
for any $h\in\aQvext$ and $\lambda\in\aQ$, see, \eg \cite[Lem.~6.10]{AV22}.
Note that $\tsatq$ descends to an automorphism of the quantum loop algebra $\UqLg$.

\begin{remark}
	Further to Remark \ref{rmk:KM:pseudoinvolution}, note that the Kac-Wang extension of $\tau$ to ${\ah}$ does not necessarily preserve the extended coroot lattice $\aQvext\subset{\ah}$, and therefore does not automatically extend to an automorphism of $\Uqag$. 
	To remedy this, one can modify the lattice itself by replacing the standard derivation $\codrv{}\in{\ah}$ with any $\codrv{\tau}\in{\ah}$ such that $\rt{i}(\codrv{\tau})=\rt{\tau(i)}(\codrv{\tau})$, see \cite[Sec.~2.6]{Kol14}. 
	However, for the purposes of this paper, it is sufficient to regard $\tsatq$ as an automorphism of $\Uqagp$.
	In particular, by \cite[Sec.~6.2]{AV22}, ${\ah}^{\tsat}\subset{\ahp}$ and accordingly its natural quantum analogue $U_q({\ah}^{\tsat}) = \langle \Kg{h} \,\vert\, {h\in(\aQvext)^\tsat} \rangle$ embeds in $U_q(\ahp)$.
	\rmkend
\end{remark}


\subsection{QSP subalgebras}\label{ss:qsp} 
Associated to the pseudo-involution $\tsat$, there is a family of coideal subalgebras $\Uqk \subset \Uqagp$, see \cite[Def.~6.11]{AV22}, which is parametrized by two sets $\Parsetc\subset(\bsF^\times)^{\aIS}$ and $\Parsets\subset\bsF^{\aIS}$ which we define below.
The subalgebras thus defined coincide with the subalgebras considered in \cite{Let02,Kol14} up to reparametrization, see \cite[Rmk.~6.13]{AV22}.

\begin{definition}
	The {\em QSP subalgebra} (corresponding to $\tsat$) with parameters $(\Parc,\Pars)$ $\in$ $\Parsetc \times \Parsets$ is the subalgebra
	$\Uqk \subset \Uqagp$ generated by the subalgebras
\begin{equation*}
U_q(\anp_X) = \langle E_i \,\vert\, {i \in X} \rangle, \qq \qq U_q({\ahp}^{\tsat}) = \langle \Kg{h} \,\vert\, {h\in(\aQvext)^\tsat} \rangle
\end{equation*}
and the elements $\Bg{i}$ given by 
	\begin{equation} \label{Bi:def}
		\Bg{i} \coloneqq
		\left\{
		\begin{array}{cl}
			\Fg{i} & \mbox{if }\, i\in X\, ,\\
			\Fg{i} + \parc{i} \, \tsatq(\Fg{i}) + \pars{i} \, \Kg{i}^{-1}  & \mbox{if }\, i\not\in X\, .
		\end{array}
		\right.
	\end{equation}
\end{definition}

The parameter sets $\Parsetc$ and $\Parsets$ are defined as follows, see \cite[Eqns.~(5.9) and (5.11)]{Kol14}. 
First, we fix a subset $\aIS^* \subseteq \aIS \setminus X$ containing a representative for every $\tau$-orbit in $\aIS\setminus X$. 
We set
\begin{align*}
	\aIdiff &\coloneqq \{ i \in \aIS^* \, | \, \tau(i) \ne i \text{ and } \exists j \in X \cup \{ \tau(i)\} \text{ such that } a_{ij} \ne 0 \}, \\
	\aIns &\coloneqq \{ i \in \aIS^* \, | \, \tau(i) =i \text{ and } \, a_{ij} = 0\quad \forall j \in X \}.
\end{align*}
Then, $\Parsetc$ is the set of tuples $\Parc\in(\bsF^\times)^{\aIS}$ such that $\parc{i}=1$ if $i\in X$ and $\parc{i} = \parc{\tau(i)}$ if $\{i , \tau(i) \} \cap \aIdiff = \emptyset$, while $\Parsets$ is the set of tuples $\Pars \in \bsF^{\aIS}$ such that $\pars{i}=0$ if $i\in\aIS\setminus\aIns$ and, for all $(i,j) \in (\aIns)^{\times 2}$, $a_{ij} \in 2\bbZ$ or $\Pars_j = 0$.
Note that $\Parsetc$ and $\Parsets$ do not depend on the choice of $\aIS^*$. 

The constraints given by the parameter sets $\Parsetc$ and $\Parsets$ are motivated by Proposition~\ref{prop:parameter-coideal} below, cf.~\cite[Props.~5.2 and 6.2]{Kol14}.


\begin{prop}\label{prop:parameter-coideal}
	The subalgebra $\Uqk$ is a right coideal in $\Uqagp$ and 
	has minimal intersection with $\Uqahp$, \ie
	\begin{equation*}
		\Delta(\Uqk)\subset\Uqk\ten\Uqagp \qq\mbox{and} \qq \Uqk\cap\Uqahp = U_q(\ahp^{\tsat})\,.
	\end{equation*}
\end{prop}

\begin{remark}\label{rmk:parameter-operator}
	Following \cite[Sec.~7.4]{AV22}, we shall regard the tuple $\Parc$ as a diagonal operator on weight representations. 
	Namely, we fix a group homomorphism $\parc{}:\aP\to\bbF^\times$ such that $\parc{}(\rt{i})\coloneqq\parc{i}$ ($i\in\aIS$). 
	Then, $\parc{}$ acts on any weight vector of weight $\lambda$ as multiplication by $\parc{}(\lambda)$.
	\rmkend
\end{remark}

\begin{example} \label{exam:qOnsager}
	Let $\fkg=\fksl_2$ and $(X,\tau) = (\emptyset,\id)$ (\cf~Example~\ref{exam:sl2GSat}). 
	The corresponding QSP subalgebra is the \emph{q-Onsager algebra} (see, \eg \cite{BK05} and references therein). 
	In our conventions, given parameters $\parc{0},\parc{1} \in \bbF^\times$ and $\pars{0},\pars{1} \in \bbF$, it is the coideal subalgebra of $U_q(\wh\fksl_2)$ generated by the elements
	\begin{equation}\label{qOnsager:generators}
		B_i \coloneqq F_i - q^{-1} \parc{i} \, E_i\,\Kg{i}^{-1}  + \pars{i} \, \Kg{i}^{-1}
	\end{equation}
	with $i=0,1$.
	Note that the parameters $\pars{i}$ are allowed to be nonzero, since in this case $\aIns = \{0,1\}$.
	\rmkend
\end{example}


\subsection{Gauge transformations and cylindrical structures}\label{ss:univ-kmx}

In \cite[Thms.~8.8-8.9]{AV22}, we proved that the QSP subalgebra $\Uqk$ gives rise to a family of K-matrices in $\Uqag$.
The result crucially relies on the construction of the \emph{quasi-K-matrix}, due to Bao and Wang \cite{BW18} and generalized in \cite{BK19,AV22}.
Let $\Gg$ be the group of invertible elements $\gau\in \CUqag{}$ such that $\Ad(\gau)$ preserves $\Uqag\subset\CUqag{}$. 
We fix $\gau \in \Gg$ and set
\begin{equation}\label{twistmapandDrinfeldtwist:formula}
\psi \coloneqq \Ad(\gau) \thinspace \circ \thinspace \tsatq^{-1} \qq \mbox{and}\qq
J \coloneqq (\gau\ten\gau) \cdot \RM{\theta} \cdot \Delta(\gau)^{-1}
\end{equation}
where $\RM{\tsat} = (\bt{\tsat}\ten\bt{\tsat})^{-1}\cdot \Delta(\bt{\tsat})$ is the R-matrix of $U_q(\ah^\theta)U_q(\ag_X)$.

\begin{theorem} \cite[Thms.~8.8-8.9]{AV22} \label{thm:av-k-mx}
	There exists a unique series 
	\begin{equation*}
	\QK{}= \sum_{\mu \in (\aQp)^{-\tsat}}\QK{\mu} \qq \mbox{with} \qq \QK{\mu}\in\Uqanp_\mu
	\end{equation*}
	such that $\QK{0}=1$ and, for any $\gau\in\Gg$, the operator
	\begin{equation} \label{eq:universal-K-matrix}
	\KM{}=\gau\cdot\Parc^{-1}\cdot\QK{}\in\CUqag{}
	\end{equation}
	satisfies the intertwining identity
	\begin{align}\label{eq:k-intertwiner}
		\KM{}\cdot b=\psi(b)\cdot\KM{}  \qquad(b \in \Uqk)
	\end{align}
	and the coproduct identity
	\begin{align}\label{eq:coprod-id}
		\Delta(\KM{}) = J^{-1} \cdot (1\ten\KM{}) \cdot \RM{}^{\psi} \cdot (\KM{}\ten 1).
	\end{align}
	Moreover, the following (parameter-independent) generalized reflection equation holds:
	\begin{align}\label{eq:tw-RE}
	\RM{21}^{\psi\psi} \cdot (1\ten\KM{}) \cdot \RM{}^{\psi} \cdot (\KM{}\ten 1)
	= (\KM{}\ten 1) \cdot (\RM{}^{\psi})_{21} \cdot (1\ten\KM{}) \cdot \RM{}.
\end{align}
\end{theorem}

The evaluation of $\KM{}$ on $V\in\Oint$ yields a QSP intertwiner 
\begin{equation*}
	\KM{V}:V\to\twistrep{V}{\psi}
\end{equation*} 
where $\twistrep{V}{\psi}$ denotes the pullback of $V$ via $\psi$.
We shall refer to the automorphism $\psi$ as the \emph{twisting operator} of the reflection equation.

\begin{remark}
		We call the operator $\QK{}$ {the quasi-K-matrix} of $\Uqk$, although this terminology is used in \cite{BW18, BK19} for the operator ${\mathfrak{X}}=\ol{\QK{}}$. 
		In \cite{AV22}, we provide a more general construction of the operator $\QK{}$, which does not rely on the existence of the so-called bar involution for $\Uqk$, see \cite{BK15, BW18, ES18, BK19}. 
		In fact, $\QK{}$ can be used to \emph{define} this involution, as later observed in \cite{Kol22}.
\rmkend
\end{remark}

Theorem~\ref{thm:av-k-mx} shows that the quasi-K-matrix $\QK{}$ is itself a universal solution of \emph{a} generalized reflection equation and that a family of compatible universal K-matrices, twisting operators and Drinfeld twists can be produced by simultaneously ``gauge-transforming'' $\psi$, $J$ and $K$ in terms of a suitable element $\gau$. 
The choice of gauge is relevant for applications (\eg \cite{AP22}), since the generalized reflection equation \eqref{eq:tw-RE} depends on $\psi$.

\begin{example}\label{rmk:distinguished}
	We describe some distinguished examples of $\gau$.
	
	\begin{enumerate}\itemsep0.25cm
		\item \label{rmk:distinguished:1} 
		Gauging by $\gau={\bm \beta}$, where ${\bm \beta}$ is any map $\aP\to\bsF^{\times}$, 
		yields \emph{diagonal} corrections.
		In this case $K$ acts on modules in $\Oinf$. 
		In particular, if $\gau = 1$, the resulting universal K-matrix, called \emph{standard K-matrix}, satisfies the intertwining identity 
		$K \cdot b = \theta_q^{-1}(b) \cdot K$ for all $b \in \Uqk$.
		
		\item \label{rmk:distinguished:2} 
		For $\gau=\bt{\tsat}$, the twisting operator reduces to the involutive bialgebra isomorphism $\psi=\chev\circ\tau: \Uqag \to \Uqag^{\sf cop}$ 
		and the Drinfeld twist $J = 1 \ten 1$ is trivial. 
		The corresponding K-matrix, which we refer to as the \emph{semi-standard K-matrix}, 
		was also considered (up to conventions) in \cite[Thm.~7.5]{BK19} and \cite[Thm.~3.15]{BW21}

		\item \label{rmk:distinguished:3} 
		More generally, let $(Y,\eta)\in\gsat{\aff{A}}$ be a generalized Satake diagram and $\zsat$ the corresponding pseudo-involution. 
		For $\gau=\bt{\zsat}^{-1}\bt{\tsat}$, one recovers the combinatorial family of universal K-matrices constructed in \cite{AV22}.
		Among these, the standard K-matrix corresponds to the choice $\zsat=\tsat$, i.e. $(Y,\eta) = (X,\tau)$, 
		and the semi-standard K-matrix to the case $\zsat = \om$ with $(Y,\eta) = (\emptyset, \id)$.
		The finite-type K-matrix constructed by Balagovi\'{c} and Kolb in \cite{BK19} corresponds instead to the choice $\zsat = \id$, 
		given by $(Y,\eta) = (\fIS,\oi_{\fIS})$, which is available only in finite type. 
		In analogy with this last choice, in Section~\ref{s:recover-RE}, we will discuss the choice with $Y=\aIS\setminus\{0,\tau(0)\}$ and 
		$\eta \in \Aut(\aff{A})$ is defined by $\eta(0)=\tau(0)$, and $\eta|_{Y}=\oi_{Y}$. \hfill\rmkend
	\end{enumerate}
\end{example}

The coproduct identity \eqref{eq:coprod-id} and the generalized reflection equation \eqref{eq:tw-RE} admit a similar representation-theoretic interpretation. 
First, note that, for any weight $\Uqag$-module $M$ and $W\in\Oinf$, the operator $\RM{MW}$ is always well-defined, whilst this is not true for $\RM{WM}$.
Recall that any algebra automorphism of $\Uqag$ preserves $\Uqah$ and thus its pullback preserves weight modules\ourcomment{It is enough to observe that the only invertible elements are in $\Uqah$. More precisely, for any algebra automorphism $\phi$ we must have $\phi(K_h)=\pm K_{\Phi(h)}$ for some group automorphism $\Phi:\aQvext\to\aQvext$.  Note however that type $\bf 1$ representations are not necessarily preserved.}.
Therefore, $\twistrep{V}{\psi}$ is still a weight module and $\RM{\twistrep{V}{\psi}W}$ is well-defined.
Moreover, we have $\RM{21}^{\psi\psi} = J_{21} \cdot \RM{}\cdot J^{-1}$\,. Thus, the operator $\RM{\twistrep{V}{\psi}\twistrep{W}{\psi}}$ is also well-defined and the two sides of \eqref{eq:tw-RE} act naturally on tensor products in $\Oint$.

\begin{remark}
In terms of braid group actions, the generalized reflection equation yields an action of a cylindrical ribbon braid {\em groupoid}, since as mentioned above the operators involved cannot be arbitrarily composed. 
Unlike the situation for the quantum group $U_q(\fkg)$, pulling back by a twisting operator $\psi$ of the form given by \eqref{twistmapandDrinfeldtwist:formula} does not preserve the category $\Oint$.
However, for finite-dimensional $\UqLg$-modules, in special cases, this action does yield a representation of the cylindrical braid group, \eg if $\twistrep{V}{\psi}=V$ and $\twistrep{W}{\psi}=W$ (see~Section~\ref{s:recover-RE}). \rmkend
\end{remark}


\section{Spectral K-matrices}\label{s:spectral}

In this section, we present the first main result of the paper.
Let $(X,\tau)\in\gsat{\aff{A}}$ with pseudo-involution $\tsat$, $(\parc{},\pars{})\in\Parsetc\times\Parsets$, and $\Uqk\subset\Uqagp$ the  corresponding QSP subalgebra. 
We prove that, under mild assumptions, the universal K-matrices constructed in \cite{AV22} and in Theorem~\ref{thm:av-k-mx} specialize to spectral operators on finite-dimensional $\UqLg$-modules. 

\subsection{$\tau$-minimal grading shifts}\label{ss:tau-grading}
We shall need to replace the homogeneous grading shift \eqref{eq:grading-shift}, most commonly used in the context of 
quantum loop algebras, with a distinguished 
$\tau$-invariant grading shift. Note that for any group homomorphism $s: \aQ \to \Z$, there is 
a grading shift $\Sigma^s_z:\UqLg\to\UqLg[z,z^{-1}]$ 
\ourcomment{We assume clear that $\shift{z}$ is then 
	extended by linearity to $\UqLg[z,z^{-1}]$. }
defined by 
\begin{equation*}
	\Sigma^s_z(E_i) = z^{s(\al_i)} E_i, \qq \Sigma^s_z(F_i) = z^{-s(\al_i)} F_i, \qq \Sigma^s_z(K_h) = K_h
\end{equation*}
for $i \in \wh I$, $h \in \aQv$. Then, $\Sigma^s_z$ is \emph{$\tau$-invariant} if $s \circ \tau = s$ and therefore $\Sigma^s_z \circ \tau = \tau \circ \Sigma^s_z$. Note that $\Sigma^s_z$ is $\tau$-invariant if and only if, as a function on the set of affine simple roots, $s$ is the characteristic function of a union of $\tau$-orbits. For instance, the principal grading shift $\shift{z}^{\sf pr}$ corresponding to $s(\rt{i})=1$ ($i\in\aIS$) is always $\tau$-invariant.
The \emph{$\tau$-minimal} grading shift $\Sigma^{s_\tau}_z$ corresponds instead to the characteristic function $s_\tau$ of the $\tau$-orbit of the affine node $0$, \ie
\begin{equation*}
	s_\tau(\al_i) = \begin{cases}
		1 & \text{if } i \in \{ 0,\tau(0) \}, \\
		0 & \text{otherwise}\,.
	\end{cases}
\end{equation*}
Note that $\shift{z}^{s_\tau} = \shift{z}^{\sf hom}$ if and only if $\tau(0)=0$.
Moreover, the analogue of Theorem~\ref{thm:spectral-R} holds if we replace $R(z)$ by $(\id \ten \Sigma^{s_\tau}_z)(R)$ and $\Delta_z$ by $(\id\ten\Sigma^{s_\tau}_z) \circ \Delta$.\\

Henceforth, we shall only use the $\tau$-minimal grading shift $\Sigma_z^{s_\tau}$ and drop the upper index $s_{\tau}$ unless needed. 
In particular, we shall denote by $\pi_{V,z}$ the action on $V$ shifted by $\Sigma_z^{s_{\tau}}$.

\subsection{Spectral K-matrices}\label{ss:spectral-k}

Let $\GQSP\subset\cG$ be the subset of gauge transformations $\gau\in\cG$ 
of the form $\gau\coloneqq S_Y^{-1}S_X {\bm \beta}^{-1}$, where 

\begin{enumerate}\itemsep0.25cm
	\item $Y\subset\aff{I}$ is any proper subdiagram such that $s(\al_i)=0$ for any $i\in Y$;
	\item ${\bm \beta}:\aP\to\bsF^\times$ is any map such that $\parc{}(\drv{}){\bm \beta}(\drv{})=1$.
\end{enumerate}

The definition of $\GQSP$ is motivated by the results of the next section, 
see~Remark~\ref{rmk:rational:2}.
In Section~\ref{ss:univ-kmx}, we considered the twisting operators of the form 
\begin{equation}\label{psi:QSPadmissible}
	\psi=\Ad(\gau)\circ\tsat_q^{-1}
\end{equation}
where $\tsat_q$ is defined by \eqref{def:thetaq} and $\gau\in\cG$. 
The twisting operator $\psi$ is  {\em QSP-admissible} whenever $\gau\in\GQSP$. 

Henceforth, we assume that $\psi$ is a QSP-admissible twisting operator.
We have the following spectral analogue of Theorem~\ref{thm:av-k-mx}.

\begin{theorem}\label{thm:spectral-k}
	The quantum loop algebra $\UqLg$ has a $\GQSP$-family of universal \emph{spectral} K-matrices relative to the QSP subalgebra $\Uqk$. 
	More precisely, for any $\gau\in\GQSP$, set
	\begin{equation*}
		\psi \coloneqq \Ad(\gau) \circ \tsat_{q}^{-1} \, , \qq\mbox{and} \qq 
		J \coloneqq (\gau\ten\gau) \cdot \RM{\theta} \cdot \Delta(\gau)^{-1}.
	\end{equation*}
	There is a canonical Laurent series $\sKM{}{z}\in\Lfml{\CUqLg{}}{z}$
	such that $\shift{a}(\sKM{}{z})=\sKM{}{az}$ ($a\in\bsF^\times$) and the following properties hold.
	\vspace{0.25cm}
	\begin{enumerate}\itemsep0.25cm
		\item For any $b\in\Uqk$, 
		\begin{align}\label{eq:spectral-k-intertwiner}
			\sKM{}{z}\cdot\shift{z}(b)=\psi(\shift{1/z}(b))\cdot\sKM{}{z}\, .
		\end{align}
		\item 
		Set $\sRM{}{z}^{\psi} \coloneqq (\psi\ten \id)(\sRM{}{z}) $. 
		Then,
		\begin{align}\label{eq:spectral-k-coproduct}
			\Delta_{w/z}(\sKM{}{z})=J^{-1}\cdot (1\ten\sKM{}{w}) \cdot\sRM{}{zw}^{\psi} \cdot (\sKM{}{z} \ten 1)\, .
		\end{align}
	\end{enumerate}
	\vspace{0.25cm}
	Moreover, $\sKM{}{z}$ is a solution of the generalized reflection equation 
	\begin{equation}\label{eq:spectral-tw-RE}
	\begin{split}
	\sRM{}{w/z}_{21}^{\psi\psi} \cdot (1\ten\sKM{}{w})& \cdot \sRM{}{zw}^{\psi} \cdot (\sKM{}{z}\ten 1)=\\
	&= (\sKM{}{z}\ten 1) \cdot \sRM{}{zw}_{21}^{\psi} \cdot (1\ten\sKM{}{w}) \cdot \sRM{}{w/z}
	\end{split}
	\end{equation}
	with $\sRM{}{z}_{21}^{\psi} \coloneqq (\psi\ten\id)(\sRM{}{z})_{21}$.
\end{theorem}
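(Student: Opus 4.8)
The plan is to obtain $\sKM{\psi}{z}$ by applying the $\tau$-minimal grading shift $\shift{z}$ to the constant universal K-matrix $\KM{\psi}=\gau\cdot\Parc^{-1}\cdot\QK{\tsat}$ of Corollary~\ref{cor:av-k-mx}, and then to transport the intertwining identity, the coproduct identity and the reflection equation from the constant setting; thus I would set $\sKM{\psi}{z}\coloneqq\shift{z}(\KM{\psi})$, with $\shift{z}$ extended to the completion $\CUqag{}$ via its pullback action on representations. The first task is to show that this defines an element of $\Lfml{\CUqLg{}}{z}$. Since $\QK{\tsat,\mu}\in\Uqanp_\mu$ one has $\shift{z}(\QK{\tsat,\mu})=z^{s_\tau(\mu)}\QK{\tsat,\mu}$, while $\shift{z}$ fixes the remaining Cartan/braid-group factors entering $\gau$ and $\Parc^{-1}$ (more on this below). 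The key finiteness observation is that, for a fixed $V\in\Repfd(\UqLg)$ and a fixed power $z^n$, only finitely many $\mu$ contribute: writing $\mu=\mu_{\fin}+k\drv{}$ with $\mu_{\fin}\in\Qp^{-\tsat}$ and $k\geqslant 0$ (Remark~\ref{rmk:KM:pseudoinvolution}), the image of $\mu_{\fin}$ in $\aPd$ must be a difference of weights of $V$ — hence lies in a finite set and determines $\mu_{\fin}$, as the finite root lattice $\Qlat$ embeds into $\aPd$ — and then $s_\tau(\mu)=s_\tau(\mu_{\fin})+k\,s_\tau(\drv{})$ with $s_\tau(\drv{})\geqslant 1$ (as $\drv{}$ has positive coefficient at the affine node $0$) pins down $k$. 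Hence $\pi_{V,z}(\sKM{\psi}{z})\in\fml{\End(V)}{z}$ is well defined; it is canonical as the image of the canonical $\KM{\psi}$, and $\shift{a}(\sKM{\psi}{z})=\shift{a}\shift{z}(\KM{\psi})=\shift{az}(\KM{\psi})=\sKM{\psi}{az}$ is immediate.

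All three identities then follow by applying grading shifts to the identities of Corollary~\ref{cor:av-k-mx}, the crucial ingredient being the compatibility $\shift{z}\circ\psi=\psi\circ\shift{1/z}$ on $\UqLg$. This rests on: the $\tau$-invariance of $s_\tau$, so that $\shift{z}\circ\tau=\tau\circ\shift{z}$ — precisely the reason for replacing the homogeneous shift by the $\tau$-minimal one; the fact that the Chevalley involution $\chev$ reverses the grading, $\shift{z}\circ\chev=\chev\circ\shift{1/z}$; and the fact that $\shift{z}$ commutes, up to an inessential diagonal correction that is absorbed consistently, with the braid-group factors in $\bt{\tsat}$ and $\gau$ (here one uses that $\shift{z}(\qWS{i})$ differs from $\qWS{i}$ only by a Cartan operator, and that the nodes of $Y$ carry $s_\tau=0$). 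Together with the elementary identity $(\shift{a}\ten\shift{b})(R)=\sRM{}{b/a}$, read off from $R=q^{\wh{\Omega}_0}\sum_\mu\Xi_\mu$ with $\Xi_\mu\in\Uqanm_{-\mu}\ten\Uqanp_\mu$, and with the compatible behaviour of the finite-type factor $\RM{\psi}$ under the shift (being, up to Cartan correction, the R-matrix of $\Uqg_X$), everything drops out. For \eqref{eq:spectral-k-intertwiner}, apply $\shift{z}$ to \eqref{eq:k-intertwiner}: $\sKM{\psi}{z}\,\shift{z}(b)=\shift{z}(\KM{\psi}b)=\shift{z}(\psi(b)\KM{\psi})=\psi(\shift{1/z}(b))\,\sKM{\psi}{z}$. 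For \eqref{eq:spectral-k-coproduct}, note that $\shift{z}$ is a Hopf algebra morphism, so $\Delta_{w/z}(\sKM{\psi}{z})=(\id\ten\shift{w/z})\Delta(\shift{z}(\KM{\psi}))=(\shift{z}\ten\shift{w})(\Delta(\KM{\psi}))$; applying $\shift{z}\ten\shift{w}$ to the right-hand side of \eqref{eq:coprod-id} sends $1\ten\KM{\psi}\mapsto 1\ten\sKM{\psi}{w}$, $\KM{\psi}\ten 1\mapsto\sKM{\psi}{z}\ten 1$, $\RM{\psi}\mapsto\RM{\psi}$, and $\RM{}^\psi=(\psi\ten\id)(R)\mapsto(\psi\ten\id)\big((\shift{1/z}\ten\shift{w})(R)\big)=(\psi\ten\id)(\sRM{}{zw})=\sRM{}{zw}^\psi$, yielding exactly \eqref{eq:spectral-k-coproduct}.

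Finally, \eqref{eq:spectral-tw-RE} is obtained by applying $\shift{z}\ten\shift{w}$ to the constant reflection equation \eqref{eq:tw-RE}, tracking the flips carefully: one finds $(\shift{z}\ten\shift{w})(\RM{21}^{\psi\psi})=[(\psi\ten\psi)((\shift{1/w}\ten\shift{1/z})(R))]_{21}=\sRM{}{w/z}^{\psi\psi}_{21}$, $(\shift{z}\ten\shift{w})((\RM{}^\psi)_{21})=\sRM{}{zw}^{\psi}_{21}$, $(\shift{z}\ten\shift{w})(\RM{})=\sRM{}{w/z}$, while the K-matrix factors transform as before. I expect the main obstacle to be not this bookkeeping but the formal framework in which \eqref{eq:spectral-tw-RE} lives: unlike \eqref{eq:spectral-k-intertwiner}–\eqref{eq:spectral-k-coproduct}, which take values in power series in $z$ (respectively in $z,w$), equation \eqref{eq:spectral-tw-RE} juxtaposes $\sRM{}{zw}$, a power series in $zw$, with $\sRM{}{w/z}$, which carries arbitrarily negative powers of $z$, so these a priori lie in different formal completions and one must first make sense of the composites — either by passing to a suitably localised completion of the completed tensor square of $\UqLg$ over $\bsF((z))((w))$, or, after evaluating on $V,W\in\Repfd(\UqLg)$, by invoking rationality/meromorphy of the spectral R-matrices (Theorem~\ref{thm:rational-R}, Remark~\ref{rmk:Runitarity}) so that all terms become $\End(V\ten W)$-valued rational (respectively meromorphic) functions of $(z,w)$, whereupon \eqref{eq:spectral-tw-RE} reduces to the constant equation \eqref{eq:tw-RE} at a generic point. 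Beyond this, the points I would spell out in full are the coefficient-wise convergence of the products in \eqref{eq:spectral-k-coproduct} and \eqref{eq:spectral-tw-RE} in the chosen completion, and the precise verification that the $\tau$-minimal shift interacts correctly with the braid-group operators entering $\bt{\psi}$ and $\RM{\psi}$.
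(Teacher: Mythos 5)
Your proposal follows the same route as the paper: define $\sKM{\psi}{z}\coloneqq\shift{z}(\KM{\psi})$, prove that it descends coefficient-wise to finite-dimensional representations, establish $\psi\circ\shift{z}=\shift{1/z}\circ\psi$ from the $\tau$-invariance of $s_\tau$, the shift-invariance of $\gau\cdot\bt{\tsat}$ and the grade-reversal of $\chev$, and then transport \eqref{eq:k-intertwiner}, \eqref{eq:coprod-id}, \eqref{eq:tw-RE} by applying $\shift{z}\ten\shift{w}$; your bookkeeping of the twisted R-matrix factors and your concern about the formal home of \eqref{eq:spectral-tw-RE} (handled in the paper via the field $\Lfml{\bsF}{w/z,z}$, equivalently the adapted coordinates $u=w/z$, $v=z$) both match the paper. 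The one genuinely different ingredient is the convergence argument: the paper (Proposition~\ref{prop:descent}(4)) proves descent for an arbitrary element of $\wh{\bigoplus}_{\mu\in\aQp}\Uqanp_\mu$ by using that $\Uqnp$ acts locally finitely on $V$ and that the number of occurrences of $\Eg{0}$ (or of any node with $\shifta{i}\neq0$) in a contribution to $z^n$ is bounded by $n$, whereas you exploit the specific support of the quasi-K-matrix, observing that the classical weight of $\QK{\tsat,\mu}$ must lie in the finite set $\Supp(V)-\Supp(V)$ and that $s_\tau(\drv{})\geqslant 1$ then pins down the imaginary part of $\mu$; both arguments are valid, yours being slightly more tailored and the paper's more general. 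Two small points you leave implicit but which the paper makes explicit in Proposition~\ref{prop:descent}(1),(3): the Cartan correction $\tcorr{\tsat}$ descends only because $\tsat(c)=-c$ kills the $c$-$d$ cross terms, and the diagonal factor ${\bm\beta}\cdot\Parc^{-1}$ acts on a type $\mathbf{1}$ representation (graded by $\aPd$ rather than $\aP$) only thanks to the $\GQSP$ condition ${\bm\beta}(\drv{})=\parc{}(\drv{})$; these are exactly the checks hiding behind your phrase that the remaining Cartan/braid factors are handled, so you should spell them out, but they do not affect the correctness of your approach.
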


\begin{remark}\label{rmk:spectral-k}\hfill
	\begin{enumerate}[leftmargin=2em]\itemsep0.25cm
		\item \label{rmk:spectral-k:1}
		The identities \eqref{eq:spectral-k-coproduct} and \eqref{eq:spectral-tw-RE} hold in $\Lfml{\CUqLg{2}}{w/z,z}$, 
		where $\Lfml{\bsF}{w/z,z}\coloneqq\operatorname{Frac}(\fml{\bsF}{w/z,z})$.
		Following \cite[Eq.~(10)]{Che84}, it may be convenient to use an {adapted} set of coordinates, given by $u=w/z$ and $v=z$. 
		Then, \eqref{eq:spectral-k-intertwiner} and \eqref{eq:spectral-k-coproduct} read
		\begin{align*}
			\Delta_{u}(\sKM{}{v})=J^{-1}\cdot (1\ten\sKM{}{uv}) \cdot \sRM{}{uv^2}^{\psi} \cdot (\sKM{}{v}\ten 1) \, 
		\end{align*}
		and
		\begin{align*}
			\sRM{}{u}_{21}^{\psi\psi} \cdot (1\ten\sKM{}{v}) \cdot& \sRM{}{uv^2}^{\psi} \cdot (\sKM{}{uv}\ten 1) = \\
			&=
			(\sKM{}{uv} \ten 1) \cdot \sRM{}{uv^2}_{21}^{\psi} \cdot (1\ten\sKM{}{v}) \cdot\sRM{}{u}\, 
		\end{align*}
		in $\Lfml{\CUqLg{\ten 2}}{u,v}$.
		\item \label{rmk:spectral-k:2}
		By definition, every element in $\GQSP$ has the form $\gau=S_Y^{-1}S_X{\bm\beta}^{-1}$ for some $Y\subsetneq\aIS$ such that $\shift{z}(S_Y)=S_Y$ and ${\bm\beta}:\Plat\to\bsF^{\times}$  (\cf~Section~\ref{ss:spectral-k}).
		Therefore $\gau$ is shift-invariant if and only if $S_X$ is shift-invariant.
		In this case, the K-matrix 
		$\sKM{}{z}$ is a formal series in $\fml{\CUqLg{}}{z}$.
		\item \label{rmk:spectral-k:3}
		The QSP subalgebra of $U_q(L\fksl_2)$ with $(X,\tau)=(\emptyset,(01))$, see Example \ref{exam:sl2GSat}, is known as the \emph{augmented q-Onsager algebra}. 
		In \cite[Sec.~4.1.2]{BT18}, so-called \emph{generic K-operators} in a completion of $U_q(\wh\fkh)(z)$ are considered.
		It would be interesting to connect these to the spectral K-matrices considered here.
		\rmkend
	\end{enumerate}
\end{remark}

In analogy with the case of the R-matrix, the spectral K-matrix is obtained by applying the shift operator to the universal K-matrix $\KM{}$ from Theorem~\ref{thm:av-k-mx}, \ie $\sKM{}{z}\coloneqq\shift{z}(\KM{})$.
Namely, the identity \eqref{eq:spectral-k-intertwiner} is recovered from \eqref{eq:k-intertwiner} by applying the shift operator $\shift{z}$.
Similarly, the identities \eqref{eq:spectral-k-coproduct}, and \eqref{eq:spectral-tw-RE} are recovered from \eqref{eq:coprod-id} and \eqref{eq:tw-RE}, respectively, by applying the shift operator $\shift{z}\ten\shift{w}$. 
Clearly, since the operator $\sKM{}{z}$ is valued in $\CUqLg{}$, the statements above are to be interpreted as operators on finite-dimensional modules in $\ModfdUqLg$.
Therefore, it is necessary to prove that $\sKM{}{z}$ gives rise to a well-defined element $\sKM{V}{z}\in\Lfml{\End(V)}{z}$ for any $V\in\ModfdUqLg$. 
The proof of Theorem~\ref{thm:spectral-k} is carried out in Sections~\ref{ss:k-fd-rep}-\ref{ss:k-fd-rep-2}.

\subsection{Descent to finite-dimensional modules}\label{ss:k-fd-rep}
The first step in the proof of Theorem~\ref{thm:spectral-k} amounts to proving that, 
for any $V\in\ModfdUqLg$ with action $\pi_V:\UqLg\to\End(V)$, we obtain a 
well-defined operator 
\begin{equation}
	\sKM{V}{z}\coloneqq\pi_{V,z}(\KM{})=\pi_{V}\circ\shift{z}(\KM{})
	\in\Lfml{\End(V)}{z}\,.
\end{equation}
More precisely, we shall prove that each of the operators involved in the definition of the universal K-matrix \eqref{eq:universal-K-matrix} and in the coproduct identity \eqref{eq:spectral-k-coproduct} descends to one on any finite-dimensional $\UqLg$-module.

\begin{prop}\label{prop:descent}
	The following operators in $\CUqag{}$ descend to $\CUqLg{}$:
	\vspace{0.25cm}
	\begin{enumerate}\itemsep0.25cm
		\item 
		the operator $\tcorr{\tsat}$, defined on any weight vector $v$ of weight $\lambda$ by
		\begin{equation}
			\tcorr{\tsat}(v)=q^{\iip{\tsat(\lambda)}{\lambda}/2+\iip{\lambda}{\rho_X}}\cdot v\, ,
		\end{equation}
		where $\rho_X$ is the half-sum of the positive roots of $\ag_X$ (\cf~ Section~\ref{ss:qtheta});

		\item 
		for any $i\in\aIS$, the braid group operator $\qWS{i}$;

		\item 
		for any map $\bm{\phi}:\aP\to\bbF^\times$ such that $\bm{\phi}(\drv{})=1$, 
		the diagonal operator defined on any weight vector $v$ of weight $\lambda$ by $\bm{\phi}(v)\coloneqq\bm{\phi}(\lambda)\cdot v$.
	\end{enumerate}	
	Moreover, 
	\begin{enumerate}
		\item[(d)] 
		for any $\Psi\in\wh{\bigoplus}_{\mu\in\aQp}\Uqanp_{\mu}$, $\shift{z}(\Psi)$ descends to $\fml{\CUqLg{}}{z}$.
	\end{enumerate}
\end{prop}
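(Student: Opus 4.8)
The plan is to dispatch items (1)--(3) by one common mechanism and to treat (4) separately, the latter being the only convergence statement and hence the substantive part. The mechanism for (1)--(3) rests on the observation that a finite-dimensional type-$\mathbf 1$ representation of $\UqLg$ is, regarded as a $\Uqagp$-module, integrable and annihilated by $\Kg{c}-1$, so its weights lie in the quotient lattice $\aPd$ (cf.~\ref{ss:QL-representations}). An operator in $\CUqag{}$ that acts diagonally on weight spaces therefore descends to $\CUqLg{}$ as soon as the scalar it attaches to a weight $\lambda\in\aP$ depends only on the image of $\lambda$ in $\aPd$; naturality with respect to intertwiners is then automatic, as these preserve weight spaces.

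Item (3) is immediate: $\bm{\psi}(\drv{})=1$ gives $\bm{\psi}(\lambda+k\drv{})=\bm{\psi}(\lambda)$ for all $k\in\bbZ$. For (1), I would compute how the exponent $\iip{\tsat(\lambda)}{\lambda}/2+\iip{\lambda}{\rho_X}$ changes upon replacing $\lambda$ by $\lambda+k\drv{}$. Since $\tsat$ preserves the invariant bilinear form with $\tsat(\drv{})=-\drv{}$ (Remark~\ref{rmk:KM:pseudoinvolution}), and $\iip{\drv{}}{\rt{i}}=0$ for all $i\in\aIS$ --- so $\iip{\drv{}}{\mu}=0$ for every $\mu\in\aQ$, in particular $\iip{\drv{}}{\rho_X}=0=\iip{\drv{}}{\drv{}}$ --- a short computation shows the exponent changes exactly by $-k\iip{\lambda}{\drv{}}=-k\,\lambda(c)$. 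On a type-$\mathbf 1$ finite-dimensional $\UqLg$-module $c$ acts as $0$, so $\lambda(c)=0$ and the scalar is unchanged; hence $\tcorr{\tsat}$ descends. For (2), I would recall that the Lusztig braid operator $\qWS{i}=T''_{i,1}$ is defined on any integrable $\Uqag$-module by its explicit locally finite formula in the divided powers of $\Eg{i},\Fg{i}$ and the weight grading; a finite-dimensional type-$\mathbf 1$ $\UqLg$-module being integrable as a $\Uqagp$-module, $\qWS{i}$ acts on it and commutes with $\UqLg$-intertwiners (which commute with $\pi(\Eg{i}),\pi(\Fg{i})$ and preserve weight spaces), so it descends.

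The core of the proposition is (4), which I expect to be the only real obstacle. Writing $\Psi=\sum_{\mu\in\aQp}\Psi_\mu$ with $\Psi_\mu\in\Uqanp_\mu$ and using that $\shift{z}$ rescales each $\Eg{i}$ by $z^{s_\tau(\rt{i})}$, one gets $\shift{z}(\Psi_\mu)=z^{s_\tau(\mu)}\Psi_\mu$ with $s_\tau(\mu)\geqslant0$, so for $V\in\Repfd(\UqLg)$ the $z^n$-coefficient of $\pi_V(\shift{z}(\Psi))$ is $\sum_{\mu\in\aQp,\,s_\tau(\mu)=n}\pi_V(\Psi_\mu)$. I would prove each such coefficient is a finite sum --- whence $\pi_V(\shift{z}(\Psi))\in\fml{\End(V)}{z}$ --- by combining two facts: first, $\pi_V(\Psi_\mu)\neq0$ forces the image of $\mu$ in $\aPd$ to be a difference of two weights of $V$, of which there are finitely many as $V$ is finite-dimensional; second, by the definition of $\aPd$ any two elements of $\aQp$ with the same image in $\aPd$ differ by an integer multiple of $\drv{}$, and since $s_\tau(\drv{})>0$ (indeed $s_\tau(\drv{})$ is $1$ if $\tau(0)=0$ and $2$ otherwise, using $a_0=a_{\tau(0)}=1$) the function $s_\tau$ is strictly monotone along each such fibre and attains the value $n$ there at most once. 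Together these bound the number of contributing $\mu$ by the number of weight-differences of $V$. Naturality in $V$ being clear, $\shift{z}(\Psi)$ descends to $\fml{\CUqLg{}}{z}$. This argument is the analogue, for the quasi K-matrix $\QK{\tsat}$, of the convergence mechanism underlying the spectral R-matrix of Theorem~\ref{thm:spectral-R}, and it is the one place where the positivity $s_\tau(\drv{})>0$ of the ($\tau$-minimal) grading shift --- shared by every grading shift considered in \ref{ss:tau-grading} --- is genuinely needed.
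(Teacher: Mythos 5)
Your proposal is correct and follows essentially the same route as the paper: items (1)--(3) are handled by checking that the relevant diagonal and braid operators are well defined on the $\aPd$-weight decomposition of type $\mathbf 1$ finite-dimensional modules (integrability for $\qWS{i}$), and item (4) by showing each $z^n$-coefficient of $\shift{z}(\Psi)$ acts on $V$ through a finite sum, using the finiteness of the weight support. The only differences are cosmetic: for (1) you verify $\delta$-shift invariance of the exponent directly where the paper rewrites $\tcorr{\tsat}$ as an explicit exponential whose $c$-dependent terms act trivially, and for (4) you count contributing $\mu$ via the fibres of $\aQp\to\aPd$ and the positivity $s_\tau(\delta)>0$ where the paper bounds the number of occurrences of $\Eg{0}$ (resp.\ $\Eg{\tau(0)}$) together with local finiteness -- the same mechanism in a slightly different packaging.
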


\begin{proof} \hfill
\begin{enumerate}[leftmargin=2em]\itemsep0.25cm
	\item
	In analogy with the operator $q^{\wh{\Omega}_0}$ from Section~\ref{ss:R-matrix}, one checks easily that 
	\begin{align}
		\tcorr{\tsat}=q^{\sum_{i\in\fIS} \tsat(u^i)u_i+m(\tsat(c)d+\tsat(d)c)-\rho^\vee_{X}}\, ,
	\end{align}
	where $m=1,2,3$ if $\g$ is of type $\mathsf{ADE}$, $\mathsf{BCF}$, or $\mathsf{G}$, respectively. 
	Since $\tsat(c)=-c$, the term $m(\tsat(c)d+\tsat(d)c) = m(\tsat(d)-d)c$ acts as 0 and can be ignored.
	Therefore, as in the case of the operator $q^{\wh{\Omega}_0}$ in Section~\ref{ss:spectral-R}, $\tcorr{\tsat}$
	descends to an operator in $\CUqLg{}$. Note that $\tcorr{\tsat}(\drv{})=1$.	
	
	\item
	By restriction, $V$ is a finite-dimensional representation of $U_q(\wh\g)_{\{i\}}$ and therefore integrable.
	In particular, the action of $\qWS{i}$ on $V$ is well-defined.

	\item	
	By definition, any type $\bf{1}$ $\UqLg$-module $V$ admits a weight 
	decomposition over the quotient lattice $\aPd$, \ie $V=\bigoplus_{\lambda\in\aPd} \wsp{V}{\lambda}$.
	Therefore, the operator $\bm{\phi}$ acts on $V$ if and only if 
	it factors through $\aPd$, \ie if $\bm{\phi}(\drv{})=1$.
	
	\item 
	Set $\Psi=\sum_{\mu\in\aQp}\Psi_{\mu}$, with $\Psi_{\mu}\in\Uqanp_{\mu}$.
	Then, $\shift{z}(\Psi)=\sum_{n\geqslant0}\Psi(n)z^n$ where $\Psi(n)\coloneqq\sum_{\mu\in s^{-1}(n)}\Psi_{\mu}$. 
	Fix $n\geqslant0$. 
	We shall prove\footnote{
		Note that, in the case of the principal grading shift (\cf~Section~\ref{ss:tau-grading}), $s^{-1}(n)$ is a finite set and the result is clear.
	} that $\Psi(n)$ is a well-defined operator on $V$. 
	There exists $i \in \aIS$ such that $s(\al_i) \neq 0$.
	Any $\mu\in s^{-1}(n)$ has the form $\mu=m\rt{i}+\lambda$, where $m\leqslant n$ and $\lambda\in {\sf Sp}_{\Z \ge 0} \{ \al_j \, | \, j \in \aIS \backslash \{ i \} \} \subset \aQp$. 
	However, the action of $\langle E_j \,\vert\, j \in \aIS \backslash \{i\} \rangle$ is locally finite on $V$ and the number of occurrences of $\Eg{i}$ is bounded by $n$. 
	Thus, the action of $\Psi_{\mu}$ on $V$ is nonzero only for finitely many $\mu\in s^{-1}(n)$ and $\Psi(n)$ is well-defined on $V$. \hfill \qedhere
\end{enumerate}
\end{proof}

Recall that, by Theorem~\ref{thm:av-k-mx} and \eqref{eq:universal-K-matrix}, we have $\KM{}=\gau \cdot\Parc^{-1}\cdot\QK{}$,
where $\gau\in\GQSP$ is a gauge, $\Parc{}$ is  the parameter operator, and 
$\QK{}$ is the quasi-K-matrix. 
Note that, by definition of $\GQSP$, $\shift{z}(\gau\cdot\Parc{}^{-1})$ is an element of $\CUqLg{}[z,z^{-1}]$.
Therefore, by the result above, $\sKM{}{z}\coloneqq\shift{z}(\KM{})\in\Lfml{\CUqLg{}}{z}$, \ie for any $V\in\ModfdUqLg$, $\sKM{V}{z}\coloneqq \pi_{V,z}(\KM{})$ is a well-defined operator in $\Lfml{\End(V)}{z}$.

\begin{remark}\label{rmk:special-formal-cases}
We have not yet used the fact that the grading shift is $\tau$-invariant and $\gau\cdot\bt{\tsat}$ is shift-invariant. 
These assumptions become crucial in the next step.
	\rmkend
\end{remark}


\subsection{Spectral K-matrices on finite-dimensional modules}\label{ss:k-fd-rep-2}
We complete the proof of Theorem~\ref{thm:spectral-k} by showing that the relations
\eqref{eq:spectral-k-intertwiner}, \eqref{eq:spectral-k-coproduct}, and \eqref{eq:spectral-tw-RE} hold.
Note that, since the grading shift is $\tau$-invariant and $\gau\in\GQSP$ is shift-invariant, 
we have $\tau\circ\shift{z}=\shift{z}\circ\tau$ and $\Ad(\gau\cdot\bt{\tsat})\circ\shift{z}=\shift{z}\circ\Ad(\gau\cdot\bt{\tsat})$.
Finally, since $\chev\circ\shift{z}=\shift{1/z}\circ\chev$ and $\psi=\Ad(\gau\cdot\bt{\tsat})\circ\chev\circ\tau$, we have
\begin{align}\label{eq:parameter-inversion}
	\psi\circ\shift{z}=\shift{1/z}\circ\psi\, .
\end{align}
Let $V\in\ModfdUqLg$. 
The action on the shifted representation $\Lfml{\twistrep{V}{\psi}}{1/z}$ is, by definition, given by
\begin{equation}\label{eq:pullback}
	\pi_{\twistrep{V}{\psi}, 1/z}(x)=\pi_{\twistrep{V}{\psi}}(\shift{1/z}(x))=\pi_V(\psi\circ\shift{1/z}(x))\, .
\end{equation}
Since $\pi_V(\psi\circ\shift{1/z}(x))=\pi_V(\shift{z}\circ\psi(x))$, one has
$\Lfml{\twistrep{V}{\psi}}{1/z}=\twistrep{\Lfml{V}{z}}{\psi}$.
Therefore, the evaluation of the intertwining identity \eqref{eq:k-intertwiner}
on $\Lfml{V}{z}$ through $\pi_{V,z}$ yields
\begin{align}\label{eq:spectral-k-intertwining-rep}
	\sKM{V}{z}\pi_{V,z}(b)=\pi_{\twistrep{V}{\psi}, 1/z}(b)\sKM{V}{z}\, .
\end{align}
It follows that $\sKM{V}{z}$ is a QSP intertwiner $\Lfml{V}{z}\to\Lfml{\twistrep{V}{\psi}}{1/z}$,
which is equivalent to \eqref{eq:spectral-k-intertwiner}.\\

Set $\sKM{VW}{z,w} \coloneqq (\pi_{V,z}\ten\pi_{W,w})(\Delta(\KM{}))$.
Then, the evaluation of the coproduct identity \eqref{eq:coprod-id} on $\Lfml{V}{z}\ten\Lfml{W}{w}$ through $\pi_{V,z}\ten\pi_{W,w}$ yields
\begin{align}\label{eq:spectral-k-coproduct-rep}
	\sKM{VW}{z,w} = J_{VW}^{-1}\cdot (\id\ten\sKM{W}{w}) \cdot \sRM{\twistrep{V}{\psi}W}{zw} \cdot (\sKM{ V}{z}\ten \id)\, ,
\end{align}
By \eqref{eq:pullback}, this is equivalent to \eqref{eq:spectral-k-coproduct}.
Finally, the evaluation of the generalized reflection equation \eqref{eq:tw-RE} on $\Lfml{V}{z}\ten\Lfml{W}{w}$ through $\pi_{V,z}\ten\pi_{W,w}$
yields
\begin{align}\label{eq:spectral-tw-RE-rep}
	& \sRM{W^{\psi}\, V^\psi}{w/z}_{21} \cdot (\id\ten\sKM{ W}{w}) \cdot \sRM{\twistrep{V}{\psi}W}{zw} \cdot (\sKM{ V}{z}\ten \id) = \\
	& \qquad \qquad = (\sKM{ V}{z} \ten \id) \cdot \sRM{\twistrep{W}{\psi}\, V}{zw}_{21} \cdot (\id\ten\sKM{W}{w}) 	\cdot \sRM{VW}{w/z}\, ,
\end{align}
where $\sRM{WV}{z}_{21}\coloneqq(1\, 2)\circ\sRM{WV}{z}\circ (1\, 2)$.
As before, this is equivalent to \eqref{eq:spectral-tw-RE}.

\section{Trigonometric K-matrices}\label{s:rational}
\summary{In this section,
	\begin{itemize}
		\item \ref{ss:rational-k}: trigonometric K-matrices (thm)
		\item \ref{ss:proof-rational-1}: proof of theorem part 1
		\item \ref{ss:proof-rational-2}: proof of theorem part 2
		\item \ref{ss:unitarity}: unitary K-matrices  for $\twistrep{V}{\psi^2}=V$
		\item \ref{ss:unitarity-bis}: unitary K-matrices for $\twistrep{V}{\psi}=V$
		\item \ref{ss:abelian-k-mx}: abelian K-matrices
	\end{itemize}
}

In this section, we prove that the spectral K-matrix constructed in Theorem~\ref{thm:spectral-k} gives rise to trigonometric solutions 
(\ie operator-valued rational functions in $z$) of the generalized reflection equation on irreducible $\UqLg$-modules. 

\subsection{Trigonometric K-matrices}\label{ss:rational-k}
\label{ss:rational-intertwiners}
The construction of the spectral K-matrix from Theorem~\ref{thm:spectral-k} immediately implies the existence of a trigonometric QSP intertwiner on any finite-dimensional $\UqLg$-module.\footnote{
	We are grateful to V. Toledano Laredo for pointing this argument out to us.
}

\begin{lemma}\label{lem:rational-intertwiner}
	Let $V\in\ModfdUqLg$. 
	There exists a QSP intertwiner $\rKM{V}{z}:\shrep{V}{z}\to\shrep{\twistrep{V}{\psi}}{1/z}$
	in $\End(V)(z)$.
\end{lemma}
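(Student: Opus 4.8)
The plan is to deduce the statement from Theorem~\ref{thm:spectral-k} together with an elementary base-change argument: the equations cutting out a QSP intertwiner form a finite linear system with coefficients in $\bsF(z)$, so that solvability over the larger field $\Lfml{\bsF}{z}=\bsF((z))$ already forces solvability over $\bsF(z)$. Indeed, Theorem~\ref{thm:spectral-k}, and in particular \eqref{eq:spectral-k-intertwining-rep}, hands us an operator $\sKM{\psi,V}{z}\in\fml{\End(V)}{z}$ satisfying $\sKM{\psi,V}{z}\cdot\pi_{V,z}(b)=\pi_{\psi^*(V),1/z}(b)\cdot\sKM{\psi,V}{z}$ for all $b\in\Uqk$; the only thing to do is to replace this \emph{formal} solution by a \emph{rational} one.

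\emph{Step 1: the intertwining system is defined over $\bsF(z)$.} The algebra $\Uqk$ is finitely generated --- say by the $\Eg{i},\Fg{i},\Kg{h}^{\pm 1}$ with $i\in X$, the elements $\Bg{i}$ with $i\in\aIS\setminus X$, and finitely many generators of $(U_q\ahp)^{\tsatq}$ --- and the intertwining property is multiplicative in $b$. Hence a $\bsF(z)$-linear endomorphism $\Phi$ of $V\ten\bsF(z)$ is a QSP intertwiner $\shrep{V}{z}\to\shrep{\psi^*(V)}{1/z}$ if and only if $\Phi\cdot\pi_{V,z}(b)=\pi_{\psi^*(V),1/z}(b)\cdot\Phi$ for each of these finitely many generators $b$. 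Since $\shift{z}$ multiplies each generator by an integer power of $z$, the matrix of $\pi_{V,z}(b)=\pi_V(\shift{z}(b))$ has entries in $\bsF[z,z^{-1}]$; and, using \eqref{eq:parameter-inversion} and the fact that $\psi$ is an automorphism of $\UqLg$, so does the matrix of $\pi_{\psi^*(V),1/z}(b)=\pi_V(\psi(\shift{1/z}(b)))=\pi_V(\shift{z}(\psi(b)))$, since $\psi(b)$ is now independent of $z$. Thus the above is a homogeneous linear system for the $(\dim V)^2$ entries of $\Phi$ with coefficient matrix over $\bsF[z,z^{-1}]\subseteq\bsF(z)$.

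\emph{Step 2: base change and conclusion.} Let $\cH\subseteq\End(V)\ten_\bsF\bsF(z)=\End(V)(z)$ be the solution space of this system over $\bsF(z)$; by construction its nonzero elements are precisely the nonzero rational QSP intertwiners $\shrep{V}{z}\to\shrep{\psi^*(V)}{1/z}$. For any field extension $L\supseteq\bsF(z)$, solving the same system over $L$ returns $\cH\ten_{\bsF(z)}L$, as the kernel of a matrix over a field commutes with base change. Apply this with $L=\Lfml{\bsF}{z}=\bsF((z))$, into which $\bsF(z)$ embeds via Laurent expansion at $z=0$. Then $\sKM{\psi,V}{z}\in\cH\ten_{\bsF(z)}\bsF((z))$, and it is nonzero: by \eqref{eq:universal-K-matrix} and the shift-invariance of $\gau\in\GQSP$ (and of the diagonal $\Parc$), one has $\sKM{\psi,V}{z}=\pi_V(\gau\cdot\Parc^{-1})\cdot\big(1+\sum_{0\neq\mu}z^{s_\tau(\mu)}\pi_V(\QK{\tsat,\mu})\big)$, whose specialization at $z=0$ is the product of the invertible operators $\pi_V(\gau)$, $\pi_V(\Parc^{-1})$ and the unipotent operator $1+\sum_{\mu\,:\,s_\tau(\mu)=0}\pi_V(\QK{\tsat,\mu})$. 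Hence $\cH\ten_{\bsF(z)}\bsF((z))\neq 0$, so $\cH\neq 0$, and any nonzero $\rKM{\psi,V}{z}\in\cH$ is the required rational QSP intertwiner.

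\emph{On the main difficulty.} There is essentially no obstacle at this stage: all the substance has already gone into Theorem~\ref{thm:spectral-k} --- the descent of the universal K-matrix $\KM{\psi}$ to a well-defined operator on finite-dimensional $\UqLg$-modules --- and the single delicate point above is that the intertwining system has \emph{Laurent-polynomial} coefficients, which is exactly where $\tau$-invariance of the grading shift and shift-invariance of $\gau$ (equivalently, \eqref{eq:parameter-inversion}) enter. What this argument does \emph{not} yield, and what is postponed to Sections~\ref{s:rational}--\ref{s:irreducibility} via the generic QSP irreducibility of $V$ (Theorem~\ref{thm:deformedN:irreducible}), is that $\dim_{\bsF(z)}\cH=1$; only then does it follow that $\rKM{\psi,V}{z}$ is canonical up to a scalar, that $\sKM{\psi,V}{z}=f_{\psi,V}(z)\,\rKM{\psi,V}{z}$ for a scalar Laurent series $f_{\psi,V}(z)$, and that $\rKM{\psi,V}{z}$ itself satisfies the generalized reflection equation \eqref{eq:spectral-tw-RE}.
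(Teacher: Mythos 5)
Your proposal follows essentially the same route as the paper: Theorem~\ref{thm:spectral-k} supplies a formal Laurent-series solution of the finite linear intertwining system \eqref{eq:spectral-k-intertwiner-V}, whose coefficients lie in $\bsF(z)$, and consistency over $\Lfml{\bsF}{z}$ then gives solvability in $\End(V)(z)$; the paper's proof is exactly this, stated more briefly. One small caveat in your nonvanishing check: a general $\gau\in\GQSP$ (e.g.\ when $X$ meets $\{0,\tau(0)\}$, so that $S_X$ is not shift-invariant) need not satisfy $\shift{z}(\gau)=\gau$ — that is precisely the special case singled out in Remark~\ref{rmk:spectral-k}(2) — so $\sKM{\psi,V}{z}$ may be a genuine Laurent series and ``specialization at $z=0$'' is not literally available; the fix is immediate, since $\sKM{\psi,V}{z}=\pi_{V,z}(\gau\Parc^{-1})\cdot\pi_{V,z}(\QK{\tsat})$ is a product of an invertible Laurent-polynomial matrix and a power series with invertible (unipotent-type) constant term, hence invertible, and in particular nonzero, over $\Lfml{\bsF}{z}$.
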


\begin{proof}
	By Theorem~\ref{thm:spectral-k} and Section~\ref{ss:k-fd-rep-2}, for any $V\in\ModfdUqLg$, 
	the element $\sKM{}{z}$ provides an intertwiner 
	\begin{equation}
		\sKM{V}{z}:\Lfml{V}{z}\to\Lfml{\twistrep{V}{\psi}}{1/z}\, . 
	\end{equation}
	This is equivalent to the existence of a solution $\sKM{ V}{z}\in\Lfml{\End(V)}{z}$
	of a finite system of linear equations, namely
	\begin{equation}\label{eq:spectral-k-intertwiner-V}
		\sKM{V}{z}\cdot\pi_{V,z}(b)=\pi_{\twistrep{V}{\psi}, 1/z}(b)\cdot\sKM{V}{z}\, ,
	\end{equation}
	where $b\in\Uqk$ runs over any finite set of generators of $\Uqk$. 
	Since the system is consistent and defined over $\bsF(z)$, it admits a solution in $\End(V)(z)$.
\end{proof}

It follows that, in the case of irreducible representations, 
the spectral K-matrix $\sKM{V}{z}$ is rational up to a nonzero scalar in $\Lfml{\bsF}{z}$.

\begin{theorem}\label{thm:rational-k}
	Let $V,W \in \ModfdUqLg$ be irreducible modules. 
	\begin{enumerate}\itemsep0.25cm
		\item \label{thm:rational-k:1D}
		The space of QSP intertwiners $\Lfml{V}{z}\to\Lfml{\twistrep{V}{\psi}}{1/z}$ is one-dimensional.
		\item \label{thm:rational-k:factorization}
		There exist a formal Laurent series $g_{V}(z)\in\Lfml{\bsF}{z}$ and a non-vanishing operator-valued polynomial $\rKM{V}{z}\in\End(V)[z]$, uniquely defined up to a scalar in $\bsF^\times$, such that
		\begin{align}\label{eq:rational-decomp-k}
			\sKM{V}{z}= g_{V}(z)\cdot\rKM{V}{z}\, .
		\end{align}
		\ourcomment{Possibly, one can normalize by setting $z^{{\sf ord}(0)}g(z)|_{z=0}=1$.}
		\item \label{thm:rational-k:RE}
		The operators $\rKM{V}{z}$ and $\rKM{W}{w}$ satisfy the generalized reflection equation in $\End(V\ten W)(z,w)$ 
		\begin{align}\label{eq:rational-tw-RE}
			& \rRM{\twistrep{W}{\psi}\, \twistrep{V}{\psi}}{\tfrac{w}{z}}_{21} \cdot (1\ten\rKM{ W}{w}) 
				\cdot \rRM{\twistrep{V}{\psi}W}{zw}\cdot (\rKM{V}{z}\ten 1) = \\
			& \qquad \qquad = (\rKM{V}{z}\ten 1) \cdot \rRM{\twistrep{W}{\psi}\, V}{zw}_{21} 
				\cdot (1\ten\rKM{W}{w}) \cdot \rRM{VW}{\tfrac{w}{z}}\, ,
		\end{align}
		where $\rRM{VW}{z}$ is the trigonometric R-matrix (see Section~\ref{ss:rational-R}), and
		\begin{equation*}
			\rRM{WV}{z}_{21}\coloneqq(1\, 2)\circ\rRM{WV}{z}\circ (1\, 2).
		\end{equation*}
	\end{enumerate}
	We shall refer to the operator $\rKM{V}{z}$ as a {\em polynomial trigonometric K-matrix}.
\end{theorem}

The proof of Theorem~\ref{thm:rational-k} is carried out in Sections~\ref{ss:proof-rational-1}-\ref{ss:proof-rational-2}.
Note that, from Remark~\ref{rmk:spectral-k} \ref{rmk:spectral-k:2}, we get the following.

\begin{corollary} \label{cor:rational-regular-k}
	If $\shift{z}(\gau)=\gau$, the universal K-matrix descends on any irreducible $\UqLg$-module $V$ to a formal series operator $\sKM{ V}{z}\in\fml{\End(V)}{z}$, endowed with a unique factorization
	\begin{align*}
		\sKM{V}{z}=g_V(z)\cdot\rKM{V}{z}\,,
	\end{align*}
	where $g_V(z)\in\fml{\bsF}{z}$ is such that $g_V(0)=1$ and $\rKM{V}{z}\in\End(V)[z]$ is non-vanishing.
\end{corollary}

\subsection{Proof of Theorem~\ref{thm:rational-k}, part \ref{thm:rational-k:1D}}\label{ss:proof-rational-1}
We say that an irreducible $\UqLg$-module $V$ is \emph{generically QSP irreducible} 
if $\Lfml{V}{z}$ is irreducible as a representation over $\Lfml{\Uqk}{z}$. 
Such a condition is the natural counterpart of the generic irreducibility of the tensor product $V\ten \Lfml{W}{z}$ in Theorem \ref{thm:rational-R}, which holds for any pair of irreducible representations $V,W$.\\

The condition of generic irreducibility may depend on the choice of a grading shift. 
In Corollary~\ref{cor:QSP:irreducible}, we prove that every irreducible $\UqLg$-representation is 
generically QSP irreducible with respect to the {\em principal} grading shift. 
It is then clear that, in this case, \ref{thm:rational-k:1D} follows by Schur's lemma. 
Namely, let $\cK_1,\cK_2\in\Lfml{\End(V)}{z}$ be two  solutions of \eqref{eq:spectral-k-intertwiner-V}.
Let 
\begin{equation*}
	\Pfml{\bsF}{z}\coloneqq \bigcup_{n>0}\Lfml{\bsF}{z^{1/n}}
\end{equation*}
be the field of Puiseux series over $\bsF$, \ie the algebraic closure of $\Lfml{\bsF}{z}$, and set
$\Pshrep{V}{z}\coloneqq V\ten\Pfml{\bsF}{z}$. 
The composition $\cK_2^{-1}\cK_1:\Pfml{V}{z}\to\Pfml{V}{z}$ is an intertwiner.
Therefore, by generic QSP irreducibility and Schur's lemma, there exists $g(z)\in\Pfml{\bsF}{z}$ such that $\cK_1=g(z)\cK_2$. 
Clearly, since both operators are defined over $\Lfml{\bsF}{z}$, one has $g(z)\in\Lfml{\bsF}{z}$.\\

The irreducibility result does not immediately carry over to the case of the $\tau$-minimal grading shift. 
Instead, we generalize \ref{thm:rational-k:1D} by proving that the above result on the one-dimensionality of the space of QSP intertwiners \eqref{eq:spectral-k-intertwiner-V} for the principal grading implies the one-dimensionality of the space of QSP intertwiners for the $\tau$-minimal grading shift.\\

Let ${\sf pr}: \aQ \to \Z$ be the group homomorphism defined by $\al_i \mapsto 1$ for all $i \in \wh I$ so that, in the notation of Section~\ref{ss:tau-grading}, $\Sigma_z^{\sf pr}$ denotes the principal grading shift.
Consider any extension of ${\sf pr}|_{\sfQ}$ and $s_\tau|_{\sfQ}$ to group homomorphisms $\sfP\to\Q$, also denoted ${\sf pr}$ and $s_\tau$, respectively.
Note that the extended ${\sf pr}$ and $s_\tau$ will in fact take images in $\frac{1}{m} \Z$ for some positive integer $m$.
We shall regard $V$ as a $\sfP$-graded vector space.
Let $h$ be the Coxeter number of $\fkg$ and define 
\begin{equation*}
h_\tau = \frac{h}{|\{ 0,\tau(0) \}|} = \begin{cases} h & \text{if } \tau(0)=0 \\ \frac{h}{2} & \text{if } \tau(0) \ne 0 \end{cases} \qq \in \tfrac{1}{2}\Z
\end{equation*}
Let $M_{V}(z)$ be the linear operator on $V(z^{1/m}) \subset V\{z\}$ given by 
\begin{equation*}
M_{V}(z)v_{\la}=z^{\sf pr(\la) -h_\tau s_\tau(\la)}v_{\la}
\end{equation*}
for any weight vector $v_\la$ ($\la \in\sfP$).
Since $h=\operatorname{ht}(\drv{})$, it follows that $M_V(z)$ intertwines the {\em principal} shifted action with the {\em $\tau$-minimal }shifted action, \ie 
\begin{equation*}
\Ad(M_V(z)) \circ \pi^{s_{\tau}}_{z^{h_\tau},V} = \pi^{\sf pr}_{z,V}.
\end{equation*}
Finally, we observe that the linear map sending $K_{V}^{\sf pr}(z)$ to
\begin{equation*}
	K_{V}^{s_{\tau}}(z)\coloneqq M_{\twistrep{V}{\psi}}(z^{-1})^{-1} \cdot K_{V}^{\sf pr}(z) \cdot M_V(z)  
\end{equation*}
is an isomorphism between the spaces of principal and $\tau$-minimal QSP intertwiners. Thus, the result follows from Corollary \ref{cor:QSP:irreducible}. 


\subsection{Proof of Theorem~\ref{thm:rational-k}, parts \ref{thm:rational-k:factorization} and \ref{thm:rational-k:RE}}\label{ss:proof-rational-2}

Part \ref{thm:rational-k:factorization} follows immediately from part \ref{thm:rational-k:1D} and Lemma \ref{lem:rational-intertwiner}. 
Namely, let $\trKM{}{z}\in\End(V)(z)$ be any rational solution of \eqref{eq:spectral-k-intertwiner-V}, whose existence is guaranteed by Lemma~\ref{lem:rational-intertwiner}. 
By \ref{thm:rational-k:1D}, there exists $\wt{g}_V(z)\in\Lfml{\bsF}{z}$ such that the identity $\sKM{V}{z}= \wt{g}_{V}(z)\cdot\trKM{V}{z}$ holds. 
Then, by factoring out every pole and common zero of $\trKM{V}{z}$, we obtain the desired factorization $\sKM{V}{z}= {g}_{V}(z)\cdot\rKM{V}{z}$, where $\rKM{V}{z}$ is a non-vanishing operator-valued polynomial.\\

It remains to prove part \ref{thm:rational-k:RE}.
Let $\rRM{VW}{z}=f_{VW}(z)^{-1}\sRM{VW}{z}$ be the trigonometric R-matrix discussed in Thm.~\ref{thm:rational-R}. 
Then, \ref{thm:rational-k:RE} reduces to prove that
\begin{equation}\label{eq:scalar-identity}
f_{\twistrep{W}{\psi}\, \twistrep{V}{\psi}}(\tfrac{w}{z})\, g_{V}(z)\, f_{\twistrep{V}{\psi}\, W}(zw)\, g_{W}(w) = g_{V}(z)\, f_{\twistrep{W}{\psi}\, V}(zw)\, g_{W}(w) \, f_{VW}(\tfrac{w}{z})
\end{equation}
Note that, by \cite[Thm. 8.10]{AV22}, we have $\RM{21}^{\psi\psi} = J_{21} \cdot \RM{} \cdot J^{-1}$. 
Therefore,
\begin{align*}
	\sRM{\twistrep{W}{\psi}\, \twistrep{V}{\psi}}{\tfrac{w}{z}}_{21} = J_{21} \cdot \sRM{VW}{\tfrac{w}{z}} \cdot J^{-1} = f_{VW}(\tfrac{w}{z})\rRM{\twistrep{W}{\psi}\, \twistrep{V}{\psi}}{\tfrac{w}{z}}_{21}\, ,
\end{align*}
\ie $f_{\twistrep{W}{\psi}\, \twistrep{V}{\psi}}(\tfrac{w}{z})=f_{VW}(\tfrac{w}{z})$. 
Similarly, replacing ${V}$ with $\twistrep{V}{\psi}$, we get 
$f_{\twistrep{V}{\psi}\, W}(zw)=f_{\twistrep{W}{\psi}\, \twistrep{V}{\psi^2}}(zw)$\,.
Since $\gau\in\GQSP$, it follows that $\psi^2$ is the identity on $\Uqahp$ and hence the pullback by $\psi^2$ has no impact
on the normalization, \ie  
\begin{align*}
	f_{\twistrep{V}{\psi}\, W}(zw)=f_{\twistrep{W}{\psi}\, \twistrep{V}{\psi^2}}(zw) = f_{\twistrep{W}{\psi}\, V}(zw)\, .
\end{align*}
The result follows. 
\qedhere

\begin{remark}\label{rmk:rational:2}
	The definition of the set $\GQSP$ given in Section~\ref{ss:spectral-k} may appear at first quite \emph{ad hoc}. 
	However, for the proofs of Theorems~\ref{thm:spectral-k} and \ref{thm:rational-k} to work, a gauge
	transformation $\gau\in\Gg$ is required to satisfy the following three conditions:
	\vspace{0.25cm}
	\begin{enumerate}[label=(G\arabic*), start=1]\itemsep0.25cm
		\item\label{cond:G1} $\gau\cdot\bt{\tsat}^{-1}$ is shift-invariant;
		\item\label{cond:G2} $\gau\cdot\parc{}^{-1}$ descends to an operator on shifted representations;
		\item \label{cond:G3} $\psi^2$ is the identity on $\Uqahp$, where $\psi=\Ad(\gau)\circ\tsat_{q}^{-1}$.
	\end{enumerate}
	\vspace{0.25cm}
	By observing the action of $\gau$ on the Cartan subalgebra at $q=1$, one checks that the 
	conditions \ref{cond:G1}, \ref{cond:G2}, \ref{cond:G3} essentially determine $\GQSP$.
	\rmkend
\end{remark}

\subsection{Unitary K-matrices for $\psi$-involutive modules}\label{ss:unitarity}
Let $V,W\in\ModfdUqLg$ be irreducible representations. 
By Theorem~\ref{thm:rational-R}, the trigonometric R-matrix $\rRM{VW}{z}$ satisfies the unitarity condition $\rRM{VW}{z}^{-1}=(1\, 2)\circ\rRM{WV}{z^{-1}}\circ(1\, 2)$. 
Due to the lack of a canonical eigenvector, the analogous result for K-matrices requires a different approach. 
In this section, we discuss the case of $\UqLg$-modules such that $\twistrep{V}{\psi^2}=V$, while in Section~\ref{ss:unitarity-bis} we discuss
the special case $\twistrep{V}{\psi}=V$.

\begin{prop}\label{prop:unitarity}
	Let $V\in\ModfdUqLg$ be an irreducible module such that $\twistrep{V}{\psi^2}=V$.
	There exist non-vanishing trigonometric K-matrices $\rKM{V}{z},\rKM{\twistrep{V}{\psi}}{z} \in \End(V)(z)$ such that
	\vspace{0.25cm}
	\begin{enumerate}[label=$(U\arabic*)$, start=1]\itemsep0.25cm
		\item \label{eq:unitarity-K-1} $\rKM{V}{z}^{-1}=\rKM{\twistrep{V}{\psi}}{z^{-1}}$\,;
		\item \label{eq:unitarity-K-2}
		if $V(\zeta)$ is QSP irreducible for some $\zeta\in\bsF^{\times}$, then $\rKM{V}{\zeta}$ is well-defined and invertible. 
	\end{enumerate}
\end{prop}


\begin{proof}
	Let $\rKM{V}{z}$ and $\rKM{\twistrep{V}{\psi}}{z}$
	be any two polynomial trigonometric K-matrices for $V$ and $\twistrep{V}{\psi}$, respectively. The composition 
	\begin{equation*}
		\begin{tikzcd}[column sep=1.75cm]
			V(z) \arrow[r, "\rKM{V}{z}"] & 
			\twistrep{V}{\psi}(z^{-1}) \arrow[r, "\rKM{\twistrep{V}{\psi}}{z^{-1}}"] & 
			\twistrep{V}{\psi^2}(z)=V(z) 
			{}	\end{tikzcd}
	\end{equation*}	
	is a QSP intertwiner. Therefore, by Theorem~\ref{thm:rational-k}, we must have
	\begin{equation} \label{weakunitarity}
		\rKM{\twistrep{V}{\psi}}{z^{-1}}\rKM{V}{z}=f(z)\id_{V(z)}\, ,
	\end{equation}
	for some nonzero Laurent polynomial $f(z)\in\bsF[z,z^{-1}]$. 
	Thus \ref{eq:unitarity-K-1} follows by rescaling either $\rKM{V}{z}$ or $\rKM{\twistrep{V}{\psi}}{z^{-1}}$. 
	Note that the rescaled operators are non-vanishing $\End(V)$-valued rational functions satisfying the unitarity condition $\rKM{V}{z}^{-1}=\rKM{\twistrep{V}{\psi}}{z^{-1}}$. 
	 
	To prove \ref{eq:unitarity-K-2}, let $m\geqslant 0$ be the order of the pole of $\rKM{V}{z}$ at $z=\zeta$. The map
	\[
	\lim_{z\to \zeta}(z-\zeta)^m\rKM{V}{z}
	\]
	is a nonzero intertwiner $V(\zeta)\to \twistrep{V}{\psi}(\zeta^{-1})$ and therefore invertible.
	By unitarity, this implies that $\rKM{\twistrep{V}{\psi}}{z^{-1}}$ must have a zero of order $m$ at $z=\ze$. Since the latter is non-vanishing, we conclude that $m=0$ and \ref{eq:unitarity-K-2} follows.
\end{proof}

\begin{remark} \label{rmk:unitary} 
	Recall the twist $\psi=\chev\circ\tau$ in the semi-standard case, see Example \ref{rmk:distinguished} \ref{rmk:distinguished:2}.
	Since $\psi$ is involutive, the condition $\twistrep{V}{\psi^2}=V$ is satisfied for {\em any} $V\in\ModfdUqLg$. 
	In general, however, some further adjustment is required.
	For instance, we show in Section~\ref{s:recover-RE} that the distinguished twisting operator $\psi_0$  defined by \eqref{eq:res-rank-twist-op} acts (up to shift) as an involution on a certain class of representations, generalizing the case of $U_q(\wh{\mathfrak{sl}}_2)$ discussed in \cite[Sec.~9]{AV22}.
		\hfill \rmkend
	\end{remark}
	
	\subsection{Unitary K-matrices for $\psi$-fixed modules}\label{ss:unitarity-bis}
	In the special case $\twistrep{V}{\psi}=V$ we have the following refinement of Proposition \ref{prop:unitarity}, in the spirit of Remark~\ref{rmk:spectral-R-matrix} \ref{rmk:Rregularity}.
	The proof follows closely \cite[Sec.~6.3]{RV16}.
	
\begin{prop} \label{prop:unitarity:case2}
	Let $V\in\Modfd{\UqLg}$ be an irreducible representation such that $\twistrep{V}{\psi}=V$.
	Then there exists a non-vanishing trigonometric K-matrix $\rKM{V}{z} \in \End(V)(z)$ such that
	\vspace{0.25cm}
	\begin{enumerate}[label=$(U\arabic*')$]\itemsep0.25cm
		\item \label{eq:unitarity-K:case2} 
		$\rKM{V}{z}^{-1} = \rKM{V}{z^{-1}}$\,;
		\item \label{eq:unitarity-K:case2-bis} 
		if $V(\zeta)$ is QSP irreducible for some $\zeta\in\bsF^{\times}$, then $\rKM{V}{\zeta}$ is well-defined and invertible\,;
		\item \label{eq:unitarity-K:case2-ter} 
		if $V(\pm1)$ is QSP irreducible, then $\rKM{V}{\pm1}=\id_V$\,.
	\end{enumerate}
\end{prop}
	
	\begin{proof}
		Proceeding as in the proof of Proposition~\ref{prop:unitarity}, 
		we consider any polynomial trigonometric K-matrix $\rKM{V}{z}$ for $V$ satisfying the identity
		\begin{equation} \label{weakunitarity:case2}
			\rKM{V}{z^{-1}} \rKM{V}{z} = f(z) \id_{V(z)}\,,
		\end{equation}
		for some nonzero Laurent polynomial $f(z)\in\bsF[z,z^{-1}]$.
		Conjugation by $\rKM{V}{z^{-1}}$ then yields $f(z)=f(z^{-1})$. 
		As a consequence, the roots of $f$ are nonzero and, counting multiplicities, the set of roots of $f$ is invariant under inversion.
		Therefore, $f(z)=g(z)g(z^{-1})$ for some polynomial $g(z) \in \bsF[z]$. 
		By replacing $\rKM{V}{z}$ with $g(z)^{-1}\rKM{V}{z}$, we get \ref{eq:unitarity-K:case2}. 
		Part \ref{eq:unitarity-K:case2-bis} follows as in the proof of Proposition~\ref{prop:unitarity}.\\ 
		
		It remains to prove \ref{eq:unitarity-K:case2-ter}.
		Assume first that $V(\sigma)$ is QSP irreducible, while $V(-\sigma)$ is not, for some $\sigma\in\{\pm1\}$. 
		Then, $\rKM{V}{\sigma}^2=\id_V$ and therefore, by QSP irreducibility, $\rKM{V}{\sigma} = \veps \, \id_V$ with $\veps^2=1$. 
		By replacing $\rKM{V}{z}$ with $\veps \, \rKM{V}{z}$, we get \ref{eq:unitarity-K:case2-ter}. 
		Assume now that $V(1)$ and $V(-1)$ are both QSP irreducible. 
		As in the previous case, we get $\rKM{V}{\pm 1} = \veps_\pm \, \id_V$ with $\veps_{\pm}^2=1$. 
		By replacing $\rKM{V}{z}$ with $\veps_+ z^{\frac{1-\veps_+\veps_-}{2}} \, \rKM{V}{z}$, we get \ref{eq:unitarity-K:case2-ter}.
	\end{proof}
	
	\begin{remark}
	Proposition~\ref{prop:unitarity:case2} readily generalizes to the case $V^\psi\simeq V$. \hfill\rmkend
	\end{remark}
	
	
	\subsection{Abelian quasi-K-matrices and diagonal K-matrices}\label{ss:abelian-k-mx}
	The normalizations introduced in Propositions~\ref{prop:unitarity} and \ref{prop:unitarity:case2} become canonical in presence of a distinguished
	eigenvector.
	This is the case for restricted rank one QSP subalgebras (\cf~Remark~\ref{rmk:KM:pseudoinvolution} \ref{rmk:KM:pseudoinvolution:3}).\\
	
	Indeed, for $\tsat$ of restricted rank one, the quasi-K-matrix defined in Theorem \ref{thm:av-k-mx} is given purely in terms of imaginary root vectors, since it has the form $\QK{}= 1+\sum_{m \in \bbZ_{>0}}	\QK{m\del}$ with $\QK{m\del} \in \Uqanp_{m\del}$. 
	In particular, it preserves the weight spaces of any finite-dimensional $\UqLg$-module.
	
	\begin{prop}\label{prop:res-rank-one}
		Assume that $\tsat$ has restricted rank one and $\shift{z}(\gau)=\gau$.
		Let $V$ be an irreducible finite-dimensional $\UqLg$-module with $\ell$-highest weight vector $v_{0}\in V$. 
		There is a unique  non-vanishing trigonometric K-matrix 
		$\rKM{V}{z}\in\End(V)(z)$ regular at $z=0$ such that 
		\begin{align*}
			\rKM{V}{z}v_{0}=\gau\cdot v_{0}\, .
		\end{align*}
	\end{prop}
	
	\begin{proof}
		Recall that	$\psi=\Ad(\gau)\circ\tsat_q^{-1}$ and $K_\psi = \gau \cdot \Parc^{-1}\cdot\QK{}$.
		Note that $v_0$ is an eigenvector of the actions of $\QK{}$ and $\Parc$ with nonzero eigenvalue.
		Note that the quasi-K-matrix $\QK{}$ descends to the only spectral component of $\sKM{V}{z}$.
		In particular, there exists ${g}_V(z)\in\fml{\bsF}{z}^{\times}$ such that $\QK{V}(z) v_0 = {g}_V(z) \Parc \cdot v_0$, where $\QK{V}(z)\coloneqq\pi_{V,z}(\QK{})$.
		It follows that $\rKM{\psi}{z}\coloneqq g_V(z)^{-1}\cdot\sKM{ V}{z}$ is the unique intertwiner
		$\Lfml{V}{z}\to\Lfml{\twistrep{V}{\psi}}{z^{-1}}$ such that $\rKM{V}{z} v_0=\gau\cdot v_0$\, .
		Therefore, $\rKM{V}{z}\in\End(V)(z)$ is a non-vanishing rational operator regular at $z=0$.
	\end{proof}
	
	\begin{remarks}\hfill
		\begin{enumerate}[leftmargin=3em]\itemsep0.25cm
			\item 
			If $\gau$ is a weight zero operator, then $\rKM{V}{z}$ preserves the weight spaces of $V$. If the weight spaces are all one-dimensional, this yields a \emph{diagonal} trigonometric K-matrix.
			\item
			A similar result can be achieved through normalization with respect to any vector which is a simultaneous eigenvector for the imaginary root vectors. In particular, one
			can consider any {extremal} vector (cf. \cite{Kas02, Cha02}). \hfill\rmkend
		\end{enumerate}
	\end{remarks}
	
By combination with Propositions~\ref{prop:unitarity} and \ref{prop:unitarity:case2}, we get the following
	
\begin{corollary}
	Assume that $\tsat$ has restricted rank one, $\gau$ is weight zero and $\shift{z}(\gau)=\gau$.
	\vspace{0.25cm}
	\begin{enumerate}\itemsep0.25cm
		\item Let $V\in\ModfdUqLg$ be such that $V^{\psi^2}=V$ and consider the trigonometric K-matrices normalized by the conditions
		\begin{gather}
			\rKM{V}{z}v_{0}=v_{0}=\rKM{\twistrep{V}{\psi}}{z}v_0\,.
		\end{gather}
		Then $\rKM{V}{z}^{-1}=\rKM{\twistrep{V}{\psi}}{z^{-1}}$ holds. 
		Moreover, if $V(\zeta)$ is QSP irreducible for some $\zeta\in\bsF^{\times}$, then $\rKM{V}{\zeta}$ is well-defined and invertible.
		\item Let $V\in\ModfdUqLg$ be such that $V^{\psi}=V$ and
		consider the diagonal trigonometric K-matrix normalized by the condition
		$\rKM{V}{z}v_{0}=v_{0}$. Then, if $V(\pm1)$ is QSP irreducible,
		$\rKM{V}{\pm1}=\id_V$.
	\end{enumerate}
\end{corollary}

\begin{proof}
	It suffices to note that, since $\gau$ is weight zero, there exists ${g}_V(z)\in\fml{\bsF}{z}^{\times}$ such that $\sKM{V}{z}v_0=g_V(z)v_0$,
	therefore $\rKM{V}{z}=g_V(z)^{-1}\sKM{V}{z}$. 
	Moreover, $v_0\in\twistrep{V}{\psi}$ is still a simultaneous eigenvector for the imaginary root vectors.
	Then the results follow as in Propositions~\ref{prop:unitarity} and \ref{prop:res-rank-one}.
\end{proof}
	


\section{Generic restricted irreducibility}\label{s:irreducibility}

In this stand-alone section, we discuss various problems of \emph{restricted irreducibility} for finite-dimensional $\UqLg$-modules. 
More precisely, we consider certain proper subalgebras $\nA \subset \UqLg$ and 
we describe the irreducible representations which are generically irreducible under restriction, 
\ie those $V\in\ModfdUqLg$ such that $V(z)$ is irreducible as a module over $\nA(z)\coloneqq\nA\ten\bsF(z)$.
Recall that the definition of $V(z)$ depends on the choice of a grading shift.
For a class of subalgebras, which we refer to as \emph{modified nilpotent}, we prove that every irreducible $\UqLg$-module remains, 
with respect to the principal grading shift, generically irreducible under restriction (Theorem \ref{thm:deformedN:irreducible}). 
As a corollary, we prove that every irreducible $\UqLg$-module is generically QSP irreducible (Corollary~\ref{cor:QSP:irreducible}).\\

We shall use the shorthand notations $\wh U^+$, $\wh U^0$ and $\wh U^-$ to denote the subalgebras of $\UqLg$ generated by $E_i$, $K_i^{\pm 1}$, and $F_i$, respectively, for all $i \in \wh I$.
Set $\wh U^{\geqslant 0} \coloneqq  \wh U^+ \cdot \wh U^0$.
We shall consider only the principal grading shift $\Sigma^{\sf pr}_z: \UqLg \to \UqLg[z,z^{-1}]$, see Section~\ref{ss:tau-grading}. 
As before, for any finite-dimensional $\UqLg$-module $V$, we denote by $V(z) = V \ten \bsF(z)$ the corresponding shifted representation.

\subsection{Generic irreducibility for modified nilpotent subalgebras}

A subalgebra of $\UqLg$ is a \emph{modified (lower) nilpotent subalgebra} if it is generated by elements $\wt F_i$ satisfying $\wt F_i - F_i \in \wh U^{\geqslant 0}$ for all $i \in \wh I $.
The main result of this section is the following statement, which we prove in Section~\ref{sec:gnrc:irred:modified}.

\begin{theorem} \label{thm:deformedN:irreducible}
Every irreducible finite-dimensional $\UqLg$-module is generically irreducible under restriction to a modified nilpotent subalgebra.
\end{theorem}

As in Section~\ref{s:affine-qsp}, let $\theta$ be a pseudo-involution of $\wh \fkg$ of the second kind with associated generalized Satake diagram $(X,\tau)$. Let $(\bm \ga,\bm \si) \in {\bm \Gamma}\times {\bm \Sigma}$ and set $\wt F_i=\Bg{i}$.
From $\theta_q(F_i) \in \wh U^+ \cdot K_i^{-1}$ for all $i \not\in X$ and $\Bg{i} = F_i$ if $i \in X$, we obtain $\wt F_i - F_i \in \wh U^{\geqslant 0}$.
Thus, every QSP subalgebra contains a modified nilpotent subalgebra.
Hence, Theorem \ref{thm:deformedN:irreducible} yields the following result of generic QSP irreducibility.

\begin{corollary} \label{cor:QSP:irreducible}
	Every irreducible finite-dimensional $\UqLg$-module is generically irreducible under
	restriction to a QSP subalgebra.
\end{corollary}

\subsection{Restricted irreducibility for nilpotent subalgebras}
The proof of Theorem \ref{thm:deformedN:irreducible} crucially relies on
the following result: every finite-dimensional irreducible $\UqLg$-module remains irreducible under restriction to $\wh U^+$. 
To our knowledge, this result is not explicitly stated in the literature. However, the analogous result for $\wh U^{\geqslant 0}$ is known as stated below.

\begin{prop}[{\cite[Prop.~3.5]{HJ12} or \cite[Thm. 2.3 (ii) for $\eps = (1)^n$]{Bow07}}] \label{prop:N:irreducible:prep}
	Every irreducible finite-dimensional $\UqLg$-module remains irreducible under restriction to $\wh U^{\geqslant 0}$.
\end{prop}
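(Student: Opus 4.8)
The plan is to derive this from the explicit structure of finite-dimensional irreducible $\UqLg$-modules, following \cite{hernandez-jimbo-12, bowman-07}. Fix such a $V$ and a nonzero $\wh U^{\ge 0}$-submodule $W \subseteq V$; it suffices to prove $W = V$. Recall that $V$ is generated over $\UqLg$ by an $\ell$-highest weight vector $v^+$ --- a common eigenvector for the $\Kg{i}$ and the imaginary root vectors which is annihilated by $\Eg{i}$ ($i \in I$) and by $\Fg{0}$, see \cite[Cor.~12.2.5]{chari-pressley} --- and, by the Chevalley-twisted statement, also by an $\ell$-lowest weight vector $v^-$; moreover the corresponding extremal joint eigenspaces ($\ell$-weight spaces) are one-dimensional. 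Two features of $\wh U^{\ge 0}$ drive the argument: it contains $\wh U^0$ together with the non-negative Drinfeld Cartan modes $\phip{i,r}$ ($i \in I$, $r \ge 0$); and it contains $\xp{i,r}$ ($i \in I$, $r \ge 0$), $\xm{i,r}$ ($i \in I$, $r \ge 1$), and the affine generator $\Eg{0}$.

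First I would show that $W$ is a sum of genuine $\ell$-weight spaces of $V$. Since $V$ is finite-dimensional and irreducible, for each $i \in I$ the currents $\phip{i}(u) = \sum_{r \ge 0} \phip{i,r}\, u^r$ and $\phim{i}(u)$ act as the expansions at $u = 0$ and $u = \infty$ of one and the same $\bsF$-valued rational function \cite{frenkel-reshetikhin-92, chari-pressley}; hence the joint generalized eigenvalues of the $\phip{i,r}$ ($r \ge 0$) alone already separate the $\ell$-weight spaces of $V$. As $W$ is stable under this commutative subalgebra of $\wh U^{\ge 0}$, we get $W = \bigoplus_{\bm\psi} (W \cap V_{\bm\psi})$, and in particular $W$ contains a nonzero $\ell$-weight vector.

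Next I would show that every $\wh U^{\ge 0}$-submodule containing a nonzero $\ell$-weight vector contains $v^+$, and that $v^+$ is $\wh U^{\ge 0}$-cyclic; together with the previous paragraph this gives $W = V$. For the first claim, one uses that the $\Eg{i}$ ($i \in I$) strictly raise the finite $\fkg$-weight and, together with $\Eg{0}$ and the currents $\xp{i,r}$, $\xm{i,r}$ lying in $\wh U^{\ge 0}$, connect the $\ell$-weight graph of $V$ so that the highest $\ell$-weight is reachable from every vertex; since that $\ell$-weight space is $\bsF v^+$, the submodule meets it and hence contains $v^+$. For cyclicity, the point is that $\Eg{0}$ --- which in Drinfeld terms is a lowering current of non-negative loop degree --- together with $\xm{i,r}$ ($r \ge 1$), $\xp{i,r}$ and $\phip{i,r}$ ($r \ge 0$) already generates $U_qL\fkn^- \cdot v^+ = V$ when applied to $v^+$. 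This is precisely the step that fails for the finite-type Borel $\Uqnp$, on which a highest-weight vector spans only a line: the affine node $0$ is indispensable.

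The main obstacle is the representation-theoretic heart of the last paragraph --- that the generators of $\wh U^{\ge 0}$, and crucially $\Eg{0}$, connect all $\ell$-weight spaces of $V$ and make $v^+$ cyclic --- which is exactly \cite[Prop.~3.5]{hernandez-jimbo-12} (equivalently \cite[Thm.~2.3(ii)]{bowman-07}) and whose proof relies on the classification of finite-dimensional modules by Drinfeld polynomials together with a convexity analysis of the PBW generators of $\wh U^{\ge 0}$. I would therefore invoke that result directly rather than reprove it. By contrast, the $\ell$-weight separation used above is routine once the rational form of the currents $\phip{i}(u)$ is available.
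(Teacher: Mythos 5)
Your proposal is, in substance, the same as the paper's treatment: the paper gives no independent proof of this proposition but simply attributes it to \cite[Prop.~3.5]{hernandez-jimbo-12} and \cite[Thm.~2.3 (ii)]{bowman-07}, which is exactly what you end up doing. Note only that the preliminary reductions you sketch (the $\ell$-weight decomposition of a $\wh U^{\ge 0}$-submodule and the cyclicity of $v^+$) are superfluous, since the result you propose to invoke at the final step is already the full statement being proved rather than just its ``representation-theoretic heart''.
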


In fact, the proof of \cite[Prop.~3.5]{HJ12}, which relies on the Drinfeld presentation of $\UqLg$, can be easily strengthened to show that every finite-dimensional irreducible $\UqLg$-module remains irreducible under restriction to $\wh U^+$.
Nonetheless, in order to be consistent with the rest of the paper, we provide a proof in terms of the Drinfeld-Jimbo presentation.\\

Let $V$ be a fixed finite-dimensional irreducible $\UqLg$-module. By restriction to $\Uqg$, $V$ decomposes into irreducible $\Uqg$-modules, yielding a weight decomposition $V = \bigoplus_{\la \in \Plat} V_\la$, where 
\begin{equation*}
	V_\la \coloneqq \{ v \in V \, \vert\, \forall i \in I\,,\,K_i \cdot v = q_i^{\la(h_i)} v \}.
\end{equation*}
Note that $K_0$ acts on each $V_\la$ as multiplication by $q_i^{\la(h_0)} = q^{-(\la,\hrt)}$.
Let $\ol{\cdot}: \aQ \to \Qlat$ be the unique $\Z$-linear projection such that $\ol{\al_i}=\al_i$ ($i \in I$) and $\ol{\al_0} = -\hrt$ (so $\ol{\del}=0$).
For $i \in \wh I$ and $\la \in P$, we have 
\begin{equation*}
	E_i \cdot V_\la \subseteq V_{\la + \ol{\al_i}} 
	\qq\mbox{and}\qq F_i \cdot V_\la \subseteq V_{\la - \ol{\al_i}}.
\end{equation*}
Set 
$\mathsf{Supp}(V) \coloneqq  \{ \lambda \in \Plat \, | \, V_\lambda \ne \{0\} \}$.
From Proposition \ref{prop:N:irreducible:prep} we deduce some useful technical results, the first of which directly generalizes \cite[Cor.~2.7]{CG05}.

\begin{lemma} \label{lem:N:irreducible:prep}
	Let $\la \in \Supp(V)$ and $0\neq v_\la \in V_\la$.
	\begin{enumerate}
		\item \label{lem:N:irreducible:prep:1} $V = \wh U^+ \cdot v_\la$.
		\item \label{lem:N:irreducible:prep:2} For any $\mu \in \Supp(V)$, there exist $\ell \in \Z_{\geqslant 0}$ and $\bm i \in \wh I^\ell$ such that $E_{i_1} \cdots E_{i_\ell} \cdot v_\la$ is nonzero and has weight $\mu$.
	\end{enumerate}
\end{lemma}

\begin{proof}
	Part \ref{lem:N:irreducible:prep:1} follows from Proposition \ref{prop:N:irreducible:prep}, the decomposition $\wh U^{\geqslant 0} = \wh U^+ \cdot \wh U^0$ and the fact that $\wh U^0 \cdot v_\la = \bbF v_\la$.
	To prove part \ref{lem:N:irreducible:prep:2}, note that \ref{lem:N:irreducible:prep:1} implies the existence of $x \in \wh U^+$ such that $0\neq x \cdot v_\la \in V_\mu$.
	Since each monomial in the $E_i$ ($i \in \wh I$) sends $v_\la$ into a weight space, we may assume without loss of generality that all monomials occurring in $x$ map $v$ to $V_\mu$. Since at least one of these monomials does not annihilate $v_\la$, we obtain the result.
\end{proof}

For any $v \in V$, let $\mathsf{Supp}(v) \subseteq \mathsf{Supp}(V)$ be 
defined by the property 
\[
v = \sum_{\la \in \mathsf{Supp}(v)} v_\la
\]
with nonzero $v_\lambda\in V_\lambda$.
Note that $|\mathsf{Supp}(v)|=1$ if and only if $v$ is a weight vector.
For any $i \in \wh I$ we have
\begin{equation}\label{support:shift}
	\mathsf{Supp}(E_i \cdot v) \subseteq {\sf Supp}(v) + \ol{\al_i} 
	\qq\mbox{and}\qq 
	\mathsf{Supp}(F_i \cdot v) \subseteq \mathsf{Supp}(v) - \ol{\al_i}.
\end{equation}
Hence,
\begin{equation} \label{width:bound}
	|\mathsf{Supp}(E_i \cdot v)|\,,\,|\mathsf{Supp}(F_i \cdot v)| \leqslant |\mathsf{Supp}(v)|.
\end{equation}

By \cite[Sec.~3]{CP94a}, also see \cite[Sec.~2.3]{CG05}, for a given irreducible finite-dimensional $U_q(L\fkg)$ module-$V$ there exists $\lambda_0 \in P$ such that $\Supp(V)$ is contained in $\lambda_0 -\Qp$, $E_i \cdot V_{\lambda_0} = \{0\}$ for all $i \in I$ and $F_0 \cdot V_{\lambda_0} = \{0\}$; moreover $\dim(V_{\lambda_0})=1$.
We define the \emph{depth} of a weight vector as follows.
For any $\la \in \Supp(V)$ and $0\neq v_\la \in V_\la$, set
\begin{equation} \label{depth:def1}
	d^+(v_\la) \coloneqq \min\{ \ell \in \Z_{\geqslant 0} \, | \, \exists \bm i \in \wh I^\ell \;
	\mbox{such that}\; 0\neq E_{i_1} \cdots E_{i_\ell} \cdot v_\la \in V_{\la_0} \}.
\end{equation}
Note that, by Lemma \ref{lem:N:irreducible:prep} \ref{lem:N:irreducible:prep:2}, $d^+(v_\la)\in\bbZ_{\geqslant0}$ is 
well-defined.
More generally, for any $0\neq v \in V$ with $v=\sum_{\lambda\in{\sf Supp}(v)}v_{\lambda}$, we set
$d^+(v) \coloneqq \min \{ d^+(v_\la) \, | \, \la \in \mathsf{Supp}(v) \}$.
Note that $d^+(v) = 0$ if and only if $\la_0 \in {\sf Supp}(v)$.

\begin{lemma} \label{lem:N:irreducible:prep2}
	Let $0\neq v \in V$.
	If $d^+(v)>0$, there exists $i \in \wh I$ such that $E_i \cdot v \ne 0$ and $d^+(E_i \cdot v)<d^+(v)$.
\end{lemma}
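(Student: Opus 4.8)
The plan is to read off the required index $i$ directly from the definition of $d^+$. Write $d:=d^+(v)>0$ and choose a weight $\la\in\Supp(v)$ realizing this minimum, so that the weight-$\la$ component $v_\la$ of $v$ satisfies $d^+(v_\la)=d$. By the very definition of $d^+(v_\la)$ in \eqref{depth:def1}, there is a tuple $\bm i=(i_1,\dots,i_d)\in\wh I^d$ with $E_{i_1}\cdots E_{i_d}\cdot v_\la\in V_{\la_0}\setminus\{0\}$. I would then set $i:=i_d$ and check that it does the job.

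First I would verify $E_i\cdot v\ne 0$. Since $E_{i_1}\cdots E_{i_d}\cdot v_\la\ne 0$, necessarily $E_i\cdot v_\la=E_{i_d}\cdot v_\la\ne 0$. Because $E_i$ shifts every weight by the same amount $\ol{\al_i}$, the (at most $|\Supp(v)|$) nonzero vectors $E_i\cdot v_\mu$ with $\mu\in\Supp(v)$ lie in pairwise distinct weight spaces, so no cancellation can occur in $E_i\cdot v=\sum_{\mu\in\Supp(v)}E_i\cdot v_\mu$; in particular the weight-$(\la+\ol{\al_i})$ component of $E_i\cdot v$ equals $E_i\cdot v_\la\ne 0$, whence $E_i\cdot v\ne 0$ and $\la+\ol{\al_i}\in\Supp(E_i\cdot v)$.

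Next I would bound $d^+(E_i\cdot v)$. The equality $E_{i_1}\cdots E_{i_{d-1}}\cdot(E_i\cdot v_\la)=E_{i_1}\cdots E_{i_d}\cdot v_\la$ exhibits a nonzero element of $V_{\la_0}$ obtained from $E_i\cdot v_\la$ by applying $d-1$ raising operators, so $d^+(E_i\cdot v_\la)\le d-1$. Since $d^+$ of a nonzero vector is the minimum of $d^+$ over its weight components, and $E_i\cdot v_\la$ is one of the weight components of $E_i\cdot v$, we obtain $d^+(E_i\cdot v)\le d^+(E_i\cdot v_\la)\le d-1<d=d^+(v)$, as required.

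The argument has essentially no hard step: it is a direct unwinding of the definition of $d^+$, together with Lemma \ref{lem:N:irreducible:prep}, which guarantees that $d^+$ is finite on every nonzero weight vector (and hence on every nonzero vector). The only point deserving attention is the no-cancellation observation — that the distinguished component $E_i\cdot v_\la$ genuinely persists inside $E_i\cdot v$ rather than being killed by the contributions from the other weights in $\Supp(v)$ — which is immediate from the fact that raising by $E_i$ translates all weights uniformly by $\ol{\al_i}$, as already recorded in \eqref{support:shift}.
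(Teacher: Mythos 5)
Your proof is correct and follows essentially the same route as the paper: pick a weight $\la\in\Supp(v)$ with $d^+(v_\la)=d^+(v)$, take the last index $i$ of a minimal monomial $E_{i_1}\cdots E_{i_d}$ sending $v_\la$ into $V_{\la_0}\setminus\{0\}$, and conclude via \eqref{support:shift} and the definition of $d^+$. The only difference is that you spell out the no-cancellation observation (distinct weight components stay in distinct weight spaces under $E_i$), which the paper leaves implicit.
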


\begin{proof}
	Set $\ell \coloneqq  d^+(v) \in \Z_{>0}$.
	By definition, $\la_0 \notin {\sf Supp}(v)$ and there exists $\la \in {\sf Supp}(v)$ such that $d^+(v_\la) = \ell$.
	By Lemma \ref{lem:N:irreducible:prep} \ref{lem:N:irreducible:prep:2}, there exists $\bm i \in \wh I^\ell$ such that $0\neq E_{i_1} \cdots E_{i_\ell} \cdot v_\la \in V_{\la_0}$.
	Set $i=i_\ell$.
	Then $d^+(E_i \cdot v_\la) = \ell-1$.
	The relation \eqref{support:shift} and the definition of $d^+$ yield $d^+(E_i \cdot v) < \ell$.
\end{proof}

Finally, we obtain the following result of restricted irreducibility.

\begin{theorem} \label{thm:N:irreducible}
	Every irreducible finite-dimensional  representation of $\UqLg$ remains irreducible under restriction to $\wh U^+$.
\end{theorem}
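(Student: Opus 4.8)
The plan is to show that every nonzero $\wh U^+$-submodule $W\subseteq V$ equals $V$. By Lemma~\ref{lem:N:irreducible:prep}(1), any nonzero weight vector of $V$ generates $V$ over $\wh U^+$, so it is enough to produce a single nonzero weight vector inside $W$. First I would pick $v\in W\setminus\{0\}$ with $|\Supp(v)|$ minimal and write $\Supp(v)=\{\mu_1,\dots,\mu_k\}$, $v=\sum_{j=1}^k v_{\mu_j}$ with $v_{\mu_j}\in V_{\mu_j}\setminus\{0\}$. If $k=1$ then $v$ itself is the desired weight vector and we are done. So the task is to rule out $k\ge 2$.

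Assume $k\ge 2$. The key point, which I will call the \emph{Claim}, is: there exists a word $x=E_{i_1}\cdots E_{i_\ell}$ in the raising generators and indices $j,j'$ with $x\cdot v_{\mu_j}\ne 0$ but $x\cdot v_{\mu_{j'}}=0$. Granting this, observe that, by the rule $E_i\cdot V_\la\subseteq V_{\la+\ol{\al_i}}$, the word $x$ shifts \emph{every} weight space by the same element $\nu:=\ol{\al_{i_1}}+\dots+\ol{\al_{i_\ell}}$; hence $x\cdot v=\sum_j x\cdot v_{\mu_j}$ is a sum over the pairwise distinct weight spaces $V_{\mu_j+\nu}$. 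Therefore $x\cdot v\ne 0$ (its $(\mu_j+\nu)$-component is $x\cdot v_{\mu_j}\ne 0$), while $\Supp(x\cdot v)\subseteq(\Supp(v)+\nu)\setminus\{\mu_{j'}+\nu\}$ has at most $k-1$ elements; cf.\ \eqref{support:shift}, \eqref{width:bound}. Since $W$ is $\wh U^+$-stable, $x\cdot v\in W\setminus\{0\}$ with strictly smaller support, contradicting minimality. Thus $k=1$, as required.

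It remains to prove the Claim, which is the real content of the argument. Suppose, for contradiction, that every word $x$ in the $E_i$ either annihilates all of $v_{\mu_1},\dots,v_{\mu_k}$ or none of them. By Lemma~\ref{lem:N:irreducible:prep}(2), for each $\mu\in\Supp(V)$ there is a word $x_\mu=E_{i_1}\cdots E_{i_\ell}$ with $x_\mu\cdot v_{\mu_1}\in V_\mu\setminus\{0\}$; in particular its weight shift equals $\mu-\mu_1$. Since $x_\mu$ does not annihilate all of the $v_{\mu_j}$, by assumption it annihilates none of them, so $x_\mu\cdot v_{\mu_2}\in V_{\mu_2+(\mu-\mu_1)}\setminus\{0\}$, i.e.\ $\mu+(\mu_2-\mu_1)\in\Supp(V)$. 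As $\mu$ ranges over the whole (finite, nonempty) set $\Supp(V)$, this says that translation by the vector $\mu_2-\mu_1$ maps $\Supp(V)$ into itself; since $\Supp(V)$ is finite and $\mu_2\ne\mu_1$, iterating gives an infinite subset of $\Supp(V)$ — absurd. This contradiction establishes the Claim.

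The only non-elementary input here is Proposition~\ref{prop:N:irreducible:prep} (irreducibility over $\wh U^{\ge 0}$), entering through Lemma~\ref{lem:N:irreducible:prep}; everything else is weight bookkeeping, the decisive observation being that a word in the raising generators translates all weight spaces uniformly, so that reaching all of $\Supp(V)$ from one weight component of $v$ would force $\Supp(V)$ to be invariant under a nonzero translation. An equivalent route, staying closer to the depth function $d^+$, is to minimise $|\Supp(v)|$ among $v\in W$ with $\la_0\in\Supp(v)$ (such $v$ exist by Lemma~\ref{lem:N:irreducible:prep2}), use the Claim to shrink the support, and then re-apply Lemma~\ref{lem:N:irreducible:prep2} to restore $\la_0\in\Supp(v)$ and contradict minimality.
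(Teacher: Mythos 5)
Your proof is correct, and its combinatorial core is genuinely different from the paper's. Both arguments reduce, via Lemma~\ref{lem:N:irreducible:prep}(1), to producing a nonzero weight vector inside the relevant $\wh U^+$-stable subspace, and both shrink the support of a non-weight vector by applying a monomial in the $E_i$; the difference lies in how the support-shrinking monomial is found. The paper runs an induction on $|\Supp(v)|$ combined with a secondary minimization of the depth $d^+(v)$ relative to the top weight $\la_0$: Lemma~\ref{lem:N:irreducible:prep2} pushes the support up towards $\la_0$, and then the cone property $\Supp(V)\subset\la_0-Q^+$ forces any monomial carrying another component into $V_{\la_0}$ to annihilate the $\la_0$-component. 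You instead prove the existence of a ``separating'' monomial (nonzero on one weight component of $v$, zero on another) abstractly: since a monomial $E_{i_1}\cdots E_{i_\ell}$ shifts all weights by the same element $\nu$, the failure of separation combined with Lemma~\ref{lem:N:irreducible:prep}(2) would make $\Supp(V)$ stable under translation by the nonzero lattice vector $\mu_2-\mu_1$, contradicting its finiteness. This bypasses the depth function, Lemma~\ref{lem:N:irreducible:prep2}, and the highest-weight cone entirely, so your argument is more economical, resting only on Lemma~\ref{lem:N:irreducible:prep} (hence on Proposition~\ref{prop:N:irreducible:prep}) and finite-dimensionality; the paper's route is heavier but more explicit, steering every vector toward the distinguished weight $\la_0$. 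Your closing sketch of a $d^+$-flavoured variant is inessential and somewhat underspecified, but the main argument is complete as written.
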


\begin{proof}
	We will show that $\wh U^+ \cdot v = V$ for all $v \in V \backslash \{0\}$.
	By Lemma \ref{lem:N:irreducible:prep} \ref{lem:N:irreducible:prep:1}, it suffices to show that $\wh U^+ \cdot v$ contains a weight vector.
	We proceed by induction on $s \coloneqq  |\Supp(v)| \in \Z_{>0}$.
	For $s=1$, it is clear.
	For $s>1$, suppose that there exists $v \in V$ such that $|\mathsf{Supp}(v)|=s$ and $\wh U^+ \cdot v$ does not contain a weight vector. Choose one such $v$ with minimal $d^+(v)$.
	\vspace{0.25cm}
	\begin{itemize}[leftmargin=1em]\itemsep0.25cm
		\item
		Suppose $d^+(v)=0$.
		Then $\la_0 \in {\sf Supp}(v)$.
		Since $|{\sf Supp}(v)|>1$, we may choose $\mu \in {\sf Supp}(v)$ with $\mu\neq \la_0$.
		By Lemma \ref{lem:N:irreducible:prep} \ref{lem:N:irreducible:prep:2}, there exists $\bm i \in \wh I^\ell$ with $\ell = d^+(\mu)$ such that $0\neq E_{i_1} \cdots E_{i_\ell} \cdot v_\mu \in V_{\la_0}$.
		Thus, we must have $\mu + \ol{\al_{i_1} + \ldots + \al_{i_\ell}} = \la_0$ with $\ol{\al_{i_1} + \ldots + \al_{i_\ell}} \ne 0$.
		Since ${\sf Supp}(V) \subset \la_0 - \Qp$, we obtain $\ol{\al_{i_1} + \ldots + \al_{i_\ell}} \in \Qp$.
		Since $\la_0 + \ol{\al_{i_1} + \ldots + \al_{i_\ell}} \in \la_0 +\Qp$ and ${\sf Supp}(V) \subset \la_0 -\Qp$, we obtain $\la_0 + \ol{\al_{i_1} + \ldots + \al_{i_\ell}} \notin {\sf Supp}(V)$.
		Hence $E_{i_1} \cdots E_{i_\ell} \cdot v_{\la_0} =0$.
		Therefore, $|{\sf Supp}(E_{i_1} \cdots E_{i_\ell} \cdot v)| < |{\sf Supp}(v)|$.
		\item
		Suppose $d^+(v)>0$.
		By Lemma \ref{lem:N:irreducible:prep2}, there exists $i \in \wh I$ such that $E_i \cdot v \ne 0$ and $d^+(E_i \cdot v) < d^+(v)$.
		From the minimality of $v$ and \eqref{width:bound} we deduce that $|{\sf Supp}(E_i \cdot v)| < |{\sf Supp}(v)|$.
	\end{itemize}
	\vspace{0.25cm}
	In either case, we obtain an inequality of the form $|{\sf Supp}(E_{i_1} \cdots E_{i_\ell} \cdot v)|<|{\sf Supp}(v)|$. 
	Noting that $\wh U^+ \cdot (E_{i_1} \cdots E_{i_\ell} \cdot v) \subseteq \wh U^+ \cdot v$, we obtain a contradiction. 
	Thus, $\wh U^+ \cdot v$ necessarily contains a weight vector and the result follows.
\end{proof}

By applying the Chevalley involution, we get the negative counterpart of the previous result.

\begin{corollary} \label{crl:N:irreducible}
	Every irreducible finite-dimensional  representation of $\UqLg$ remains irreducible under restriction to $\wh U^-$.
\end{corollary}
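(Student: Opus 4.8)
The plan is to deduce the corollary directly from Theorem~\ref{thm:N:irreducible} by transporting along the Chevalley involution, as the sentence preceding the statement already indicates. First I would recall that the Chevalley involution $\om$ (given on $\Uqag$ by \eqref{eq:chevalley}) descends to an algebra automorphism of the quantum loop algebra $\UqLg$, and note that, since $\om(\Eg{i}) = -\Fg{i}$ and $\om(\Fg{i}) = -\Eg{i}$ for all $i \in \wh I$, it restricts to an algebra isomorphism $\wh U^+ \to \wh U^-$ (sending $\wh U^0$ to itself and hence $\wh U^{\ge 0}$ to $\wh U^{\le 0} := \wh U^-\cdot\wh U^0$).

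Next, given a finite-dimensional irreducible $\UqLg$-module $\pi_V : \UqLg \to \End(V)$, I would form the pullback $\om^*(V)$, i.e.\ the module with underlying space $V$ and action $\pi_V \circ \om$. Since $\om$ is an algebra automorphism, $\om^*(V)$ is again irreducible and finite-dimensional; it is moreover of type $\mathbf 1$, which is the only point requiring a (one-line) check: $\om(\Kg{i}) = \Kg{i}^{-1}$ turns an eigenvalue $q^m$ into $q^{-m} \in q^{\Z}$, and $\om(\Kg{c}) = \Kg{c}^{-1}$ still acts as the identity. Hence Theorem~\ref{thm:N:irreducible} applies to $\om^*(V)$, so it remains irreducible under restriction to $\wh U^+$.

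Finally I would translate this back to $V$. As subalgebras of $\End(V)$ one has
\[
	\pi_{\om^*(V)}(\wh U^+) \;=\; \pi_V\big(\om(\wh U^+)\big) \;=\; \pi_V(\wh U^-),
\]
so that, for every nonzero $v \in V$,
\[
	\pi_V(\wh U^-)\cdot v \;=\; \pi_{\om^*(V)}(\wh U^+)\cdot v \;=\; V,
\]
the last equality being exactly the conclusion of Theorem~\ref{thm:N:irreducible} for $\om^*(V)$. This says precisely that $V$ is irreducible as a $\wh U^-$-module. I do not expect any genuine obstacle here: the argument is a formal pullback, and the only thing to be careful about is confirming that $\om^*(V)$ stays within the class of modules to which Theorem~\ref{thm:N:irreducible} applies (finite-dimensional, type $\mathbf 1$), which is immediate from the explicit formula for $\om$.
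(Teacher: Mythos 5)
Your argument is correct and is exactly the paper's proof: the corollary is obtained by pulling back $\pi_V$ along the Chevalley involution $\om$, which swaps $\wh U^+$ and $\wh U^-$, and then applying Theorem~\ref{thm:N:irreducible}. The extra checks you include (that $\om$ descends to $\UqLg$ and that $\om^*(V)$ is again a finite-dimensional irreducible type $\mathbf 1$ module) are the right ones and are immediate, as you say.
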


\subsection{Proof of Theorem \ref{thm:deformedN:irreducible}} \label{sec:gnrc:irred:modified}

The proof of the main result of this section relies on the following basic fact, where $\bbF$ is an arbitrary field, $z$ an indeterminate and evaluation at 0 in the polynomial ring $\bbF[z]$ is denoted ${\sf ev}_0$.

\begin{lemma} \label{lem:linearalgebra}
	Let $V$ be a finite-dimensional $\bbF$-linear space and $\mathcal M(z)$ a nonzero $\bbF((z))$-linear subspace of $(\End V)(z)$ 
Denote the $\bbF$-linear subspace of $\mathcal M(z)$ of polynomial elements by $\mathcal M[z]$.
	Set $\mathcal M \coloneqq  {\sf ev}_0(\mathcal M[z]) \subseteq \End V$. 
If $V$ has no nontrivial proper $\mathcal M$-invariant subspace, then $V(z) = V\ten\bbF(z)$ 
has no nontrivial proper $\mathcal M(z)$-invariant subspace.
\end{lemma}

\begin{proof}
	Let $S(z)\neq\{0\}$ be an $\mathcal M(z)$-invariant subspace of $V(z)$ and consider $S[z] \coloneqq S(z) \cap V[z]$, which does not lie in the kernel of ${\sf ev}_0$.
	Since $S[z]$ is $\mathcal M[z]$-invariant, evaluation at 0 yields that ${\sf ev}_0(S[z])$ is a nontrivial $\mathcal M$-invariant subspace.
	We obtain that ${\sf ev}_0(S[z]) = V$. 
	By extension of scalars, we get $S(z) = V(z)$.
\end{proof}

We can now complete the proof of Theorem \ref{thm:deformedN:irreducible}.
Let $V$ be a finite-dimensional $U_q(L\fkg)$-module.
For a given modified nilpotent subalgebra of $U_q(L\fkg)$ with generators $\wt F_i$ ($i \in \wh I$), denote by $\mathcal M(z)$ the induced subalgebra of $(\End V)(z)$. 
Let $i \in \wh I$ be arbitrary. 
Since $\wt F_i - F_i \in \wh U^{\geqslant 0}$, the action of $z \wt F_i \in \UqLg(z)$ on $V(z)$ is polynomial in $z$.
Given the principal grading shift, using the notation of Lemma \ref{lem:linearalgebra}, the action of $z F_i$ on $V(z)$ is contained in $\mathcal M$ and descends to the action of $F_i$ on $V$.
By Corollary \ref{crl:N:irreducible}, $V$ contains no nontrivial proper $\mathcal M$-invariant subspace.
Now Lemma \ref{lem:linearalgebra} yields the desired result.


\section{Solutions of reflection equations}\label{s:recover-RE}

In this section, we apply the results from Section~\ref{s:rational} to produce new 
solutions of the two most common forms of the generalized reflection equation, the standard and transposed reflection equation. 
These solutions arise from distinguished trigonometric K-matrices on small modules and Kirillov-Reshetikhin modules.

\subsection{The standard and the transposed reflection equations}\label{ss:std-re}
Let $V$ and $W$ be two irreducible finite-dimensional $\UqLg$-modules and denote by $\rRM{VW}{z}$ is the trigonometric R-matrix from Theorem \ref{thm:rational-R}.
On the tensor product $\shrep{V}{z} \ten \shrep{W}{w}$ we consider the standard reflection equation 
\begin{equation} \label{eq:rational-original-RE}
	\begin{aligned}
		& \rRM{WV}{\tfrac{w}{z}}_{21} \cdot (\id_V \ten\rKM{W}{w} )\cdot \rRM{VW}{zw} \cdot (\rKM{V}{z} \ten \id_W) = \\
		& \qquad \qquad = (\rKM{V}{z} \ten \id_W) \cdot \rRM{WV}{zw}_{21} \cdot (\id_V \ten \rKM{W}{w}) \cdot \rRM{VW}{\tfrac{w}{z}}
	\end{aligned}
\end{equation}
where $\rKM{V}{z}$ and $\rKM{W}{w}$ are unknown operator-valued rational functions, 
and the transposed reflection equation
\begin{equation}
	\label{eq:rational-transposed-RE}
	\begin{aligned}
		& \rRM{WV}{\tfrac{w}{z}}_{21}^{{\sf t}_V {\sf t}_W} \cdot (\id_V \ten \trKM{W}{w}) 
			\cdot ( \rRM{VW}{zw}^{-1})^{\sf t_V} \cdot (\trKM{V}{z} \ten \id_W) = \\
		& \qquad \qquad = (\trKM{V}{z} \ten \id_W) \cdot (\rRM{WV}{zw}^{-1}_{21})^{\sf t_W} 
			\cdot (\id_V \ten \trKM{W}{w}) \cdot \rRM{VW}{\tfrac{w}{z}}\,
	\end{aligned}
\end{equation}
where $\trKM{V}{z}$ and $\trKM{W}{w}$ are unknown operator-valued rational functions and ${\sf t}_V$ and ${\sf t}_W$ denote the transposition on the corresponding component.

\subsection{Small modules}
A finite-dimensional $\UqLg$-module is {\em small} if it remains irreducible under restriction to $\Uqg\subset\UqLg$,
\ie it is an irreducible finite-dimensional $\Uqg$-module, whose action extends to one of $\UqLg$. 
In type $\sfA$, a $U_q(L\mathfrak{sl}_N)$-module is small if and only if it is an evaluation representation, see \cite{CP94b}. 
In other types, an exhaustive description of small modules is not known. 
However, the case of fundamental representations is studied in detail in \cite[Thm.~6.8]{CP94a}. 
Indeed, in this case we have the following sufficient condition.
Recall that $a_i$ indicates the multiplicity of the simple root $\al_i$ in the highest root $\vartheta$ of $\g$. 

\begin{lemma}
	Let $i \in I$. 
	If $a_i=1$ or $a_i=\iip{\vartheta}{\vartheta}/\iip{\rt{i}}{\rt{i}}$, the action of $\Uqg$ on the fundamental representation $V_{\om_i}$ extends to an action of $\UqLg$.
\end{lemma}

\begin{proof}
	By \cite[Prop.~12.1.17]{CP95}, under the same hypothesis the action of $\g$ on its fundamental representation extends to an action of the Yangian. 
	By \cite{GTL16}, the same result applies to fundamental representations of $\Uqg$.
\end{proof}

For classical types, this result applies to every fundamental representation in type $\sfA$ and $\sfC$, 
the vector and the spin representations in types $\sfB$ and $\sfD$.
For exceptional types, in terms of the labelling conventions of \cite{Bou68}, it applies to the fundamental representations $V_{\omega_1}$ and $V_{\omega_6}$ for $\sfE_6$, $V_{\omega_7}$ for $\sfE_7$,
$V_{\omega_4}$ for $\sfF_4$, and $V_{\omega_2}$ for $\sfG_2$.

\subsection{Amenable representations}\label{ss:amenable}
Let $V$ be a small finite--dimensional $\UqLg$-module.
By Jacobson's density theorem, the defining data of $V$ as a representation over $\UqLg$ can be expressed entirely in terms of $\Uqg$. 
Thus, there exist $u_0^{\pm}\in\Uqg$ such that $\pi_V(x_0^{\pm})=\pi_V(u_0^{\pm})$, where $x_0^+\coloneqq E_0$ and $x_0^-\coloneqq F_0$. 
In the following, we shall consider small representations where the elements $u_0^\pm$ have a prescribed expression.\\ 

Let $x_\vartheta^{\pm}\in\Uqn^\pm$ be a choice of root vectors corresponding to the highest root $\vartheta$ such that $\omega(x_\vartheta^{\pm})=-x_\vartheta^{\mp}$.
Note that $\oi_I(x_\vartheta^{\pm})=\veps \, x_\vartheta^{\pm}$
for some $\veps \in \{\pm1 \}$.

\begin{definition}
	A small finite-dimensional $\UqLg$-module $V$ is \emph{amenable} 
	if 
	\begin{equation}\label{eq:amenable}
		\pi_V(x_0^\pm)=\lambda_\mp\pi_V(x_\vartheta^\mp)
	\end{equation} 
	for some $\lambda_{\pm}\in\bsF^\times$.
\end{definition}

\begin{remark}
	In type $\sfA$, every evaluation representation is amenable.
	In the other classical types, one checks by
	direct inspection that the vector representation is amenable.
	More generally, we expect that every small finite-dimensional
	$\UqLg$-module is amenable.
	\rmkend
\end{remark}

\subsection{The twisting operator $\psi_0$}

Let $(X,\tau)\in\gsat{\aff{A}}$ be an affine generalized Satake diagram with pseudo-involution $\tsat$, $(\parc{},\pars{})\in\Parsetc\times\Parsets$, and $\Uqk\subset\Uqagp$ the corresponding QSP subalgebra. 
We consider the subdiagram $Y_0\subset \aIS$ and the automorphism $\eta_0:\aIS\to\aIS$ given by
\begin{equation}
	\eta_0(0)=\tau(0)\,
	\qq\mbox{and}
	\qq \eta_0\vert_{Y_0}=\oi_{Y_0}
\end{equation}
where $Y_0\coloneqq\aIS\setminus\{0,\tau(0)\}$.
By casework, $\eta_0$ is a diagram automorphism and $(Y_0,\eta_0)$ is a generalized Satake diagram. Note also that $\eta_0$ and $\tau$ commute. 
Finally, let $\zsat_0$ be the pseudo-involution
corresponding to $(Y_0,\eta_0)$ and choose a QSP admissible
twisting operator (\cf~Section~\ref{ss:spectral-k})
\begin{equation}\label{eq:res-rank-twist-op}
	\psi_0=\Ad(\gau_0)\circ\tsat_q^{-1}
\end{equation}
where $\gau_0\in\GQSP$ has the form $\gau_0\coloneqq\bt{\zsat_0}^{-1}\bt{\tsat}{\bm\beta}^{-1}$ for some ${\bm\beta}:\Plat\to\bsF^\times$.

\subsection{$\tau$-restrictable QSP subalgebras}\label{ss:tau-res}
An affine QSP subalgebra $\Uqk\subset\UqLg$ is $\tau$-{\em restrictable} if $\tau(0)=0$. In this case, the $\tau$-minimal grading shift coincides with the homogeneous grading shift, and $Y_0$ and $\eta_0$ are independent of $\tau$, since $Y_0=I$ and $\eta_0$ is the unique affine extension of the opposition involution $\oi_I$ on the finite Dynkin diagram. Moreover, in 
\eqref{eq:res-rank-twist-op} we choose $\bm\beta$ such that ${\bm\beta}(\rt{i})=1$ if $i\neq0$ and ${\bm\beta}(\rt{0})={\bm\gamma}^{-1}(\drv{})$.

\begin{lemma}\label{lem:twist-restrictable-case}
	Let $\Uqk\subset\UqLg$ be a $\tau$-restrictable affine QSP subalgebra and
	$V$ a small amenable finite-dimensional $\UqLg$-module. 
	There exists unique $c_V\in\bsF^\times$ such that $\twistrep{V}{\psi_0}=\twistrep{V}{\eta_0\tau}(c_V)$.
\end{lemma}

\begin{proof}
	First, we consider the case ${\bm\gamma}(\drv{})=1$.
	Since $\Uqk$ is $\tau$-restrictable, we have $Y_0=I$ and $\eta_0|_I = \oi_I$. Therefore, $\zsat_{0,q}$ is the identity on $\Uqg$ and $\twistrep{V}{\psi_0}=\twistrep{V}{\eta_0\tau}$ as $\Uqg$-modules.
	Since $V$ is small, there exist $u_0^{\pm}\in\Uqg$ such that $\pi_V(x_0^{\pm})=\pi_V(u_0^{\pm})$, where as before  $x_0^+\coloneqq E_0$ and $x_0^-\coloneqq F_0$.
	By construction, we have $\psi_0 =$ $\Ad(\bt{\zeta_0}^{-1}) \circ \om \circ \tau$ with $\bt{\zeta_0} = \tcorr{\zeta_0}\cdot\qWS{Y_0}$.
	Since $\qWS{Y_0}$ is supported on $U_q(\fkg)$ and $V$ is a $\Plat$-weight module, it follows that 
	\begin{equation*}
		\pi_V(\psi_0(x_0^{\pm})) = -\pi_V(\bt{\zeta_0}^{-1}) \cdot \pi_V(x_0^\mp) \cdot \pi_V(\bt{\zeta}) = -\pi_V(\Ad(\bt{\zeta_0}^{-1})(u_0^\mp)).
	\end{equation*}
	Since $\Ad(\bt{\zeta_0}^{-1})$ acts on $U_q(\fkg)$ as $\om \circ \oi_I$, we obtain $\pi_V(\psi_0(x_0^{\pm})) = -\pi_V((\omega \circ \oi_I)(u_0^{\mp}))$.
	Since $V$ is amenable, $u_0^\mp=\lambda_{\pm}x_{\vartheta}^{\pm}$ for some $\lambda_{\pm} \in \bsF^\times$, hence $\oi_I(u_0^{\pm})=\veps \, u_0^{\pm}$
	for some $\veps \in \{\pm1 \}$, see Section~\ref{ss:amenable}.
	Therefore, by setting $c\coloneqq\veps\lambda_-/\lambda_+$, we get $-(\omega \circ \oi_I)(u_0^{\mp})=c^{\pm1}u_0^{\pm}$.
	Finally this yields $\pi_V(\psi_0(x_0^{\pm}))=c^{\pm1}\pi_V(x_0^{\pm})$ and therefore $V^{\psi_0}=V^{\eta_0\tau}(c)$.
	Indeed, since the composition $\eta_0 \tau$ fixes the node 0, the identification holds for $E_0$ and $F_0$.
	The proof for $K_0$ is analogous and relies on the fact that $\pi_V(K_0) = \pi_V(K_\vartheta^{-1})$. The case ${\bm\gamma}(\drv{})\neq 1$
	is similar and its proof is omitted for simplicity.
\end{proof}

\begin{remarks}\label{rmk:shift}
	\hfill
	\begin{enumerate}[leftmargin=2em]\itemsep0.25cm
	\item \label{rmk:shift:1}
	The factor $c_V\in\bsF^\times$ determined by the identity $V^{\psi_0}=V^{\eta_0\tau}(c_V)$ 
	can be removed through a suitable shift of the representation $V$. 
	Namely, proceeding as in \cite[Sec.~9]{AV22}, one shows that there exists a unique $c\in \bsF^\times$ (up to a sign) 
	such that the shifted representation $V_c\coloneqq V(c)$ satisfies $V_c^{\psi_0}=V_c^{\eta_0\tau}$. 
	Therefore, up to a uniquely determined shift, for any small amenable finite-dimensional $\UqLg$-module $V$, one has $V^{\psi_0}=V^{\eta_0\tau}$.
	\item \label{rmk:shift:2}
	From the classification of generalized Satake diagrams of affine type in \cite[App.~A, Tables 5, 6 and 7]{RV21} it follows that
	a QSP subalgebra is $\tau$-restrictable if and only if $\tau$ is either the identity or $\eta_0$,
	except in type $\sfD^{(1)}_{2n}$, where there exist an involutive diagram automorphisms which fixes the affine node and differs from $\eta_0=\id$.
	\item \label{rmk:shift:3}
	An affine QSP subalgebra $\Uqk\subset\UqLg$ is {\em restrictable} if it is $\tau$-restrictable and $0\not\in X$.  
 	By Corollary~\ref{cor:rational-regular-k} the universal K-matrix defined in terms of the gauge element $\gau = \gau_0$ 
	descends to any finite-dimensional $\UqLg$-module $V$ to a formal operator $\sKM{V}{z}\in\fml{\End(V)}{z}$. 
	Moreover, in analogy with the case of the R-matrix, it follows by the uniqueness of the quasi-K-matrix 
	that $\sKM{V}{0}$ corresponds to the action on $V$ of the universal K-matrix associated to $U_q(\fkg)$ considered in \cite{BK19}. 
	\rmkend	
	\end{enumerate}
\end{remarks}

\subsection{Solutions of the diagrammatic reflection equation}\label{ss:our-solutions}
In the case of $\tau$-restrictable QSP subalgebras, the trigonometric K-matrices constructed in Theorem~\ref{thm:rational-k} on small amenable modules are solutions of a \emph{diagrammatic} reflection equation.

\begin{theorem}\label{thm:sol-diag-RE}
	Let $\Uqk\subset\UqLg$ be a $\tau$-restrictable affine QSP subalgebra.
	\vspace{0.25cm}
	\begin{enumerate}[leftmargin=2em]\itemsep0.25cm
	\item 
	Let $V$ be a small amenable finite-dimensional $\UqLg$-module. 
	Up to a uniquely determined shift in $V$ (cf. Remark~\ref{rmk:shift} \ref{rmk:shift:1}), there exists (up to a scalar multiple) a unique QSP intertwiner 
	\begin{equation} \label{QSPintertwiner:shifted}
		\rKM{V}{z}:\shrep{V}{z}\to\shrep{V^{\eta_0\tau}}{z^{-1}}
	\end{equation} 
	\item 
	Let $V, W$ be two small amenable finite-dimensional $\UqLg$-modules. 
	The operators $\rKM{V}{z}$, $\rKM{W}{w}$ satisfy the {diagrammatic} reflection equation
	\begin{align}\label{eq:bala-kolb-re}
		& \rRM{WV}{\tfrac{w}{z}}_{21} \cdot (\id\ten\rKM{W}{w}) \cdot {\bf R}_{V\,W}^{\eta_0\tau}(zw) \cdot (\rKM{V}{z}\ten \id) = \\
		& \qquad \qquad = (\rKM{V}{z} \ten \id) \cdot {\bf R}_{W\,V}^{\eta_0\tau}(zw)_{21} \cdot (\id\ten\rKM{W}{w}) \cdot \rRM{VW}{\tfrac{w}{z}}\, .
	\end{align}
	\end{enumerate}
\end{theorem}

\begin{proof}
	It is enough to observe that, by Lemma~\ref{lem:twist-restrictable-case} and Remark~\ref{rmk:shift} \ref{rmk:shift:1},
	$\twistrep{V}{\psi_0}=V^{\eta_0 \tau}$ (up to a suitable shift in $V$).
	Consider the trigonometric K-matrix $\rKM{V}{z}$ from Theorem~\ref{thm:rational-k} for the gauge element $\gau = \gau_0$. 
	It provides the desired intertwiner and the generalized reflection equation satisfied by $\rKM{V}{z}$ and $\rKM{W}{w}$ on $V(z)\ten W(w)$ reduces to \eqref{eq:bala-kolb-re}.
\end{proof}

\begin{remark}
	The same result applies for any irreducible finite-dimensional representation $V$ (not necessarily small) equipped with a distinguished isomorphism $V^{\psi_0}\simeq V^{\eta_0\tau}$.\hfill\rmkend
\end{remark}

In Sections~\ref{ss:small-std-RE}-\ref{ss:KR-RE}, we shall discuss several refinements of Theorem~\ref{thm:sol-diag-RE}.

\subsection{Solutions of the standard and the transposed reflection equations}\label{ss:small-std-RE}

Solutions of \eqref{eq:rational-original-RE} and \eqref{eq:rational-transposed-RE} on amenable modules are obtained as an immediate consequence of Theorem~\ref{thm:sol-diag-RE}.

\begin{theorem}\label{thm:sol-std-tr-RE}
	Let $\Uqk\subset\UqLg$ be an affine $\tau$-restrictable QSP subalgebra and $V, W$ small amenable finite-dimensional $\UqLg$-modules. 
	\vspace{0.25cm}
	\begin{enumerate}\itemsep0.25cm
		\item If $\tau=\eta_0$, up to a uniquely determined shift in $V$, there exists (up to a scalar multiple) a unique QSP intertwiner 
		\begin{equation} 
			\rKM{V}{z}:\shrep{V}{z}\to\shrep{V}{z^{-1}}
		\end{equation} 
		Moreover, $\rKM{V}{z}$ is a solution of the standard reflection equation \eqref{eq:rational-original-RE}.
		\item If $\tau=\id$, up to a uniquely determined shift in $V$, there exists (up to a scalar multiple) a unique QSP intertwiner
		\begin{equation} 
			\rKM{V}{z}:\shrep{V}{z}\to\shrep{V^*}{z^{-1}}
		\end{equation} 
		Moreover, under the identification of $V$ and $V^*$ as vector spaces, $\rKM{V}{z}$
		is a solution of the transposed reflection equation \eqref{eq:rational-transposed-RE}.
	\end{enumerate}
\end{theorem}

\begin{proof} \mbox{}
\begin{enumerate}\itemsep0.25cm
\item
The result follows immediately from Theorem~\ref{thm:sol-diag-RE} and equation \eqref{eq:bala-kolb-re}.
\item
By \cite[Sec.~1]{Cha02}, there exists an integer $c\in\bbZ$ depending only on $\g$ such that, for any irreducible finite-dimensional $\UqLg$-module $W$, one has $W^{\eta_0}\simeq W^*(q^c)$.
Then, proceeding as in Theorem~\ref{thm:sol-diag-RE} and relying on the latter identification, we obtain a QSP intertwiner $\rKM{V}{z}: V( z)\to V^*(z^{-1})$. 
Since in any quasitriangular Hopf algebra with universal R-matrix $R$ one has $(S\ten\id)(R)=R^{-1}$, the equation \eqref{eq:bala-kolb-re}
reduces to the transposed reflection equation \eqref{eq:rational-transposed-RE} through the identification of $V^*$ and $V$ as vector spaces. \hfill \qedhere
\end{enumerate}
\end{proof}

By Remark~\ref{rmk:shift} \ref{rmk:shift:2}, the cases $\tau=\id$ and $\tau=\eta_0$ encompass all $\tau$-restrictable QSP subalgebra (except those in type $\sfD^{(1)}_{2n}$ with non-trivial diagram automorphism).

In \cite{RV16}, solutions of the standard and transposed reflection equations on the vector representation of quantum loop algebras of classical Lie type are computed in terms of QSP intertwiners. 
By Theorem~\ref{thm:sol-std-tr-RE} we obtain the following result.

\begin{corollary}\label{cor:rv-sol}
Every explicit solution constructed in \cite{RV16} for $\tau$-restrictable QSP subalgebras (except in type $\sfD^{(1)}_{2n}$ with $\tau \ne \id$)
arises from the action of a universal K-matrix.
\end{corollary}

\subsection{Trigonometric K-matrices on Kirillov-Reshetikhin modules}\label{ss:KR-RE}

Kirillov-Reshetikhin modules, see \cite{KR87}, are minimal affinizations of irreducible $U_q(\g)$-modules 
of highest weight a multiple of a fundamental weight, see, \eg~ \cite{Her06} and references therein.
More precisely, for any $i\in I$, $k\in\bbZ_{> 0}$, and $a\in\bbC^\times$, 
the Kirillov-Reshetikhin module $W_{k,a}^{(i)}$ is the unique irreducible $\UqLg$-module whose
Drinfeld polynomials are all trivial except for the node $i$, where the roots are given by a $q_i$-string of length $k$ starting at $a$, \cf~\cite{CP94a}. For these modules, we obtain the following analogue of Theorems~\ref{thm:sol-diag-RE} and \ref{thm:sol-std-tr-RE}.
\begin{theorem}\label{thm:sol-KR-RE}
	Let $\Uqk\subset\UqLg$ be an affine $\tau$-restrictable QSP subalgebra and 
	$W$ a Kirillov-Reshetikhin $\UqLg$-module. 
	\vspace{0.25cm}
	\begin{enumerate}[leftmargin=2em]\itemsep0.25cm
		\item \label{thm:sol-KR-RE:1} 
		Up to a uniquely determined shift in $W$, there exists (up to a scalar multiple) a unique QSP intertwiner
		\begin{equation} 
			\rKM{W}{z}:\shrep{W}{z}\to\shrep{W^{\eta_0\tau}}{z^{-1}}
		\end{equation} 
		Moreover, $\rKM{W}{z}$ is a solution of the diagrammatic reflection equation \eqref{eq:bala-kolb-re}.
		\item \label{thm:sol-KR-RE:2} 
		If $\tau=\eta_0$, up to a uniquely determined shift in $W$, there exists (up to a scalar multiple) a unique QSP intertwiner
		\begin{equation} 
			\rKM{W}{z}:\shrep{W}{z}\to\shrep{W}{z^{-1}}
		\end{equation} 
		Moreover, $\rKM{W}{z}$ is a solution of the standard reflection equation \eqref{eq:rational-original-RE}.
		\item \label{thm:sol-KR-RE:3}
		If $\tau=\id$, up to a uniquely determined shift in $W$, there exists (up to a scalar multiple) a unique QSP intertwiner
		\begin{equation} 
			\rKM{W}{z}:\shrep{W}{z}\to\shrep{W^*}{z^{-1}}
		\end{equation} 
		Moreover, under the identification of $W$ and $W^*$ as vector spaces, $\rKM{W}{z}$
		is a solution of the transposed reflection equation \eqref{eq:rational-transposed-RE}.
	\end{enumerate}
\end{theorem}

\begin{proof}
Parts \ref{thm:sol-KR-RE:2} and \ref{thm:sol-KR-RE:3} follow from \ref{thm:sol-KR-RE:1} as in Theorem~\ref{thm:sol-std-tr-RE}. 
For \ref{thm:sol-KR-RE:3}, it is enough to observe that, if $W=W_{k,a}^{(i)}$, then 
\begin{equation}\label{eq:KR}
W^\omega\simeq W^{\eta_0}(a^{-2}q_i^{-2(k-1)})\,,
\end{equation}
see, \eg \cite{Cha02}.
By Theorem~\ref{thm:rational-k}, the semi-standard K-matrix 
(\cf~Remark~\ref{rmk:distinguished} \ref{rmk:distinguished:2}) provides a QSP intertwiner
$W(z)\to W^{\omega\tau}(z^{-1})$, which solves the generalized reflection equation.
We compose it with the isomorphism \eqref{eq:KR} and proceed as in Theorem~\ref{thm:sol-diag-RE}
to obtain the desired QSP intertwiner $\rKM{W}{z}$ satisfying \eqref{eq:bala-kolb-re}.
\end{proof}

\begin{remark}
The result above holds more generally for every finite-dimensional irreducible module $W$ satisfying a generalization of \eqref{eq:KR}, that is $W^\omega$ and $W^{\eta_0}$ are isomorphic up to a shift. In type $\mathsf{ADE}$, for instance, this is true if and only if there exists $c\in\bsF^\times$ such that the set of roots of each Drinfeld polynomial is invariant under the transformation $z\mapsto cz^{-1}$.
\rmkend
\end{remark}

In the case of quasi-split QSP subalgebras of affine type $\sfA$ where $\tau=\id$ (type $\sfA.1$ in the classification of \cite{RV16}), $\tau$ is the half-turn rotation of the Dynkin diagram (type $\sfA.4$) or $\tau=\eta_0$ (types $\sfA.3a$ and $\sfA.3b$), a combinatorial formula for the trigonometric K-matrices given in parts \ref{thm:sol-KR-RE:2} and \ref{thm:sol-KR-RE:3} above has been obtained in \cite{KOW24}. 
Theorem \ref{thm:sol-KR-RE} now implies the following result.

\begin{corollary}\label{cor:kow-sol}
Every explicit solution in \cite{KOW24} for QSP subalgebras of type $\sfA.1$ and $\sfA.3$ arises from the action of a universal K-matrix.
\end{corollary}

\subsection{The q-Onsager algebra and the vector representation }\label{ex:q-Ons-K}
We conclude with the explicit formula of the trigonometric K-matrix $\rKM{}{z}$ arising from the q-Onsager algebra $\Uqk$ on the vector representation $V$ of $U_qL{\mathfrak{sl}}_2$, which first appeared in \cite{DVGR94}. 
In particular, we observe that the poles of $\rKM{}{z}$ completely detect the irreducibility of $V$ under restriction to $\Uqk$, only for a {generic} choice of the QSP parameters (\cf~Propositions~\ref{prop:unitarity}, \ref{prop:unitarity:case2}). 
More specifically, the denominator in $\rKM{}{z}$ on $V$ is \emph{generically} a polynomial of degree 2, but for QSP parameters satisfying specific closed conditions it reduces to a linear and even constant polynomial. 
In the latter cases, one checks that there are values $\zeta\in\bsF^\times$ such that $V(\zeta)$ is reducible \emph{and} the operator $\rKM{}{\zeta}$ is well-defined.\\

Let $V\coloneqq V_{\omega_1}(q)$ be the 2-dimensional 
representation with action given by 
\begin{gather}
	\pi(E_0) = \begin{pmatrix} 0 & 0 \\ 1 & 0 \end{pmatrix} = \pi(F_1)\,,
	\qq
	\pi(F_0) = \begin{pmatrix} 0 & 1 \\ 0 & 0 \end{pmatrix}= \pi(E_1)\,,\\
	\pi(K_0) = \begin{pmatrix} q^{-1} & 0 \\ 0 & q \end{pmatrix} =\pi(K_1)^{-1}\,,
\end{gather}
see~Remark~\ref{rmk:spectral-R-matrix}.
Let $\Uqk\subset U_q(\wh{\mathfrak{sl}}_2)$ be the q-Onsager algebra from Example \ref{exam:qOnsager} with QSP parameters
$\parc{0},\parc{1}\in\bsF^\times$, $\pars{0},\pars{1}\in\bsF$.
Since $V$ is amenable and $\tau=\id=\eta_0$, 
Theorem~\ref{thm:sol-diag-RE} applies and yields
a rational QSP intertwiner 
\[\rKM{}{z}:\shrep{V}{\kappa z}\to\shrep{V}{\kappa/z}\] 
where $\kappa^{-2}={\parc{0}\parc{1}}$ (see also~\cite[Cor.~9.2]{AV22}).
This satisfies equation \eqref{eq:bala-kolb-re},
which in this particular case reduces to the standard reflection equation \eqref{eq:rational-original-RE}.\\

Up to a scalar, $\rKM{}{z}$ can be obtained by computing
directly a polynomial non--vanishing solution of the intertwining equation \eqref{QSPintertwiner:shifted},
relying on the explicit formulas of the QSP generators \eqref{qOnsager:generators}, and by further imposing the
unitarity condition $\rKM{}{z}^{-1} = \rKM{}{z^{-1}}$ according to Proposition~\ref{prop:unitarity:case2}.
For convenience, we replace the QSP parameters by choosing $\la,\mu,\nu \in \bsF^\times$ such that 
\[
\parc{0} = \kappa^{-2} \nu^2, \qq \parc{1} = \nu^{-2},\qq \pars{0} = \kappa^{-1} \nu \frac{\mu + \mu^{-1}}{q-q^{-1}}, \qq \pars{1} = - \nu^{-1} \frac{\la + \la^{-1}}{q-q^{-1}}. 
\]
\begin{enumerate}[leftmargin=2em]\itemsep0.25cm
	\item
	If $\lambda\mu^{\pm1}\neq\pm1$, we get
	\[
	\rKM{}{z} = \frac{\lambda\mu}{(\lambda \mu + z)(\lambda+\mu z)}\;
	\begin{pmatrix} & \left(\mu + \frac{1}{\mu}\right) z + \left(\lambda+\frac{1}{\lambda}\right) z^2 & -\frac{1}{\nu} \left(z^2-1\right) &\\ &&&\\ &\nu \left(z^2-1\right) & \left(\mu + \frac{1}{\mu}\right) z  + \lambda+\frac{1}{\lambda} &\end{pmatrix}
	\vspace{0.25cm}
	\]
	In particular, $\rKM{}{\pm1}=\id$.
	
	\item
	If $\lambda\neq\pm i$ and $\lambda\mu^{\pm1}=\varepsilon$ with $\varepsilon\in\{\pm1\}$, then \\[0mm]
	\[
	\rKM{}{z} =\frac{1}{\varepsilon\lambda+\frac{1}{\lambda}z} \;
	\begin{pmatrix} & \left(\lambda + \frac{1}{\lambda}\right) z & -\frac{1}{\nu} \left(z-\varepsilon\right) &\\ &&&\\ &\nu \left(z-\varepsilon\right) & \varepsilon(\lambda+\frac{1}{\lambda}) &\end{pmatrix}
	\vspace{0.25cm}
	\]
	In particular, $\rKM{}{\varepsilon}=\id$. 
	Note however that $V(-\varepsilon \kappa)$ is reducible. 
	If $\lambda=\pm1$, $\rKM{}{z}$ has a pole at $z=-\varepsilon$, which therefore detects the reducibility of $V(-\varepsilon \kappa)$. 
	On the other hand, if $\lambda\neq\pm1$, $\rKM{}{-\varepsilon}$ is still well--defined.
	
	\item 
	Finally, if $\lambda=\pm i$ and $\lambda\mu^{\pm 1}=\varepsilon$ with $\varepsilon\in\{\pm1\}$, then \\[0mm]
	\[
	\rKM{}{z} =
	\begin{pmatrix} & 0 & -\frac{1}{\nu}&\\ &&&\\ &\nu  & 0 &\end{pmatrix}
	\vspace{0.25cm}
	\]
	In this case $\pars{0}=0=\pars{1}$ and $V(\pm \kappa)$ is reducible. 
	However, $\rKM{}{z}$ is constant and always well-defined.
\end{enumerate}


\providecommand{\bysame}{\leavevmode\hbox to3em{\hrulefill}\thinspace}
\providecommand{\MR}{\relax\ifhmode\unskip\space\fi MR }
\providecommand{\MRhref}[2]{%
	\href{http://www.ams.org/mathscinet-getitem?mr=#1}{#2}
}
\providecommand{\href}[2]{#2}



\begin{thebibliography}{JKKMW}
	
\bibitem[AK97]{AK97}
T. Akasaka, M. Kashiwara, {\em Finite-dimensional representations of quantum affine algebras}, Publ. Res. Inst. Math. Sci. {\bf 33} (1997), no.~5, 839--867. \MR{1607008}
	
\bibitem[AP22]{AP22}
	A. Appel, T. Prze\'{z}dziecki, 
	\emph{{Generalized Schur-Weyl dualities for quantum affine symmetric pairs and orientifold KLR algebras}},
	Adv. Math. {\bf 435} (2023), Paper No. 109383, 52 pp. \MR{4666131}
	
\bibitem[ATL19]{ATL19}
	A. Appel, V. Toledano~Laredo, 
	\emph{Coxeter categories and quantum groups},
	Selecta Math. (N.S.) \textbf{25} (2019), no.~3, p97. \MR{3984102}
	
\bibitem[ATL24a]{ATL24a}
	A. Appel, V. Toledano~Laredo, \emph{Monodromy of the Casimir connection of a symmetrisable Kac-Moody algebra}, Invent. Math. {\bf 236} (2024), no.~2, 549--672. \MR{4728239}

\bibitem[ATL24b]{ATL24b}	
	A. Appel and V. Toledano~Laredo, \emph{Pure braid group actions on category $\mathcal{O}$ modules}, Pure Appl. Math. Q. {\bf 20} (2024), no.~1, 29--79. \MR{4734869}
	
\bibitem[AV22]{AV22}
	A. Appel, B. Vlaar, 
	\emph{Universal {K}-matrices for quantum {K}ac--{M}oody algebras}, 
	Represent. Theory {\bf 26} (2022), 764--824. \MR{4454332}
	
%
	
%
%

\bibitem[BB10]{BB10}
	P. Baseilhac, S. Belliard, 
	\emph{Generalized q-{O}nsager algebras and boundary affine {T}oda field theories}, 
	Letters in Mathematical Physics \textbf{93} (2010), no.~3, 213--228.
	
\bibitem[BK05]{BK05}
	P. Baseilhac, K. Koizumi, 
	\emph{A new (in)finite-dimensional algebra for quantum integrable models}, 
	Nuclear Physics B \textbf{720} (2005), no.~3, 325--347.

\bibitem[BK15]{BK15}
	M. Balagovi\'{c}, S. Kolb, 
	\emph{The bar involution for quantum symmetric pairs}, 
	Representation Theory Amer. Math. Soc. \textbf{19} (2015), 186--210.

\bibitem[BK19]{BK19}
	M. Balagovi\'{c}, S. Kolb, 
	\emph{Universal {K}-matrix for quantum symmetric pairs}, 
	J. Reine Angew. Math. \textbf{747} (2019), 299--353. \MR{3905136}
	
\bibitem[Bou68]{Bou68}
	N. Bourbaki, 
	\emph{Groupes et alg\`{e}bres de {L}ie}, 
	Hermann, Paris, 1968.
	
\bibitem[Bow07]{Bow07}
	J. Bowman, 
	\emph{Irreducible modules for the quantum affine algebra ${U}_q(\fkg)$ and its {B}orel subalgebra ${U}_q(\fkg)^{\ge 0}$}, 
	Journal of Algebra \textbf{316} (2007), no.~1, 231--253.

\bibitem[BPO96]{BPO96}
	R.E. Behrend, P.A. Pearce, D.L. O'Brien, 
	\emph{Interaction-round-a-face models with fixed boundary conditions: the ABF fusion hierarchy},
	J. Stat. Phys. {\bf 84} (1996), no.~1, 1--48.

\bibitem[BS19]{BS19}
	R. Bittleston\ and\ D. Skinner, 
	\emph{Gauge theory and boundary integrability}, 
	J. High Energy Phys. {\bf 2019}, no.~5, 195, 52 pp. 
	
\bibitem[BS20]{BS20}
	R. Bittleston\ and\ D. Skinner, 
	\emph{Gauge theory and boundary integrability. Part II. Elliptic and trigonometric cases}, 
	J. High Energy Phys. {\bf 2020}, no.~6, 080, 34 pp. 
	
\bibitem[BT18]{BT18}
	P. Baseilhac, Z. Tsuboi, 
	\emph{Asymptotic representations of augmented q-{O}nsager algebra and boundary {K}-operators related to {B}axter{Q}-operators}, 
	Nuclear Physics B \textbf{929} (2018), 397--437.

\bibitem[BW18]{BW18}
	H. Bao, W. Wang, 
	\emph{A new approach to {K}azhdan-{L}usztig theory of type {$B$} via quantum symmetric pairs}, 
	Ast\'{e}risque (2018), no.~402, vii+134.\MR{3864017}

\bibitem[BW21]{BW21}
	H. Bao\ and\ W. Wang, 
	\emph{Canonical bases arising from quantum symmetric pairs of Kac-Moody type}, 
	Compos. Math. {\bf 157} (2021), no.~7, 1507--1537.
	
	
%
%
%

%


\bibitem[CG05]{CG05}
	V. Chari, J. Greenstein, 
	\emph{Filtrations and completions of certain positive level modules of affine algebras}, 
	Advances in Mathematics \textbf{194} (2005), no.~2, 296--331.

\bibitem[CGM14]{CGM14}
	H. Chen, N. Guay, X. Ma, 
	\emph{Twisted Yangians, twisted quantum loop algebras and affine Hecke algebras of type BC},
	Trans. Amer. Math. Soc. {\bf 366} (2014), 2517--2574. 

\bibitem[Cha02]{Cha02}
	V. Chari, 
	\emph{Braid group actions and tensor products}, 
	Int. Math. Res. Not. (2002), no.~7, 357--382. \MR{1883181}
	
\bibitem[Che84]{Che84}
	I. Cherednik, 
	\emph{{F}actorizing particles on a half line, and root systems}, 
	Teoret. Mat. Fiz. \textbf{61} (1984), no.~1, 35--44. \MR{774205}

\bibitem[Che92]{Che92}
	I. Cherednik, 
	\emph{Quantum {K}nizhnik-{Z}amolodchikov equations and affine root systems}, 
	Comm. Math. Phys. \textbf{150} (1992), no.~1, 109--136. \MR{1188499}

\bibitem[CP94a]{CP94a}
	V. Chari, A. Pressley, 
	\emph{Quantum affine algebras and their representations}, 
	Representations of groups (Banff, AB, 1994), CMS Conference Proceedings, vol.~16, Amer. Math. Soc., Providence, RI, 1994, pp.~59--78.
	
\bibitem[CP94b]{CP94b}
	V. Chari, A. Pressley, 
	\emph{Small representations of quantum affine algebras}, 
	Lett. Math. Phys. \textbf{30} (1994), no.~2, 131--145. \MR{1264994}
	
\bibitem[CP95]{CP95}
	V. Chari, A. Pressley, 
	\emph{A guide to quantum groups}, 
	Cambridge University Press, Cambridge, 1995, Corrected reprint of the 1994 original.

	

\bibitem[DG02]{DG02}
G. Delius, A. George, 
\emph{Quantum affine reflection algebras of type {$d^{(1)}_n$} and reflection matrices}, 
Lett. Math. Phys. \textbf{62} (2002), no.~3, 211--217. \MR{1958114}

\bibitem[DGN08]{DGN08}
	J. De Gier, A. Nichols, 
	\emph{The two-boundary Temperley–Lieb algebra}, 
	J. Alg. {\bf 321} (2009), no.~4, 1132--1167.
	
\bibitem[DGP04]{DGP04}
J. De Gier, P. Pyatov, 
\emph{Bethe ansatz for the Temperley–Lieb loop model with open boundaries},
J. Stat. Mech. Theory Exp. (2004), no. 03, P002.

\bibitem[DLMR03]{DLMR03}
	M. de Leeuw, T. Matsumoto, V. Regelskis, 
	\emph{Coideal Quantum Affine Algebra and Boundary Scattering of the Deformed Hubbard Chain}. 
	J. Phys. A {\bf 45} (2012), 065205.

\bibitem[DM03]{DM03}
	G. Delius, N.~J. MacKay, 
	\emph{Quantum group symmetry in sine-{G}ordon and affine {T}oda field theories on the half-line}, 
	Comm. Math. Phys. \textbf{233} (2003), no.~1, 173--190. \MR{1958056}
	
%
\bibitem[Doi05]{Doi05}
	A. Doikou, 
	\emph{From affine Hecke algebras to boundary symmetries}, 
	Nuclear Phys. B{\bf 725} (2005), 493--530.

\bibitem[Dri85]{Dri85}
	V. Drinfeld, 
	\emph{Hopf algebras and the quantum Yang-Baxter equation} (Russian). 
	Dokl. Akad. Nauk SSSR {\bf 283} (1985), no. 5, 1060--1064. 

\bibitem[Dri86]{Dri86}
	V. Drinfeld, 
	\emph{Quantum groups}, 
	Proceedings of the {I}nternational {C}ongress of {M}athematicians, {V}ol. 1, 2 ({B}erkeley, {C}alif., 1986), Amer. Math. Soc., Providence, RI, 1987, 798--820. \MR{934283}
	
\bibitem[DVGR94]{DVGR94}
	H. De~Vega, A. Gonz{\'a}lez-Ruiz, 
	\emph{Boundary {K}-matrices for the {XYZ}, {XXZ} and {XXX} spin chains}, 
	Journal of Physics A: Mathematical and General \textbf{27} (1994), no.~18, 6129.

	

%

		
\bibitem[EM02]{EM02}
	P. Etingof, A. Moura, 
	\emph{On the quantum {K}azhdan-{L}usztig functor}, 
	Math. Res. Lett. \textbf{9} (2002), no.~4, 449--463. \MR{1928865}


\bibitem[ES18]{ES18}
	M. Ehrig, C. Stroppel,
	\emph{Nazarov-Wenzl algebras, coideal subalgebras and categorified skew Howe duality},
	Adv. Math. {\bf 331} (2018): 58--142.

\bibitem[FHR21]{FHR21}
	E. Frenkel, D. Hernandez, N. Reshetikhin, 
	\emph{Folded quantum integrable models and deformed {W}-algebras}, 
	Lett. Math. Phys. {\bf 112} (2022), no. 4, Paper No. 80, 86 pp.

\bibitem[FM91]{FM91}
	L. Freidel, J. Maillet, 
	\emph{Quadratic algebras and integrable systems}, 
	Physics Letters B \textbf{262} (1991), no.~2, 278--284.
	
\bibitem[FNR07]{FNR07}
	L. Frappat, R. I. Nepomechie, E. Ragoucy, 
	\emph{Complete Bethe Ansatz solution of the open spin-s XXZ chain with general integrable boundary terms},
	J. Stat. Mech. (2007), P09009.

\bibitem[FR92]{FR92}
	I. Frenkel, N. Reshetikhin, 
	\emph{Quantum affine algebras and holonomic difference equations}, 
	Comm. Math. Phys. \textbf{146} (1992), no.~1, 1--60. \MR{1163666}

\bibitem[FS15]{FS15}
	R. Frassek, I. Sz\'{e}cs\'{e}nyi, 
	\emph{Q-operators for the open Heisenberg spin chain},
	Nuclear Phys. B {\bf 901} (2015), 229--248.

\bibitem[FSHY]{FSHY}
	H. Fan, K.-J. Shi, B.-Y. Hou, Z.-X. Yang,
	\emph{Integrable boundary conditions associated with the $Z_n \times Z_n$ Belavin model and solutions of the reflection equation},
	Internat. J. Modern Phys. A {\bf 12}, no. 16 (1997) 2809-2823.


	
%
	
\bibitem[GRW17]{GRW17}
	N. Guay, V. Regelskis\ and\ C. Wendlandt, 
	\emph{Representations of twisted Yangians of types B, C, D: I}, 
	Selecta Math. (N.S.) {\bf 23} (2017), no.~3, 2071--2156.

\bibitem[GTL16]{GTL16}
	S. Gautam, V. Toledano~Laredo, 
	\emph{Yangians, quantum loop algebras, and abelian difference equations}, 
	J. Amer. Math. Soc. \textbf{29} (2016), no.~3, 775--824. \MR{3486172}

\bibitem[GZ94]{GZ94}
	S. Ghoshal, A. Zamolodchikov,
	\emph{Boundary S-matrices and boundary state in two-dimensional integrable quantum field theory},
	Internat. J. Modern Phys. A \textbf{09} (1994), 3841--3886.
	
	

\bibitem[Her06]{Her06}
	D. Hernandez, 
	\emph{The Kirillov-Reshetikhin conjecture and solutions of $T$-systems}, 
	J. Reine Angew. Math. {\bf 596} (2006), 63--87. \MR{2254805}
	
\bibitem[Her19]{Her19}
	D. Hernandez, 
	\emph{Avanc\'{e}es concernant les {$R$}-matrices et leurs applications [d'apr\`es {M}aulik-{O}kounkov, {K}ang-{K}ashiwara-{K}im-{O}h,\dots ]}, 
	Ast\'erisque No. 407, Exp. No. 1129, 297--331 (2019) \MR{3939280}
	
\bibitem[HJ12]{HJ12}
	D. Hernandez, M. Jimbo, 
	\emph{Asymptotic representations and {D}rinfeld rational fractions}, 
	Comp. Math. \textbf{148} (2012), no.~5, 1593--1623.

\bibitem[HKZJ20]{HKZJ20}
	I. Halacheva, A. Knutson, P. Zinn-Justin, 
	\emph{Restricting Schubert classes to symplectic Grassmannians using self-dual puzzles}, 
	S\'em. Lothar. Combin. {\bf 82B} (2020), Art. 83, 12 pp. \MR{4098304}
	
\bibitem[IK97]{IK97}
	A. Isaev, P. P. Kulish,
	\emph{Tetrahedron reflection equations},
	Modern Phys. Lett. A, {\bf 12} (1997), 427--437.

\bibitem[IO07]{IO07}
	A. Isaev, O. Ogievetsky,
	\emph{Baxterized Solutions of Reflection Equation and Integrable Chain Models},
	Nuclear Phys. B {\bf 760} (2007), 167--183.

%
\bibitem[Jim86a]{Jim86a}
	M. Jimbo, 
	\emph{A $q$-analogue of $U(\fkgl(N+1))$, Hecke algebra, and the Yang-Baxter equation}. 
	Lett. Math. Phys. {\bf 11}, no. 3 (1986): 247--252.

\bibitem[Jim86b]{Jim86b}
	M. Jimbo, 
	\emph{{Q}uantum {R} {M}atrix for the {G}eneralized {T}oda {S}ystem},
	Comm. Math. Phys. \textbf{102} (1986), 537--547.
	
\bibitem[JKKMW]{JKKMW}
	M. Jimbo, R. Kedem, H. Konno, T. Miwa, R. Weston,
	\emph{Difference equations in a spin chain with a boundary},
	Nuclear Phys. B {\bf 448} (1995), no. 3, 429--456.	

%

\bibitem[Kac90]{Kac90}
	V. Kac, 
	\emph{Infinite-dimensional {L}ie algebras}, 
	third ed., Cambridge University Press, Cambridge, 1990.
	
\bibitem[Kas02]{Kas02}
	M. Kashiwara, 
	\emph{On level-zero representations of quantized affine algebras}, 
	Duke Math. J. \textbf{112} (2002), no.~1, 117--175. \MR{1890649}

\bibitem[KM06]{KM06}
	P. Kulish, A. Mudrov,
	\emph{Baxterization of solutions to reflection equation with Hecke R-matrix},
	Lett. Math. Phys. {\bf 75} (2006), no. 2, 151--170.
	
\bibitem[Koj11]{Koj11}
	T. Kojima, 
	\emph{Free field approach to diagonalization of boundary transfer matrix: recent advances},
	In Journal of Physics: Conference Series (Vol. 284, No. 1, p. 012041), IOP Publishing (2011).

\bibitem[Kol14]{Kol14}
	S. Kolb, 
	\emph{Quantum symmetric {K}ac-{M}oody pairs}, 
	Adv. Math. \textbf{267} (2014), 395--469. \MR{3269184}
	
\bibitem[Kol22]{Kol22}
	S. Kolb, 
	\emph{The bar involution for quantum symmetric pairs -- hidden in plain sight}, 
	Hypergeometry, integrability and Lie theory, 69--77, Contemp. Math., {\bf 780}, Amer. Math. Soc., Providence, RI (2022).
	
\bibitem[KOW24]{KOW24}
	H. Kusano, M. Okado, H. Watanabe, 
	\emph{Kirillov-Reshetikhin modules and quantum $K$-matrices},
	Comm. Math. Phys. {\bf 405} (2024), no.~4, Paper No. 88, 27 pp. \MR{4719399}

\bibitem[KOY19]{KOY19}
	A. Kuniba, M. Okado, A. Yoneyama, 
	\emph{Matrix product solution to the reflection equation associated with a coideal subalgebra of $U_q(A^{(1)}_{n-1})$},
	Lett. Math. Phys. \textbf{109} (2019), no.~9, 2049--2067.	
		
\bibitem[KP18]{KP18}
	A. Kuniba, V. Pasquier, 
	\emph{Matrix product solutions to the reflection equation from three dimensional integrability},
	J. Phys. A: Math. Theor. \textbf{51} (2018), 255204.

\bibitem[KR83]{KR83}
	P. Kulish, N. Reshetikhin,
	\emph{Quantum linear problem for the sine-Gordon equation and higher representations},
	J. Sov. Math. {\bf 23} (1983), 2435--2441.

\bibitem[KR87]{KR87}
	A. Kirillov, N. Reshetikhin, 
	\emph{Representations of Yangians and multiplicities of the inclusion of the irreducible components of the tensor product of representations of simple Lie algebras.} (Russian); 
	translated from Zap. Nauchn. Sem. Leningrad. Otdel. Mat. Inst. Steklov. (LOMI) {\bf 160} (1987), Anal. Teor. Chisel i Teor. Funktsii. 8, 211--221, 301. J. Soviet Math. {\bf 52} (1990), no.~3, 3156--3164.
	
\bibitem[KS92]{KS92}
	P. Kulish, E. Sklyanin, 
	\emph{Algebraic structures related to reflection equations}, 
	J. Phys. A: Math. Gen. \textbf{25} (1992), no.~22, 5963.
	
\bibitem[KS95]{KS95}
	D. Kazhdan, Y. Soibelman, 
	\emph{Representations of quantum affine algebras},
	Selecta Math. (N.S.) \textbf{1} (1995), no.~3, 537--595. \MR{1366624}

\bibitem[KW92]{KW92}
	V. Kac, S. Wang, 
	\emph{On automorphisms of {K}ac-{M}oody algebras and groups}, 
	Adv. Math. \textbf{92} (1992), no.~2, 129--195. \MR{1155464}


	

	
\bibitem[Let02]{Let02}
	G. Letzter, 
	\emph{Coideal subalgebras and quantum symmetric pairs}, 
	New directions in {H}opf algebras, Math. Sci. Res. Inst. Publ., vol.~43, Cambridge Univ. Press, Cambridge (2002), pp.~117--165. \MR{1913438}
	
\bibitem[LM94]{LM94}
	D. Levy, P. Martin, 
	\emph{Hecke algebra solutions to the reflection equation}.
	J. Phys. A {\bf 27} (1994), no. 14.

\bibitem[Lus94]{Lus94}
	G. Lusztig, 
	\emph{Introduction to quantum groups}, 
	Modern Birkh\"{a}user Classics, Birkh\"{a}user/Springer, New York, 2010, Reprint of the 1994 edition. \MR{2759715}

%
\bibitem[MN92]{MN92}
	L. Mezincescu, R.I. Nepomechie, 
	\emph{Fusion procedure for open chains},
	J. Phys. A: Math. Gen. {\bf 25} (1992), no. 9, 2533.

\bibitem[MN98]{MN98}
	L. Mezincescu, R.I. Nepomechie, 
	\emph{Fractional-spin integrals of motion for the boundary Sine-Gordon model at the free fermion point},
	Internat. J. Modern Phys. A {\bf 13} (1998), 2747--2764.

\bibitem[MR02]{MR02} 
	A. Molev, E. Ragoucy, 
	\emph{Representations of reflection algebras},
	Rev. Math. Phys. \textbf{14} (2002), no. 3, 317--342.
	
\bibitem[MRS03]{MRS03}	
	A. Molev, E. Ragoucy, P. Sorba,
	{\it Coideal subalgebras in quantum affine algebras}.
	Rev. Math. Phys. {\bf 15} (2003), no. 8, 789--822.

%


\bibitem[Ols90]{Ols90} 
	G.~Olshanskii, 
	\emph{Twisted Yangians and infinite-dimensional classical Lie algebras}, 
	in \emph{Quantum groups (Leningrad, 1990)}, 104--119, Lecture Notes in Math., 1510, Springer, Berlin.
	
\bibitem[RSV18]{RSV18}
	N. Reshetikhin, J. Stokman, B. Vlaar,
	\emph{Integral solutions to boundary quantum Knizhnik-Zamolodchikov equations}.
	Adv. Math. {\bf 323} (2018), pp.486-528.

\bibitem[RV16]{RV16}
	V. Regelskis, B. Vlaar, 
	\emph{Reflection matrices, coideal subalgebras and generalized {S}atake diagrams of affine type},
	\href{https://arxiv.org/abs/1602.08471}{\tt arXiv:1602.08471} (2016).
	
	
\bibitem[RV20]{RV20}
	V. Regelskis, B. Vlaar, 
	\emph{Quasitriangular coideal subalgebras of {$U_q(\mathfrak g)$} in terms of generalized {S}atake diagrams}, 
	Bull. Lond. Math. Soc. \textbf{52} (2020), no.~4, 693--715. \MR{4171396}
	
\bibitem[RV22]{RV21}
	V. Regelskis, B. Vlaar, 
	\emph{Pseudo-symmetric pairs for {K}ac-{M}oody algebras},
	Hypergeometry, integrability and Lie theory, 155–203, Contemp. Math., {\bf 780}, Amer. Math. Soc., Providence, RI (2022).
	


\bibitem[Skl88]{Skl88}
	E. Sklyanin, 
	\emph{Boundary conditions for integrable quantum systems}, 
	J. Phys. A: Math. Gen. \textbf{21} (1988), no.~10, 2375.
	
\bibitem[SV15]{SV15}
	J. Stokman, B. Vlaar,
	\emph{Koornwinder polynomials and the XXZ spin chain},
	J. Appr. Theory {\bf 197} (2015), 69--100.

	
	
\bibitem[VW20]{VW20}
	B. Vlaar, R. Weston, 
	\emph{A {Q}-operator for open spin chains {I}. {B}axter’s {TQ} relation}, 
	J. Phys. A: Math. Theor. \textbf{53} (2020), no.~24, 245205.
	
	
\bibitem[YNZ06]{YNZ06}
	W.-L. Yang, R.I. Nepomechie, Y.-Z. Zhang,
	{\it Q-operator and T--Q relation from the fusion hierarchy},
	Phys. Lett. B {\bf 633} (2006), no. 4-5, 664--670.

\bibitem[YZ06]{YZ06}
	W.\,-L. Yang and Y.\,-L. Zhang, 
	{\it $T$-$Q$ relation and exact solution for the XYZ chain with general nondiagonal boundary terms}.
	Nuclear Phys. B {\bf 744} (2006), 312-329, 


\bibitem[ZJ07]{ZJ07}
	P. Zinn-Justin, 
	\emph{Loop model with mixed boundary conditions, qKZ equation and alternating sign matrices}, 
	J. Stat. Mech. Theory Exp. (2007), no.~1, P01007.


\end{thebibliography}
\end{document}